\newtheorem{defn}{Definition}[section]
\newtheorem{thm}{Theorem}[section]
\newtheorem{lemma}{Lemma}[section]
\newtheorem{rem}{Remark}[section]
 \numberwithin{equation}{section}
 \newcounter{Romannumber}
\newcommand{\MyRoamn}[1]
{\setcounter{Romannumber}
{#1}\Roman{Romannumber}}
\def\f{\frac}
\def\rr{\mathbb R}
\def\pa{\partial}
\def\biaoti#1{
                \begin{center}
                    {\bf\LARGE #1}
                \end{center}\bigskip\bigskip
}
\begin{document}

\biaoti{Exact boundary controllability and exact boundary   synchronization for a coupled system of wave equations    with coupled Robin boundary controls }

 \vskip 0.5cm

\centerline{\bf \large Tatsien LI\footnote{ School of Mathematical Sciences, Fudan
University, 200433 Shanghai, China;  Shanghai Key Laboratory for
Contemporary Applied Mathematic;  Nonlinear Mathematical Modeling
and Methods Laboratory, dqli@fudan.edu.cn. Projet supported by the National Natural Science Foundation of China (No 11831011).}, \quad Xing LU\footnote {Corresponding Author. School of Mathematics, Southeast University, 211189  Nanjing, China, xinglu@seu.edu.cn. Projet supported by the National Natural Science Foundation of China (No 11901082), the Natural Science Foundation of Jiangsu Province (BK20190323), and the Fundamental Research Funds for the Central Universities.} \quad  Bopeng RAO\footnote {Institut de Recherche
 Math\'ematique Avanc\'ee, Universit\'e de Strasbourg, 67084
 Strasbourg, France,  bopeng.rao@math.unistra.fr.}}
 \vskip 0.4cm

\begin{abstract} 
In this paper, we consider the exact boundary controllability and the exact boundary synchronization (by groups) for a coupled system of wave equations     with coupled Robin boundary controls. Owing to the difficulty coming from the lack of regularity of the solution, we confront a bigger challenge than that in the case with Dirichlet or Neumann boundary controls. In order to overcome this difficulty, we use the regularity results of solutions to the mixed  problem with Neumann boundary conditions by Lasiecka and Triggiani (\cite{Lasiecka}) to get the regularity of   solutions to the mixed problem with coupled Robin  boundary conditions. Thus we show the exact boundary controllability of the system, and by a method of compact perturbation, we obtain  the non-exact boundary controllability of the system with fewer boundary controls  on some special domains.  Based on this, we further  study the exact boundary synchronization (by groups)   for the same  system,  the determination of the exactly synchronizable state  (by groups), as well as the necessity of the compatibility conditions of the coupling matrices.

\end{abstract}

\indent {\bf Keywords } \quad
Exact boundary controllability, exact boundary  synchronization,   coupled system of wave equations, coupled Robin boundary controls.

\indent {\bf 2000 MR Subject Classification } 
  93B05, 93B07, 93C20

\section{Introduction}\label{sec1}

Synchronization is a widespread natural phenomenon. It was first observed by Huygens in 1665 (\cite{Huygens}). The theoretical research on synchronization from the  mathematical point of view dates back to N. Wiener in 1950s (see Chapter 10 in \cite{wiener}, pp.199). Since 2012,  Li and Rao started the research on the synchronization for coupled systems governed by PDEs, and they showed that the synchronization in this case could be realized  in a finite time by  means of proper boundary controls. Consequently, the study of synchronization becomes a part of research in control theory. 
Precisely speaking, Li and Rao considered the exact boundary synchronization for a coupled system of wave equations with Dirichlet boundary controls for any given space dimensions in the framework of weak solutions (\cite{Raofr, Rao3, LiRao2}) and for the one-space-dimensional case 
in the framework of classical solutions (\cite{hulirao, Hu, Lu0}). Corresponding results were expanded to the exact boundary synchronization by $p(\geq 1)$ groups (\cite{Rao2, Rao6}). Moreover, 
Li and Rao proposed  the concept of approximate boundary null controllability and approximate boundary synchronization in \cite{Li4} and \cite{Raokalman} and further studied them.

Throughout this paper,  $\Omega\subset  \mathbb{R}^n $ is  a bounded domain with smooth boundary $\Gamma $ or a parallelepiped.
In the first situation, we  assume that $\Omega$ satisfies the usual   multiplier geometrical  condition (\cite{Lions3}). Without loss of generality, assume that there exists an $x_0\in \mathbb{R}^n$, such that by setting $m=x-x_0$,  we have
\begin{align}\label{geo}
(m, \nu)>0, \quad  \forall x\in \Gamma, \end{align}
where $\nu$ is the unit outward normal vector on the boundary, and $(\cdot, \cdot)$ denotes the inner product in $\mathbb{R}^n $.

We define
\begin{align} \label{hdef} {\mathcal H}_0=\Big \{u: ~u \in L^2   (\Omega), \int_{\Omega}udx=0 \Big\}, \quad {\mathcal H}_1 =  H^1   (\Omega)\cap{\mathcal H}_0. \end{align}

Inspired by the synchronization of the system with Dirichlet boundary controls, Li, Lu and Rao studied the null controllability and synchronization for the
following coupled system of wave equations with $\text{Neumann}$ boundary controls on a bounded domain $\Omega\subset  \mathbb{R}^n $  with smooth boundary:
\begin{align}\label{Bn}\begin{cases}
U''-{\Delta} U+AU=0  & \hbox{in} \quad (0,+\infty) \times \Omega,\\
\partial_\nu U = DH  & \hbox{on} \quad  (0,+\infty) \times \Gamma,  \end{cases}
\end{align}
where,
 the coupling matrix
$A=(a_{ij})$ is of order $N$, the boundary control matrix $D$ is an $N\times M ~(M\leq N)$ full column-rank matrix, namely, rank$(D)=M$, and both $A$  and $D$ have real constant elements, $U = (u^{(1)},\cdots, u^{(N)} )^T$ and $H= (h^{(1)} ,\cdots, h^{(M)} )^T$ denote the state variables  and the boundary controls, respectively. The discussion on the control problem will become more flexible because of the introduction of the boundary control matrix $D$. Moreover,   $\partial_\nu$ denotes the outward normal derivative on the boundary.

\begin{rem}
Corresponding results on the exact boundary synchronization and the approximate boundary synchronization  obtained in \cite{lu2}, \cite{lu3} and \cite{Raoneu} were originally presented for the following system
\begin{align}
\begin{cases}
U''-{\Delta} U+AU=0  & \hbox{in} \quad (0,+\infty) \times \Omega,\\
U= 0  & \hbox{on} \quad  (0,+\infty) \times \Gamma_0,\cr
\partial_\nu U = DH  & \hbox{on} \quad  (0,+\infty) \times \Gamma_1  \end{cases}
\end{align}
on a bounded domain $\Omega\subset  \mathbb{R}^n $ with smooth boundary   $\Gamma = \Gamma_1\cup\Gamma_0$ with $\overline\Gamma_1\cap\overline\Gamma_0=\emptyset $ and mes$(\Gamma_1) \not = 0$, where  mes$(\cdot)$ stands for the Lebesgue's surface measure on  $\Gamma$. However, using the basic spaces defined by \eqref{hdef}, all those results can be obtained for system \eqref{Bn}.
\end{rem}


 We have

\begin{lemma}\label{controllableneu}
Assume that $\Omega\subset  \mathbb{R}^n $ is  a smooth bounded domain. 
Assume furthermore that  $M=\text{rank}(D)=N$. Then there exists a $T>0$, for any given initial data $(\widehat{U}_0, \widehat{U}_1)\in ({\cal H}_1)^N\times  ({\cal H}_0)^N$, there exists a boundary control  $H \in L^2(0, T;
(L^2(\Gamma_1))^N)$, 
such that the corresponding solution $U=U(t,x)$ to system \eqref{Bn} satisfies
\begin{align}
t \geq T: \quad U(t,x) \equiv 0, ~~x \in \Omega,
\end{align}
namely, system \eqref{Bn} is exactly null controllable at the time $T$. \end{lemma}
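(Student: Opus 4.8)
The plan is to prove the exact null controllability by the Hilbert Uniqueness Method (HUM), turning the control problem into an observability estimate for the adjoint system. Since $M=N$ and $\mathrm{rank}(D)=N$, the matrix $D$ is invertible, so the change of control $\widetilde H=DH$ is an isomorphism of $\big(L^2(\Gamma)\big)^N$; it therefore suffices to treat the case $D=I_N$, in which each component of $U$ carries its own Neumann control. Integrating by parts in the pairing between a solution $U$ of \eqref{Bn} and a solution $\Phi$ of the homogeneous adjoint problem
\begin{align}\label{adj}
\begin{cases}
\Phi''-\Delta\Phi+A^T\Phi=0 & \text{in }(0,T)\times\Omega,\\
\partial_\nu\Phi=0 & \text{on }(0,T)\times\Gamma,
\end{cases}
\end{align}
the control $H$ couples only to the Dirichlet trace $\Phi|_\Gamma$. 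Consequently, exact null controllability at time $T$ in $(\mathcal H_1)^N\times(\mathcal H_0)^N$ reduces to the observability inequality
\begin{align}\label{obs}
c\,\big\|(\Phi_0,\Phi_1)\big\|_{(\mathcal H_1)^N\times(\mathcal H_0)^N}^2\le\int_0^T\!\!\int_\Gamma|\Phi|^2\,d\Gamma\,dt
\end{align}
for some $T>0$ and $c>0$, where $(\Phi_0,\Phi_1)$ denotes the data of \eqref{adj}. Working in the zero-mean spaces $\mathcal H_0,\mathcal H_1$ is precisely what removes the constant kernel of the Neumann Laplacian and restores coercivity of the energy.

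I would then establish the two opposite estimates that together yield \eqref{obs}. The direct (hidden-regularity) inequality $\int_0^T\!\int_\Gamma|\Phi|^2\,d\Gamma\,dt\le C\|(\Phi_0,\Phi_1)\|^2$ is the delicate point emphasized in the abstract: for a Neumann boundary condition the Dirichlet trace of a finite-energy solution is not governed by the basic energy identity, and the naive trace theorem would demand more regularity than is available. Here I would invoke the sharp boundary regularity of Lasiecka and Triggiani (\cite{Lasiecka}) for the mixed problem with Neumann data, applied to each component and treating the bounded zeroth-order coupling $A^T\Phi$ as an interior forcing term, to conclude that $\Phi|_\Gamma\in\big(L^2((0,T)\times\Gamma)\big)^N$ depends continuously on the energy of the data; this is what makes the observation map, and hence the HUM control, well defined in $L^2$.

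For the reverse (observability) inequality I would first dispose of the decoupled case $A=0$, where \eqref{adj} splits into $N$ independent wave equations with homogeneous Neumann condition; the classical multiplier method of Lions (\cite{Lions3}), using the radial field $m=x-x_0$ together with the geometric hypothesis \eqref{geo}, then yields \eqref{obs} once $T$ is large enough. To restore the coupling I would regard $A^T\Phi$ as a compact (zeroth-order) perturbation and run a compactness--uniqueness argument: the decoupled estimate survives the perturbation provided the only solution of \eqref{adj} whose Dirichlet trace vanishes on $(0,T)\times\Gamma$ is $\Phi\equiv0$. I expect this unique continuation property to be the main obstacle, since such a $\Phi$ would carry the overdetermined boundary data $\partial_\nu\Phi=\Phi=0$ on $\Gamma$; I would settle it by a Holmgren-type argument for the constant-coefficient coupled operator $-\Delta+A^T$, taking care that $A$ need be neither symmetric nor diagonalizable. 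Once \eqref{obs} is secured, HUM produces the minimal-norm dual solution, and the required control is recovered (modulo the reduction above) as $H=D^{-1}\big(\Phi|_\Gamma\big)\in L^2(0,T;(L^2(\Gamma))^N)$, which completes the proof.
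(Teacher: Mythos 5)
Your overall plan (reduction to $D=I_N$ by invertibility, HUM, the Lasiecka--Triggiani hidden regularity for the direct inequality, multipliers plus compactness--uniqueness and Holmgren for the coupling $A$) is the strategy of the literature on which the paper leans: note that the paper itself gives \emph{no} proof of Lemma \ref{controllableneu} --- it is quoted from \cite{Raoneu} (adapted to the spaces \eqref{hdef}), and the method there is exactly of this HUM/multiplier type. However, your argument as written has a genuine gap at its core: the observability inequality you reduce the lemma to is stated at the wrong regularity level, and in that form it is false, so the proof cannot be completed.

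The issue is the duality bookkeeping. For null controllability of data $(\widehat U_0,\widehat U_1)\in(\mathcal H_1)^N\times(\mathcal H_0)^N$, the transposition identity pairs $\widehat U_1$ with $\Phi(0)$ and $\widehat U_0$ with $\Phi'(0)$; since $\widehat U_0\in(\mathcal H_1)^N$, this functional of the adjoint data is continuous in the norm of $(\mathcal H_0)^N\times(\mathcal H_{-1})^N$ --- one derivative \emph{below} the energy level, because the symplectic pairing exchanges $\mathcal H_1\leftrightarrow\mathcal H_{-1}$. Hence what HUM requires here is
\[
\big\|(\Phi(0),\Phi'(0))\big\|^2_{(\mathcal H_0)^N\times(\mathcal H_{-1})^N}\;\leq\; C\int_0^T\!\!\int_\Gamma|\Phi|^2\,d\Gamma\,dt ,
\]
whereas you demand this bound with the energy norm $(\mathcal H_1)^N\times(\mathcal H_0)^N$ on the left. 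That stronger inequality fails already for $N=1$, $A=0$: take $\Phi_0=e_k$ a Neumann eigenfunction with eigenvalue $\lambda_k\to\infty$ and $\Phi_1=0$, so that $\Phi=\cos(\sqrt{\lambda_k}\,t)\,e_k$. By the trace interpolation inequality recalled in Section 3 (from \cite{Liu}), $\int_\Gamma|e_k|^2\,d\Gamma\leq c\|e_k\|_{H^1(\Omega)}\|e_k\|_{L^2(\Omega)}\leq c\,(1+\lambda_k)^{1/2}$, so your right-hand side is $O\big(T(1+\lambda_k)^{1/2}\big)$ while your left-hand side is $\asymp 1+\lambda_k$; the inequality fails for large $k$, for every $T$. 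This is not a technicality: your inequality would amount to exact controllability of the whole transposition space $(\mathcal H_0)^N\times(\mathcal H_{-1})^N$ by $L^2$ Neumann controls, which is precisely what fails for Neumann problems and is the reason the lemma is formulated in the smaller space $(\mathcal H_1)^N\times(\mathcal H_0)^N$. The repair is standard but essential: prove the weak-level inequality by passing to the time primitive $\psi(t)=\int_0^t\Phi(s)\,ds+\chi$, where $\chi\in(\mathcal H_1)^N$ solves the Neumann problem $\Delta\chi=\Phi'(0)$, $\partial_\nu\chi=0$ (solvable thanks to the zero-mean setting); then $\psi$ is a finite-energy adjoint solution with $\psi'=\Phi$ and energy $\asymp\|(\Phi(0),\Phi'(0))\|^2_{(\mathcal H_0)^N\times(\mathcal H_{-1})^N}$, and the multiplier $m=x-x_0$ with \eqref{geo} yields the estimate, the tangential-gradient boundary term now entering with a sign that allows it to be discarded (the coupling contributes the harmless source $A^T\chi$ and lower-order terms, removed by the compactness--uniqueness/Holmgren step you already envisage). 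With this correction, the rest of your outline goes through and is essentially the argument of \cite{Raoneu}.
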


\begin{rem}\label{1.1}
By  the method given in \cite{Raoneu}, the boundary control    $H$ can be chosen to continuously depend on the initial data:
\begin{align}
\|H\|_{L^2(0, T, (L^2(\Gamma_1))^{N})}\leq c\| (\widehat{U}_0, \widehat{U}_1)\|_{({\cal H}_1)^N \times ( {\cal H}_0)^N},\end{align}
here and hereafter, $c$ is a positive constant independent of the initial data.
\end{rem}

On the other hand,  when there is a lack of boundary controls, we have
\begin{lemma}\label{uncontrollableneu}
Assume that $\Omega\subset  \mathbb{R}^n $ is  a smooth bounded domain. 
When $M=\text{rank}(D)<N$, no matter how large $T>0$ is, system \eqref{Bn} is not exactly controllable at the time $T$ in the space $({\cal H}_1)^N \times  ({\cal H}_0)^N$. 
\end{lemma}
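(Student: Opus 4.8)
The plan is to pass to the adjoint problem and to show, by the method of compact perturbation, that the observability inequality characterizing exact controllability must fail as soon as $\mathrm{rank}(D)<N$. By the Hilbert Uniqueness Method, exact controllability of \eqref{Bn} in $(\mathcal H_1)^N\times(\mathcal H_0)^N$ at the time $T$ is equivalent to an observability estimate of the form
\begin{align*}
c\,\|(\Phi_0,\Phi_1)\|^2 \le \int_0^T\!\!\int_\Gamma |D^T\Phi|^2\,d\Gamma\,dt,
\end{align*}
valid for every solution $\Phi$ of the adjoint system
\begin{align*}
\Phi''-\Delta\Phi+A^T\Phi=0 \ \text{in}\ (0,T)\times\Omega, \qquad \pa_\nu\Phi=0 \ \text{on}\ (0,T)\times\Gamma,
\end{align*}
the observation being the boundary trace $D^T\Phi$. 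Thus it suffices to exhibit a nontrivial family of adjoint solutions along which the right-hand side vanishes (or degenerates to zero after normalization), which contradicts the estimate for every $c>0$ and hence rules out exact controllability.

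Since $\mathrm{rank}(D)=M<N$, the kernel $\ker(D^T)$ is nontrivial; I would fix $E\in\ker(D^T)\setminus\{0\}$, so that $D^TE=0$. Expanding the adjoint state in the Neumann eigenbasis $\{e_k\}$ of $-\Delta$ (with $-\Delta e_k=\lambda_k e_k$ and $\pa_\nu e_k=0$ on $\Gamma$) reduces the PDE to the vector ODEs $v_k''+(\lambda_k I+A^T)v_k=0$ for the coefficients $v_k(t)\in\rr^N$, each such choice furnishing a solution that automatically meets the homogeneous Neumann condition. In the uncoupled case $A=0$ the observation $D^T\Phi$ vanishes on the whole infinite-dimensional family $\Phi=E\psi$, where $\psi$ runs over solutions of the scalar Neumann wave equation, since then $D^T\Phi=(D^TE)\psi\equiv0$. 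Hence for $A=0$ the observability estimate fails precisely in the uncontrolled $E$-direction.

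To handle the genuine coupling I would treat the term $A^T\Phi$ as a compact perturbation of the principal wave operator: relative to the energy spaces it is a zeroth-order term, compact by the Rellich embedding. Assuming, for contradiction, that the observability estimate holds, I would take a normalized sequence $\Phi^n$ of solutions of the \emph{full} adjoint system whose data approximate the unobservable $E$-direction of the principal part, extract a weakly convergent subsequence, and use the compactness of the coupling to pass to the limit. The compact term then upgrades weak convergence to strong convergence of the relevant quantities, producing a nonzero limit $\Phi$ with $D^T\Phi\equiv0$ on $(0,T)\times\Gamma$; by the assumed estimate this forces $\Phi\equiv0$, a contradiction. Equivalently, the reachable set of \eqref{Bn} stays confined to a proper closed subspace of $(\mathcal H_1)^N\times(\mathcal H_0)^N$, so the system is not exactly controllable for any $T>0$.

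The hard part will be exactly this last step: showing that the non-observability of the $E$-direction carried by the principal part is not repaired by the compact coupling $A^T$, i.e.\ that after absorbing the compact term the weak observability still degenerates and a nontrivial unobservable limit survives. This is where the structure of the problem enters, and on the special domains it is made effective through the explicit spectral (d'Alembert-type) representation of the Neumann eigenfunctions, which lets one control the boundary traces $D^Tv_k(t)\,e_k|_\Gamma$ mode by mode. Everything else — the HUM duality, the eigenfunction expansion, and the compactness of the zeroth-order term — is routine.
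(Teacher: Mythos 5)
Your plan correctly identifies the two right ingredients (duality and compactness of the zeroth-order coupling), but the step you yourself flag as ``the hard part'' is not just hard --- as sketched, it cannot work. You propose to extract, from a normalized sequence of adjoint solutions, a \emph{nonzero} weak limit $\Phi$ with $D^T\Phi\equiv 0$ on $(0,T)\times\Gamma$ and then contradict the observability estimate. First, the natural normalized sequences (data concentrating in the $E$-direction at higher and higher frequencies) converge weakly to \emph{zero}; weak limits do not retain the normalization, and the usual device for recovering a nonzero limit --- applying the observability inequality of the unperturbed system along the sequence --- is unavailable here, precisely because the unperturbed ($A=0$) system is the one that is unobservable in the $E$-direction. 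Second, and more fundamentally, a nonzero solution of the \emph{coupled} adjoint system that is invisible from the observation $D^T\Phi|_\Sigma$ need not exist at all: under Kalman-type rank conditions (see \cite{Raokalman}) the coupled system satisfies unique continuation, so $D^T\Phi\equiv0$ on $\Sigma$ forces $\Phi\equiv0$, and yet exact observability still fails. The failure that must be exhibited is quantitative (the observation map is injective but not bounded below), not qualitative (a nontrivial kernel). Your closing remark that ``the reachable set stays confined to a proper closed subspace'' commits the same error: when approximate controllability holds, the reachable set is dense, so it lies in no proper closed subspace.

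For comparison: the paper states Lemma \ref{uncontrollableneu} without proof (it is recalled from \cite{lu2, Raoneu}), but its method is displayed in the Robin analogue, Lemma \ref{Rnonbis} and Theorem \ref{uncontrollable}, and it works on the \emph{direct} problem, avoiding the trap above entirely. Assume exact controllability; take the initial data \eqref{speini} with $D^Te=0$; then $w=(e,U)$ solves the scalar Neumann problem \eqref{c5} with source ${\cal L}\theta=-(e,AU)$ and satisfies the final condition \eqref{c6}; pairing with the free scalar Neumann solution $\phi$ with data $(\theta,0)$ gives the identity $\|\theta\|^2_{L^2(\Omega)}=\int_0^T\!\int_\Omega {\cal L}\theta\,\phi\,dx\,dt$, hence $\|\theta\|_{L^2(\Omega)}\le c\,\|{\cal L}\theta\|_{L^2(0,T;L^2(\Omega))}$ for \emph{all} $\theta$, which is impossible for a compact operator ${\cal L}$ (compactness follows from continuity of $\theta\mapsto H$, the regularity of $(\theta,H)\mapsto U$, and Aubin--Lions). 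Note that no invisible solution appears anywhere: the contradiction is read off the inequality along a weakly-null normalized sequence. Your adjoint route can be repaired in exactly the same spirit --- take data $(E\theta_n,0)$ with $\theta_n$ normalized and weakly null, write $\Phi^n=E\psi_n+R_n$ so that $D^T\Phi^n|_\Sigma=D^TR_n|_\Sigma$, and prove that $\theta\mapsto D^TR|_\Sigma$ is compact (energy estimates give $R$ bounded in $C^0([0,T];H^1(\Omega))\cap C^1([0,T];L^2(\Omega))$, then Simon's compactness for the traces), so the right-hand side of the observability inequality tends to zero while the left-hand side stays bounded below --- but it is this quantitative sequence argument, not a ``nontrivial unobservable limit'', that has to be written down, and it is the piece your proposal leaves missing.
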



The study on the synchronization will be more difficult with  more complicated boundary conditions. In this paper we will consider  a  coupled system of wave equations with coupled $\text{Robin}$ boundary controls as follows:
\begin{align}\label{B}\begin{cases} U''-{\Delta} U+AU=0 &  \hbox{in} \quad   (0,+\infty) \times \Omega,\\
\partial_\nu U + BU = DH  & \hbox{on} \quad   (0,+\infty) \times \Gamma  \end{cases}
\end{align}
with the corresponding initial condition
\begin{align}\label{Bin}
t=0:\quad U=\widehat{U}_0,\quad U'=\widehat{U}_1 \qquad  \hbox{in} \quad  \Omega,
\end{align}where $B=(b_{ij})$ is the boundary coupling matrix of order $N$ with constant elements.

To study the exact boundary controllability and the exact boundary synchronization for a coupled system of wave equations with coupled $\text{Robin}$ boundary controls,  most of  difficulties come from the complicated form of boundary conditions. To deeply study the non-exact boundary controllability and the necessity of the conditions of $C_p$-compatibility for the exact synchronization by $p$-groups,
we have to further study the regularity  of solutions to Robin problem, then these problems  can be obtained on some special domains by a method of compact perturbation, based on the improved regularity results.

\section{Regularity of solutions with Neumann boundary conditions}\label{sec2}
Similarly to the  problem of wave equations with Neumann boundary conditions, a problem with Robin boundary conditions  no longer enjoys the hidden regularity as in the case  with  Dirichlet boundary conditions. As a result, 
the solution to  problem \eqref{B}--\eqref{Bin} with Robin boundary conditions  is not smooth enough in general  for the proof of the non-exact boundary controllability of the system. In order to overcome this difficulty, we should  deeply study the regularity of solutions to wave equations with Neumann boundary conditions. For this purpose, we will review some existing results with Neumann boundary conditions.

 Consider the following second order hyperbolic problem
on a  bounded domain $\Omega \subset \mathbb{R}^n$ with  boundary $\Gamma$: \begin{align}\label{las}\begin{cases} y_{tt}+A(x, \pa)y = f   & \hbox{in} \quad (0,T) \times \Omega,\\
\f{\pa y}{\partial \nu_A}  = g  & \text{on} \quad  (0,T) \times \Gamma, \\
t=0:~~y=y_0,\quad y_t=y_1 & \text{in} \quad   \Omega,  \end{cases}\end{align}
where
\begin{align}\label{oper}
A(x, \pa)=-\sum_{i,j=1}^n a_{ij}(x)\f{\pa^2}{\pa x_i \pa x_j} +\sum_{i=1}^n b_i(x)\f{\pa}{\pa x_i} +c_0(x),
\end{align} 
in which $a_{ij}(x)$  with $a_{ij}(x)=a_{ji}(x)$, $b_i(x)$ and $c_0(x)$ are  smooth real coefficients, and the principal part of $A(x, \pa)$ is supposed to be uniformly strong elliptic in $\Omega$:
\begin{align}
\sum_{i,j=1}^n a_{ij}(x)\eta_i\eta_j \geq c \sum_{i=1}^n \eta_i^2   \end{align}
for any given $x \in \Omega$ and for any given $\eta=(\eta_1, \cdots, \eta_n )\in \rr^n$, where $ c>0$ is a positive constant; moreover, $\f{\pa y}{\partial \nu_A}$ is the outward normal derivative associated  with $A$:
\begin{align}
\f{\pa y}{\partial \nu_A}=   \sum_{i=1}^N \sum_{j=1}^N a_{ij}(x) \f{\pa{y}}{\pa x_i} \nu_j
,\end{align}
  $\nu=(\nu_1,\cdots, \nu_n)^T$ being the unit outward normal vector  on the boundary $\Gamma$.

Define the operator $\cal A$  by
\begin{align}\label{operator}{\cal A} =A(x, \pa),\quad  {\cal D}({\cal A})=\big\{ y\in H^2(\Omega): \f{\pa y}{\partial \nu_A}=0~\text{on}~\Gamma  \big\}. \end{align}

%
%

In \cite{Lasiecka},  Lasiecka and Triggiani got the optimal regularity for the solution to   problem \eqref{las} 
by means of the theory of cosine operator for $\Omega \subset \mathbb{R}^n (n \geq 2)$. 
On the other hand, when $n=1$, better results can be obtained (see Theorems 3.1--3.3 and Remarks 3.1 in \cite{Lasiecka3}). Moreover, more regularity  results  can be proved when  the domain is  a parallelepiped. For conciseness and clarity, we list only those results which are needed in this paper. 

Let $\epsilon >0$ be an arbitrarily given small number. Here and hereafter, we always assume that $\alpha,\beta $ are given, respectively,  as follows: 
\begin{align}\label{alfarobin}
\begin{cases}\alpha=3/5-\epsilon,~\beta=3/5, & \text{$\Omega$ is  a   smooth  bounded domain}   \\& \text{and}~A(x, \pa)~\text{is defined by \eqref{oper}}; \\
\alpha=\beta=3/4-\epsilon, & \text{$\Omega$ is a parallelepiped  and 
$A(x, \pa)=-\Delta$}.
\end{cases} \end{align}

%
\begin{lemma}
\label{Smoothneu}
Let $\Omega\subset  \mathbb{R}^n $ be a bounded domain with   boundary $\Gamma$. 
Assume that $y_0 \equiv y_1 \equiv 0$ and  $f \equiv 0$. 
For any given $g\in L^2(0, T;L^2(\Gamma))$,  the unique solution $y$ to   problem \eqref{las}
 satisfies
\begin{equation}(y, y')\in C^0([0, T];  H^{\alpha}(\Omega)\times  H^{\alpha-1}(\Omega)) \label{c31}\end{equation}
and
\begin{equation}y|_{\Sigma}\in H^{2\alpha-1}(\Sigma)= L^2(0,T;H^{2\alpha-1}(\Gamma))\cap H^{2\alpha-1}(0,T;L^2(\Gamma)),\label{c41}\end{equation}
where $H^{\alpha}(\Omega)$ denotes the usual Sobolev space of order $\alpha$ and $\Sigma=(0,T) \times \Gamma$.
\end{lemma}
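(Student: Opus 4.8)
The plan is to recast problem \eqref{las} with inhomogeneous Neumann data as an abstract second-order evolution equation driven by the boundary datum, and then to transcribe the sharp cosine-operator regularity theory of \cite{Lasiecka}. Since $y_0=y_1=0$ and $f\equiv 0$, I would first introduce the Neumann (Green) lifting map $N$ associated with $\mathcal{A}$, defined by $\mathcal{A}Ng=0$ in $\Omega$ together with $\frac{\partial (Ng)}{\partial \nu_A}=g$ on $\Gamma$, after the standard coercive shift $\mathcal{A}\mapsto\mathcal{A}+\lambda I$ that removes the constants from the kernel and makes $\mathcal{A}$ boundedly invertible. Elliptic regularity gives $N\in\mathcal{L}(L^2(\Gamma),H^{3/2}(\Omega))$, equivalently $\mathcal{A}^\theta N\in\mathcal{L}(L^2(\Gamma),L^2(\Omega))$ for every $\theta<3/4$, since $\mathcal{D}(\mathcal{A}^\theta)=H^{2\theta}(\Omega)$ when $2\theta<3/2$. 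Writing the solution through the Duhamel formula $y(t)=-\int_0^t S(t-s)\,\mathcal{A}Ng(s)\,ds$, where $S(t)$ and $C(t)$ are the sine and cosine operators generated by $\mathcal{A}$, reduces everything to mapping properties of these operators composed with $\mathcal{A}^\theta N$.

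Next I would establish the interior regularity \eqref{c31} by estimating $\mathcal{A}^{\alpha/2}y(t)$ in $L^2(\Omega)$ uniformly in $t$. Splitting $\mathcal{A}N=\mathcal{A}^{1-\theta}(\mathcal{A}^\theta N)$ and distributing the excess power $\mathcal{A}^{1-\theta}$ against the smoothing of the oscillatory sine operator, one sees that the gain in spatial regularity cannot be the full $3/2$ of the Neumann map: the $L^2$-in-time datum together with the oscillation forces a trade-off, and the optimization carried out in \cite{Lasiecka} yields exactly the sharp exponent $\alpha=3/5-\epsilon$ for a general smooth domain. I would note that the lower-order terms $\sum_i b_i\partial_i+c_0$ in \eqref{oper} constitute a relatively compact perturbation that does not alter this exponent, and I would upgrade the estimate from $L^\infty$ to $C^0$ in time using the strong continuity of $t\mapsto S(t),C(t)$ and a density argument in $g$.

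For the boundary trace \eqref{c41}, the naive trace theorem applied to \eqref{c31} yields only $y(t)|_\Gamma\in H^{\alpha-1/2}(\Gamma)=H^{1/10}(\Gamma)$, which falls short of the claimed $H^{2\alpha-1}(\Gamma)=H^{1/5}(\Gamma)$; the extra regularity is a genuine hidden-regularity gain. I would therefore read \eqref{c41} off the sharp trace estimates of \cite{Lasiecka}, obtained there by a combination of operator-theoretic and pseudodifferential methods, and recover the anisotropic structure $L^2(0,T;H^{2\alpha-1}(\Gamma))\cap H^{2\alpha-1}(0,T;L^2(\Gamma))$ by trading spatial for temporal smoothness through the equation $y_{tt}=-A(x,\partial)y$. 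I expect this sharp trace regularity to be the main obstacle, precisely because the Neumann problem lacks the full hidden regularity of the Dirichlet case, so that only the reduced gain $2\alpha-1=1/5$ is available.

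Finally, for the parallelepiped with $A(x,\partial)=-\Delta$, I would abandon the abstract machinery in favor of separation of variables. Expanding both $g$ and $y$ in the explicit Neumann eigenbasis of products of cosines, the problem decouples into countably many one-dimensional wave equations whose boundary-to-solution maps can be analyzed term by term; the eigenvalues, spectral gaps, and traces of the eigenfunctions on the flat faces are all explicit. Summing the resulting estimates improves the exponent to $\alpha=\beta=3/4-\epsilon$, by the same mechanism through which the one-dimensional results of \cite{Lasiecka3} surpass the general multidimensional bound. Here the flat geometry is essential, since it is what lets the explicit trigonometric traces replace the microlocal analysis used in the smooth case.
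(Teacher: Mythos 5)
Your proposal is correct and takes essentially the same route as the paper: the paper offers no proof of this lemma at all, but quotes it directly from Lasiecka--Triggiani's cosine-operator regularity theory (\cite{Lasiecka} for smooth domains, and eigenfunction expansions as in Theorem 6.1N.1 of \cite{Lasiecka2} for the parallelepiped, yielding $\alpha=3/4-\epsilon$). Your sketch of the Neumann lifting map, the Duhamel/sine-operator representation, the sharp-exponent trade-off, and the separation of variables on the parallelepiped is precisely the machinery behind those citations, with the hard quantitative steps deferred---exactly as in the paper---to the cited works.
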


\begin{lemma} 
\label{Smoothneu2}
Under the assumption on $\Omega$ given in Lemma \ref{Smoothneu}, assume that $y_0 \equiv y_1 \equiv 0$ and $g \equiv 0$. 
For any given $f \in L^2(0, T;L^2(\Omega))$, the unique solution $y$ to   problem \eqref{las} 
 satisfies
\begin{equation}(y, y')\in C^0([0, T];  H^{1}(\Omega)\times  L^2(\Omega))\label{c311}\end{equation}
and
\begin{equation}y|_{\Sigma}\in H^{\beta}(\Sigma).\label{c411}\end{equation}
\end{lemma}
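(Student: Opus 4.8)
The plan is to treat the two assertions separately: the interior energy regularity \eqref{c311} will follow from the classical energy estimate (equivalently, from the cosine-operator representation of the solution), whereas the boundary trace regularity \eqref{c411} is the genuinely delicate point and rests on the sharp (hidden) trace estimates of \cite{Lasiecka}, supplemented by an explicit computation in the parallelepiped case. First I would establish \eqref{c311}, which is the classical energy level for a wave equation with an $L^2$ interior source. Multiplying the equation in \eqref{las} by $y'$, integrating over $\Omega$ and using $\f{\pa y}{\pa\nu_A}=0$, the uniformly strongly elliptic principal part produces an elastic energy comparable to $\|y\|_{H^1(\Omega)}^2$, while the lower-order coefficients $b_i,c_0$ and the source $f$ are absorbed by Gronwall's inequality; together with $y_0\equiv y_1\equiv 0$ this yields $(y,y')\in C^0([0,T];H^1(\Omega)\times L^2(\Omega))$. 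Equivalently, in the cosine-operator framework of \cite{Lasiecka}, since $g\equiv 0$ and the initial data vanish, Duhamel's formula gives
\begin{align}
y(t)=\int_0^t S(t-s)f(s)\,ds, \label{duh}
\end{align}
where $S(t)$ is the sine operator generated by ${\cal A}$ in \eqref{operator}; as $S(t)$ maps $L^2(\Omega)$ into $H^1(\Omega)$ uniformly on $[0,T]$ and $f\in L^1(0,T;L^2(\Omega))$, a standard convolution estimate gives the same conclusion, with $y'(t)=\int_0^t C(t-s)f(s)\,ds\in C^0([0,T];L^2(\Omega))$.

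The main obstacle is \eqref{c411}. Applying the trace theorem to \eqref{c311} would only give $y|_{\Sigma}\in L^2(0,T;H^{1/2}(\Gamma))$, i.e.\ spatial regularity $1/2$, whereas $H^{\beta}(\Sigma)$ with $\beta>1/2$ demands strictly more; hence the energy estimate alone is insufficient and one must exploit the hyperbolic sharp trace regularity. For the smooth domain I would invoke the microlocal trace analysis of \cite{Lasiecka}: inserting \eqref{duh} into the boundary trace and estimating $\gamma_0 S(t-s)f(s)$ by their abstract trace bounds for the cosine family yields $\|y|_{\Sigma}\|_{H^{\beta}(\Sigma)}\le c\|f\|_{L^2(0,T;L^2(\Omega))}$ with $\beta=3/5$. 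Structurally, one may also argue by transposition against the boundary-input problem: multiplying \eqref{las} by the solution $\phi$ of the adjoint problem $\phi_{tt}+A^*(x,\pa)\phi=0$, $\f{\pa\phi}{\pa\nu_{A^*}}=\psi$, $\phi(T)=\phi'(T)=0$, and integrating by parts over $(0,T)\times\Omega$ reduces the pairing $\int_{\Sigma}(y|_{\Sigma})\psi\,d\Sigma$ to $\int_0^T\!\!\int_\Omega f\phi$, so that \eqref{c411} would follow from $\|\phi\|_{L^2((0,T)\times\Omega)}\le c\|\psi\|_{H^{-\beta}(\Sigma)}$, i.e.\ from the interior regularity of the time-reversed Neumann-input problem of Lemma \ref{Smoothneu} for rough boundary data $\psi\in H^{-\beta}(\Sigma)$. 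I emphasize that this is only a \emph{reduction}: Lemma \ref{Smoothneu} is stated for $\psi\in L^2(\Sigma)$, so the nontrivial analytic input is precisely the sharp interior regularity for sub-$L^2$ boundary data, which is again supplied by \cite{Lasiecka}.

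For the parallelepiped, where $A(x,\pa)=-\Delta$ and the lower-order terms are absent, I would bypass the microlocal machinery and compute directly. Expanding $f$ and $y$ in the Neumann eigenbasis of $-\Delta$ on the box (tensor products of cosines), the representation \eqref{duh} decouples into scalar Duhamel integrals, and the boundary trace can be summed term by term; the improved exponent $\beta=3/4-\epsilon$ then comes out of the two-sided bounds on the Neumann eigenvalues and the explicit $O(1)$ size of the eigenfunction traces on each face, with the loss of $\epsilon$ reflecting the borderline summability of the resulting Fourier series. In all cases the crux is pinning down the sharp value of $\beta$: anything above the trace-theorem threshold $1/2$ is hyperbolic hidden regularity that cannot be read off from \eqref{c311}, and obtaining it is the single step I expect to require the full strength of the cited results rather than routine estimates.
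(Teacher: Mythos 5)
Your proposal is structurally correct, but note that the paper contains no proof of this lemma to compare against: Lemma \ref{Smoothneu2} is one of the results the paper explicitly only ``lists,'' quoting it from Lasiecka--Triggiani --- \cite{Lasiecka} (cosine-operator theory) for a smooth bounded domain and, for the parallelepiped, the eigenfunction-expansion argument of Theorem 6.1N.1 in \cite{Lasiecka2}, which the paper invokes again in the proof of Theorem \ref{Smooth}. Relative to that bare citation your sketch adds real content and is consistent with how the quoted results are actually proved: the energy/Duhamel argument for \eqref{c311} is correct and self-contained (modulo the routine shift making $\mathcal{A}$ positive before taking square roots), and your diagnosis of \eqref{c411} is exactly right --- indeed the situation is even worse than you state, since $y'\in C^0([0,T];L^2(\Omega))$ gives no time regularity of the trace at all, so both factors of $H^{\beta}(\Sigma)=L^2(0,T;H^{\beta}(\Gamma))\cap H^{\beta}(0,T;L^2(\Gamma))$ are hidden hyperbolic regularity. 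Your transposition reduction to interior $L^2$ regularity of the time-reversed Neumann-input problem with $H^{-\beta}(\Sigma)$ data is precisely the duality mechanism in Lasiecka--Triggiani's framework, and the cosine-series computation on the box is their route to $\beta=3/4-\epsilon$. What your account buys is transparency about why the trace-theorem threshold $1/2$ is insufficient and where the sharp exponents must come from; what it deliberately does not do (as you honestly flag) is derive $3/5$ and $3/4-\epsilon$, which in your write-up and in the paper alike are imported from the same references, so there is no mismatch. One small caveat: since $A(x,\partial)$ contains first-order terms $b_i$, Green's formula produces an additional zero-order boundary term of the form $(b\cdot\nu)\,y\,\phi$, so the natural boundary condition of the adjoint problem is not simply $\partial\phi/\partial\nu_{A^*}=\psi$; this perturbs your pairing identity only by harmless lower-order boundary contributions, but it should be recorded if the reduction is written out in full.
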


\begin{lemma}
\label{Smoothneu3}
Under the assumption on $\Omega$ given in Lemma \ref{Smoothneu}, assume that $f \equiv 0$ and $g \equiv 0 $. 

$(1)$~If $(y_0, y_1) \in H^1(\Omega)\times L^2(\Omega) $, then the unique solution $y$ to   problem \eqref{las}  satisfies
\begin{equation}\label{2.50}(y, y')\in C^0([0, T];  H^{1}(\Omega)\times  L^2(\Omega)) \end{equation}
and
\begin{equation}\label{2.51}y|_{\Sigma}\in H^{\beta}(\Sigma).\end{equation}

$(2)$ If $(y_0, y_1) \in L^2(\Omega) \times (H^1(\Omega))' $, where $ (H^1(\Omega))' $ denotes the dual space of $H^1(\Omega) $ with respect to $L^2(\Omega)$, then  the unique solution $y$ to   problem \eqref{las}  satisfies
\begin{equation}\label{2.57}(y, y')\in C^0([0, T]; L^2(\Omega) \times (H^1(\Omega))' ) \end{equation}
and
\begin{equation}\label{2.58} y|_{\Sigma}\in H^{\alpha-1}(\Sigma). \end{equation}
\end{lemma}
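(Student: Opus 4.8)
The plan is to read off the two interior regularities from the well-posedness of the Neumann realization $\mathcal{A}$ and its transpose, and to treat the two boundary traces as hidden-regularity statements, borrowing the sharp exponents from Lemmas \ref{Smoothneu} and \ref{Smoothneu2} (and, on the parallelepiped, from an explicit eigenfunction expansion). First I would settle the interior estimate \eqref{2.50} of Part $(1)$. The principal part of $A(x,\pa)$ is uniformly strongly elliptic and symmetric, so the Neumann realization of this principal part generates a strongly continuous cosine family $C(t)$ with associated sine family $S(t)$ on $L^2(\Omega)$, and the lower-order terms $b_i\pa_i$ and $c_0$ are a bounded perturbation that preserves this structure. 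Since $\mathcal{D}(\mathcal{A}^{1/2})=H^1(\Omega)$ for the Neumann realization, the energy space is precisely $H^1(\Omega)\times L^2(\Omega)$; the representation $y(t)=C(t)y_0+S(t)y_1$ together with the energy estimate (a Gronwall argument absorbing the lower-order terms) gives \eqref{2.50}. The interior estimate \eqref{2.57} of Part $(2)$ follows by transposition: $L^2(\Omega)\times (H^1(\Omega))'$ is the dual phase space of $H^1(\Omega)\times L^2(\Omega)$ under the pairing that makes the wave flow anti-self-adjoint, so extending $C(t),S(t)$ by duality to this larger space defines the solution and yields \eqref{2.57}.

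The core of the argument is the boundary trace regularity \eqref{2.51} and \eqref{2.58}. Because the Neumann (hence Robin) problem enjoys only partial hidden regularity, these are not the output of a one-line integration by parts but of the sharp analysis underlying Lemmas \ref{Smoothneu} and \ref{Smoothneu2}. For \eqref{2.51} I would mirror the cosine-operator plus boundary-symbol computation that produces the exponent $\beta$ for the source problem in Lemma \ref{Smoothneu2}: represent $y|_\Sigma$ through $C(t)y_0+S(t)y_1$, split off the stationary elliptic part so that the trace is governed by the same boundary (Neumann-to-trace) operator that controls the $f$-problem, and conclude $y|_\Sigma\in H^\beta(\Sigma)$ with norm dominated by $\|(y_0,y_1)\|_{H^1\times L^2}$. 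The lower-order trace \eqref{2.58} of Part $(2)$ is then recovered either by the transposition that gave \eqref{2.57}, tested against the finite-energy solutions of Part $(1)$, or more concretely by the antiderivative substitution $w(t)=\int_0^t y(s)\,ds$, which converts the datum $y_1$ into a time-independent source and places $w$ under Lemma \ref{Smoothneu2}; differentiating $y=w'$ then costs exactly the one derivative that separates $H^\beta(\Sigma)$ from $H^{\alpha-1}(\Sigma)$.

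On the parallelepiped, where \eqref{alfarobin} promises the sharper $\alpha=\beta=3/4-\epsilon$ with $A(x,\pa)=-\Delta$, I would set the abstract machinery aside and expand $y$ in the explicit product-cosine eigenbasis of the Neumann Laplacian, compute the boundary trace term by term, and read the exponent off the summability of the resulting series; the one-dimensional case is sharper still and is quoted from \cite{Lasiecka3}. The main obstacle is precisely the sharpness of these non-integer exponents: the Neumann/Robin trace does not gain the full derivative available in the Dirichlet setting, so the delicate point is to propagate the fractional Sobolev orders through the duality and the cosine-family calculus so that the two slots of the interior regularity and the single boundary slot degrade in exact lockstep, reproducing $(H^1,L^2,H^\beta)$ in Part $(1)$ and $(L^2,(H^1)',H^{\alpha-1})$ in Part $(2)$.
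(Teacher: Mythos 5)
You should first be aware that the paper contains no proof of Lemma \ref{Smoothneu3} at all: it is presented, together with Lemmas \ref{Smoothneu} and \ref{Smoothneu2}, as a quoted result, taken from Lasiecka--Triggiani \cite{Lasiecka} for a general smooth domain, from the eigenfunction-expansion analysis in \cite{Lasiecka2} for the parallelepiped, and from \cite{Lasiecka3} for $n=1$ (the paper says explicitly that it ``lists only those results which are needed''). So there is no internal argument to compare yours against; the only fair question is whether your sketch would stand as a proof on its own. The interior statements are fine: \eqref{2.50} by the cosine/sine family on the energy space $H^1(\Omega)\times L^2(\Omega)$ (equivalently, conservation of energy plus Gronwall for the lower-order terms), and \eqref{2.57} by transposition to the dual phase space. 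This part is standard and correct.

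The genuine gap is in the boundary traces \eqref{2.51} and \eqref{2.58}, which are the entire content of the lemma. Your plan there is to ``mirror the cosine-operator plus boundary-symbol computation'' that yields the exponent $\beta$ -- but that computation \emph{is} the Lasiecka--Triggiani theorem: the exponent $\beta=3/5$ for a general smooth domain is obtained by a delicate microlocal/pseudodifferential analysis that no energy identity or abstract cosine-family calculus reproduces, and the exponent $3/4-\epsilon$ on the parallelepiped requires carrying out the product-cosine eigenfunction expansion and the summability estimate you only allude to. As written, your argument for the traces is a deferral to the very results being proved, i.e.\ a proof by citation -- which is legitimate (it is what the paper does), but then the first two-thirds of your text is scaffolding around an appeal to \cite{Lasiecka}, \cite{Lasiecka2}. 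There is also a concrete error in your fallback route for \eqref{2.58}: the antiderivative $w(t)=\int_0^t y(s)\,ds$ produces the time-independent source $y_1\in (H^1(\Omega))'$, which is \emph{not} in $L^2(0,T;L^2(\Omega))$, so $w$ is not covered by Lemma \ref{Smoothneu2}. To repair this you must add an elliptic correction $z$ solving the Neumann problem $A(x,\pa)z=-y_1$ (modulo the kernel of the Neumann realization, i.e.\ modulo constants, which forces a compatibility normalization), so that $w+z$ becomes a finite-energy solution covered by part $(1)$; and you must still justify that $\partial_t$ maps $H^{\beta}(\Sigma)$ boundedly into the negative-order anisotropic space $H^{\beta-1}(\Sigma)$, defined by duality -- a step you assert but do not prove, and which is exactly where the ``lockstep degradation'' of exponents you mention has to be checked rather than postulated.
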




\begin{rem}
In the  results mentioned above,
the  mappings from the given data to the solution are all continuous with respect to the corresponding topologies.
\end{rem}

\section{Well-posedness of a coupled system of wave equations  with coupled Robin boundary conditions}

Let $\Omega \subset \mathbb{R}^n $ be a smooth bounded domain or a  parallelepiped as mentioned before. We now prove the well-posedness of problem  \eqref{B}  and  \eqref{Bin}.

Let $\Phi= (\phi^{(1)},\cdots, \phi^{(N)})^T.$ We first consider the following  adjoint system
\begin{align} \label{dual}\begin{cases}
\Phi''-{\Delta} \Phi +A^T\Phi=0\hskip2.1cm~& \hbox{in} \quad    (0,+\infty) \times \Omega,\\
\partial_\nu \Phi + B^T\Phi = 0 \hskip3cm~& \hbox{on} \quad  (0,+\infty) \times \Gamma  \\
\end{cases}
\end{align}
with the initial data
\begin{equation}\label{3.2new} t=0:\quad \Phi=\widehat{\Phi}_0, \quad  \Phi'=\widehat{\Phi}_1\quad  \hbox{in}  ~~ \Omega,\end{equation}
where $A^T$ and $B^T$ denote the transpose of   $A $ and $B $, respectively.

\begin{thm}\label{welldual}
Assume that  $\Omega \subset \mathbb{R}^n $ is a smooth bounded domain or a  parallelepiped. Assume furthermore that $B$ is similar to a real symmetric matrix. Then for any given $(\widehat{\Phi}_0,\widehat{\Phi}_1) \in (\mathcal H_1)^N \times (\mathcal H_0)^N$, the adjoint problem \eqref{dual}--\eqref{3.2new} admits a unique weak solution 
\begin{equation}\label{re}(\Phi, \Phi')\in  C^0_{loc}([0,+\infty); (\mathcal  H_1)^N \times (\mathcal H_0 )^N)  \end{equation}
in the sense of  $C_0$-semigroup, where  $\mathcal H_1$ and $\mathcal H_0$ are defined by \eqref{hdef}.
\end{thm}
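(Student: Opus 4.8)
The plan is to recast the second-order adjoint system as a first-order evolution equation $\frac{d}{dt}(\Phi,\Phi')^T=\mathcal{A}(\Phi,\Phi')^T$ on the energy space $\mathcal{V}=(\mathcal H_1)^N\times(\mathcal H_0)^N$, and to show that the operator $\mathcal{A}$, whose domain encodes the Robin condition $\partial_\nu\Phi+B^T\Phi=0$, generates a $C_0$-semigroup (in fact a group, since the wave dynamics are time-reversible) via the Lumer--Phillips theorem. Before doing so, I would exploit the hypothesis that $B$ is similar to a real symmetric matrix in order to symmetrize the boundary coupling: writing $B^T=(P^T)^{-1}S\,P^T$ with $S=S^T$, the constant-coefficient substitution $\Psi=P^T\Phi$ — which commutes with $\partial_t$ and $\Delta$ and maps each $\mathcal H_k$ isomorphically onto itself, since $P^T$ is invertible and preserves the mean-zero constraint — turns the boundary condition into $\partial_\nu\Psi+S\Psi=0$ while replacing $A^T$ by a new, still generally non-symmetric, matrix $\tilde A=(P^{-1}AP)^T$. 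Thus, without loss of generality, I may assume the boundary matrix is symmetric.

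With $S$ symmetric, the natural sesquilinear form attached to the spatial operator, obtained from Green's formula after inserting $\partial_\nu\Phi=-S\Phi$, is $a(\Phi,\tilde\Phi)=\int_\Omega\nabla\Phi\cdot\nabla\tilde\Phi\,dx+\int_\Gamma S\Phi\cdot\tilde\Phi\,dS$. This form is symmetric and continuous on $(\mathcal H_1)^N$; although the boundary term may be indefinite, the trace inequality $\|\Phi\|_{L^2(\Gamma)}^2\le \epsilon\|\nabla\Phi\|_{L^2(\Omega)}^2+C_\epsilon\|\Phi\|_{L^2(\Omega)}^2$ lets me absorb it into the gradient term, so that $a(\Phi,\Phi)+\lambda\|\Phi\|_{L^2}^2\ge c\|\nabla\Phi\|_{L^2}^2$ for $\lambda$ large (a G\aa rding inequality); combined with the Poincar\'e--Wirtinger inequality valid on $\mathcal H_1$, this yields coercivity of the shifted form. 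I would then equip $\mathcal V$ with the inner product built from the shifted form $a(\cdot,\cdot)$ on the first factor and $L^2$ on the second, and verify that, modulo the bounded zeroth-order coupling $\tilde A$, the operator $\mathcal A$ is dissipative (its antisymmetric principal part conserves energy) and that $\lambda I-\mathcal A$ is surjective for large $\lambda$.

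The heart of the argument — and the step I expect to be the main obstacle — is this range/maximality condition, which reduces to solving the stationary Robin elliptic system $\lambda^2\Phi-\Delta\Phi+\tilde A\Phi=F$, $\partial_\nu\Phi+S\Phi=0$, inside the constrained space. Here I would invoke the Lax--Milgram theorem for the coercive shifted form, taking care that the resolvent indeed returns values in $\mathcal V$; the delicate point is the compatibility of the Robin boundary flux with the mean-zero constraint defining $\mathcal H_0$ and $\mathcal H_1$, which is precisely where the algebraic structure of $S$ and the geometric assumption on $\Omega$ must be used. Once $\mathcal A$ is shown to be maximal dissipative after the shift, the Lumer--Phillips theorem produces a $C_0$-semigroup; running the same reasoning for $-\mathcal A$ gives the reverse semigroup, hence a group, and in particular the asserted regularity $(\Phi,\Phi')\in C^0_{loc}([0,+\infty);(\mathcal H_1)^N\times(\mathcal H_0)^N)$, while uniqueness is built into the construction. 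The lower-order coupling $\tilde A$, being bounded on $\mathcal V$, is then reinstated through the bounded-perturbation theorem for generators. As an alternative to the semigroup formalism, one could apply directly the variational theory of second-order hyperbolic equations governed by a continuous, coercive-after-shift bilinear form, which delivers the same existence, uniqueness, and energy-space regularity.
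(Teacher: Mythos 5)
Your proposal is correct, and its core estimates coincide with the paper's own: the reduction, without loss of generality, to a symmetric boundary matrix (the paper simply asserts this; your explicit substitution $\Psi=P^T\Phi$, which preserves the mean-zero spaces and turns $B^T$ into $S$ while conjugating $A^T$ into a bounded matrix, is the justification it omits), and the G\aa rding-type inequality for the form $\int_\Omega\langle\nabla\Phi,\nabla\Phi\rangle dx+\int_\Gamma(\Phi,B\Phi)d\Gamma$, which the paper derives from the interpolation inequality $\int_\Gamma|\phi|^2d\Gamma\leq c\|\phi\|_{H^1(\Omega)}\|\phi\|_{L^2(\Omega)}$ and you derive from the equivalent $\epsilon$-trace inequality. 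Where you genuinely diverge is the functional-analytic wrapper: having checked continuity, coercivity after a shift, and boundedness of the non-symmetric coupling $A$, the paper invokes in one stroke the variational theory of second-order evolution equations (Theorem 1.1 of Chapter 8 in \cite{Lions1961}), whereas you pass to the first-order system and run Lumer--Phillips plus a bounded-perturbation argument. Your route is longer but makes the generation of a $C_0$-\emph{group} explicit, a structure the paper tacitly relies on later when it uses the inverse $S_t^{-1}$ of the semigroup in the proof of Theorem \ref{exist}; your closing sentence, offering the variational theory as an alternative, is exactly the paper's proof. One remark: the point you flag as the expected main obstacle --- compatibility of the Robin boundary flux with the mean-zero constraint --- is not actually an obstacle and requires neither algebraic structure of $S$ nor the geometry of $\Omega$: define the generator weakly through the bilinear form and solve the resolvent equation by Lax--Milgram with the shifted coercive form on $(\mathcal H_1)^N$; the resulting solution, for which the equation is tested only against mean-zero functions, is precisely the solution concept (variational, respectively semigroup) that the theorem asserts, so the constraint is built into the formulation rather than something the dynamics must be shown to preserve pointwise.
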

\begin{proof}
Without loss of generality, we assume that $B$ is   a   real symmetric matrix.

 We  formulate  system \eqref{dual} into the following variational form:
\begin{equation}\int_\Omega (\Phi'',  \widehat{\Phi})dx + \int_\Omega \langle \nabla\Phi, \nabla  \widehat{\Phi} \rangle dx+
\int_{\Gamma}(\Phi, B \widehat{\Phi})d\Gamma +\int_\Omega (\Phi,   A\widehat{\Phi})dx =0 \label{3.4new}\end{equation}
for any  given test function $ \widehat{\Phi} \in (\mathcal H_1)^N$, where $(\cdot, \cdot)$ denotes the inner product of  $\mathbb R^N$, while
$\langle\cdot, \cdot\rangle$  denotes  the  inner product of $\mathbb M^{N\times N}(\mathbb R)$.

Recalling  the following  interpolation inequality (\cite{Liu})
$$\int_{\Gamma}|\phi|^2d\Gamma\leqslant  c \|\phi\|_{H^1(\Omega)}\|\phi\|_{L^2(\Omega)},\quad \forall \phi\in H^1(\Omega),$$
 we have
$$\int_{\Gamma}(\Phi, B\Phi)d\Gamma \leqslant \|B\|\int_{\Gamma}|\Phi|^2d\Gamma
\leqslant c\|B\|\|\Phi\|_{(\mathcal H_1)^N}\|\Phi\|_{(\mathcal H_0)^N},$$
then it is easy to see  that
$$\int_\Omega \langle \nabla\Phi, \nabla\Phi\rangle dx+
\int_{\Gamma}(\Phi,B\Phi)d\Gamma +\lambda \|\Phi\|_{(\mathcal H_0)^N}^2\geqslant c'\|\Phi\|_{(\mathcal H_1)^N}^2$$
for some suitable constants $\lambda>0$ and $c'>0$.
Moreover, the non-symmetric part in \eqref{3.4new} satisfies
$$\int_\Omega (\Phi,   A \widehat{\Phi})dx\leqslant \|A\|\|\Phi\|_{(\mathcal H_0)^N}\|  \widehat{\Phi}\|_{(\mathcal H_0)^N}.$$
By Theorem 1.1 of Chapter 8 in \cite{Lions1961}(p.151), the variational problem (\ref{3.4new})  with the initial data \eqref{3.2new} admits a unique solution $\Phi$ with the smoothness \eqref{re}. The proof is complete.
\end{proof}

\begin{defn}
$U$ is a weak solution to the mixed problem \eqref{B}--\eqref{Bin}, if
\begin{equation} \label{weak}U \in  C_{loc}^0([0, +\infty);  (\mathcal H_0)^N)\cap C_{loc}^1([0, +\infty);  (\mathcal H_{-1})^N),\end{equation}
where $\mathcal H_{-1}$ denotes the dual space of  $\mathcal H_{1}$ with the pivot space $\mathcal H_0$, such that for  any given $(\widehat{\Phi}_0,\widehat{\Phi}_1) \in   ( \mathcal  H_1)^N \times (\mathcal H_0)^N$ and for all given $t\geqslant  0$, we have
\begin{align}\label{defi}
\langle\!\langle &(U'(t), -U (t)),(\Phi(t), \Phi'(t))\rangle \!\rangle  \notag \\
& =\langle\!\langle (\widehat{U}_1,-\widehat{U}_0),( \widehat{\Phi}_0, \widehat{\Phi}_1)\rangle \!\rangle
+\int_0^t \int_{\Gamma}(DH(\tau), \Phi(\tau)) dxdt,
\end{align} in which $\Phi(t)$ is the solution to the adjoint problem \eqref{dual}--\eqref{3.2new}, 
and 
 $\langle\!\langle\cdot,   \cdot \rangle \!\rangle$ denotes the duality between the spaces $({\mathcal H}_{-1})^N \times ({\mathcal H}_0)^N$ and
 $({\mathcal H}_1)^N \times ({\mathcal H}_0)^N$.
\end{defn}

\begin{thm}\label{exist}
Let $\Omega \subset \mathbb{R}^n $ be a smooth bounded domain or a  parallelepiped.  Assume that $B$ is similar to a real symmetric matrix. For any given $H\in L_{loc}^2(0, +\infty; (L^2(\Gamma))^M)$ and $(\widehat{U}_0,\widehat{U}_1) \in  (\mathcal  H_0 )^N \times (\mathcal H_{-1})^N$, 
problem \eqref{B}--\eqref{Bin} admits a unique weak solution $U$. 
Moreover, the mapping
 $$(\widehat{U}_0,\widehat{U}_1, H) \rightarrow (U,U') $$
is continuous with respect to the corresponding topologies.\end{thm}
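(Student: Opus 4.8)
The plan is to construct the weak solution by the method of transposition (duality), taking the well-posedness of the adjoint problem established in Theorem \ref{welldual} as the starting point. Fix $t\geq 0$. For each choice of adjoint initial data $(\widehat{\Phi}_0,\widehat{\Phi}_1)\in (\mathcal H_1)^N\times(\mathcal H_0)^N$, let $\Phi$ be the corresponding solution of \eqref{dual}--\eqref{3.2new} given by Theorem \ref{welldual}, and define the linear form
$$
\ell_t(\widehat{\Phi}_0,\widehat{\Phi}_1)=\langle\!\langle(\widehat{U}_1,-\widehat{U}_0),(\widehat{\Phi}_0,\widehat{\Phi}_1)\rangle\!\rangle+\int_0^t\int_\Gamma (DH(\tau),\Phi(\tau))\,d\Gamma\,d\tau .
$$
The identity \eqref{defi} to be realized says precisely that $\ell_t$ is represented, through the duality $\langle\!\langle\cdot,\cdot\rangle\!\rangle$, by the pair $(U'(t),-U(t))$ tested against $(\Phi(t),\Phi'(t))$. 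Since the adjoint problem generates a $C_0$-group on the energy space, the map $S(t)\colon(\widehat{\Phi}_0,\widehat{\Phi}_1)\mapsto(\Phi(t),\Phi'(t))$ is an isomorphism of $(\mathcal H_1)^N\times(\mathcal H_0)^N$ onto itself, so representing $\ell_t$ is equivalent to representing $\ell_t\circ S(t)^{-1}$ as a bounded functional of $(\Phi(t),\Phi'(t))$.

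The key step is to show that $\ell_t$ is bounded on $(\mathcal H_1)^N\times(\mathcal H_0)^N$. The first term is controlled by the duality pairing between $(\mathcal H_{-1})^N\times(\mathcal H_0)^N$ and $(\mathcal H_1)^N\times(\mathcal H_0)^N$, using the hypothesis $(\widehat{U}_0,\widehat{U}_1)\in(\mathcal H_0)^N\times(\mathcal H_{-1})^N$. For the boundary term, Theorem \ref{welldual} gives $\Phi\in C^0([0,t];(\mathcal H_1)^N)$ with $\|\Phi\|_{C^0([0,t];(H^1(\Omega))^N)}\leq c\,\|(\widehat{\Phi}_0,\widehat{\Phi}_1)\|$; combined with the continuous trace embedding $H^1(\Omega)\hookrightarrow L^2(\Gamma)$ this yields $\Phi|_\Sigma\in L^2(0,t;(L^2(\Gamma))^N)$ with the same bound, so that by Cauchy--Schwarz
$$
\Big|\int_0^t\!\int_\Gamma(DH,\Phi)\,d\Gamma\,d\tau\Big|\leq \|D\|\,\|H\|_{L^2(0,t;(L^2(\Gamma))^M)}\,\|\Phi\|_{L^2(0,t;(L^2(\Gamma))^N)}\leq c\,\|H\|\,\|(\widehat{\Phi}_0,\widehat{\Phi}_1)\| .
$$
It is worth stressing that, in contrast with the later controllability arguments, only the energy-level trace regularity is needed here, not the hidden regularity of Lemma \ref{Smoothneu}. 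Boundedness of $\ell_t$ together with the invertibility of $S(t)$ then allows me to apply the Riesz representation and define $(U'(t),-U(t))\in(\mathcal H_{-1})^N\times(\mathcal H_0)^N$ satisfying \eqref{defi}, with the estimate $\|(U(t),U'(t))\|_{(\mathcal H_0)^N\times(\mathcal H_{-1})^N}\leq c(\|(\widehat{U}_0,\widehat{U}_1)\|+\|H\|)$ uniform for $t$ in bounded intervals.

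It then remains to promote this pointwise-in-$t$ construction to the regularity \eqref{weak} and to settle uniqueness and continuous dependence. Uniqueness is immediate: zero data forces $\ell_t\equiv 0$, hence $\langle\!\langle(U'(t),-U(t)),(\Phi(t),\Phi'(t))\rangle\!\rangle=0$, and since $(\Phi(t),\Phi'(t))$ exhausts $(\mathcal H_1)^N\times(\mathcal H_0)^N$ as the adjoint data vary (surjectivity of $S(t)$), one gets $(U(t),U'(t))=0$. Continuous dependence of $(\widehat U_0,\widehat U_1,H)\mapsto(U,U')$ follows from the linearity of the construction and the uniform estimate above. For the time-continuity $U\in C^0_{loc}([0,+\infty);(\mathcal H_0)^N)\cap C^1_{loc}([0,+\infty);(\mathcal H_{-1})^N)$, I would first treat regular data, namely $H$ smooth and compactly supported in time and $(\widehat U_0,\widehat U_1)$ smooth and compatible with the Robin boundary condition, for which \eqref{B}--\eqref{Bin} possesses a classical solution that satisfies \eqref{defi} after integration by parts against the adjoint solution; one then passes to the limit by density in $(\mathcal H_0)^N\times(\mathcal H_{-1})^N$ and in $L^2_{loc}(0,+\infty;(L^2(\Gamma))^M)$, using the uniform estimate to pick up the asserted continuity and to identify $U'$ as the distributional time-derivative of $U$.

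The main obstacle is exactly this last point: checking that the family $\{(U(t),U'(t))\}_t$, built separately at each instant, glues into a function continuous in $t$ with $U'$ genuinely the derivative of $U$. The uniform boundedness supplies the required compactness, but identifying the limit and proving continuity rests on the regularization argument together with the continuity in $t$ of the right-hand side of \eqref{defi} (the boundary integral is continuous in $t$ since $DH\in L^2$ and $\Phi|_\Sigma$ varies continuously, while the duality term is continuous by Theorem \ref{welldual}).
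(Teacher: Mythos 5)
Your proposal is correct and follows essentially the same route as the paper: both construct the solution by transposition against the adjoint problem of Theorem \ref{welldual}, represent the same linear functional $L_t$ composed with $S_t^{-1}$ via the Riesz--Fr\'echet theorem to obtain $(U'(t),-U(t))$ with the uniform estimate, and recover the time-continuity \eqref{weak} by a density argument. Your write-up merely fills in details the paper leaves implicit (the trace bound justifying boundedness of the boundary term, and the regular-data approximation behind the density step), so there is nothing to correct.
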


\begin{proof}
Let $\Phi$ be the solution to the adjoint problem  \eqref{dual}--\eqref{3.2new}.

Define a linear functional as follows:
\begin{align}
L_t(\widehat{\Phi}_0,\widehat{\Phi}_1)= \langle\!\langle (\widehat{U}_1,-\widehat{U}_0),( \widehat{\Phi}_0, \widehat{\Phi}_1)\rangle\!\rangle
+\int_0^t \int_{\Gamma}(DH(\tau), \Phi(\tau)) dxdt.
\end{align}Clearly,   $L_t$ is bounded in $(\mathcal   H_1)^N \times (\mathcal   H_0)^N$. Let $S_t$ be the semigroup in $(\mathcal   H_1)^N \times (\mathcal   H_0)^N$, corresponding to the adjoint problem \eqref{dual}--\eqref{3.2new}.   $L_t\circ S_t^{-1}$ is bounded in $(\mathcal   H_1)^N \times (\mathcal   H_0)^N$. Then, by Riesz-Fr\'{e}chet representation theorem, for any given $(\widehat{\Phi}_0,\widehat{\Phi}_1) \in  (\mathcal   H_1)^N \times (\mathcal   H_0)^N$, there exists a unique $(U'(t),-U(t))\in (\mathcal  H_{-1})^N \times (\mathcal   H_0)^N$, such that
\begin{align}
L_t\circ S_t^{-1}(\Phi(t),\Phi'(t))= \langle\!\langle (U'(t),-U(t)),(\Phi(t),\Phi'(t))\rangle\!\rangle.
\end{align}
By
\begin{align}
L_t\circ S_t^{-1}(\Phi(t),\Phi'(t))= L_t(\widehat{\Phi}_0,\widehat{\Phi}_1)
\end{align}for any given $(\widehat{\Phi}_0,\widehat{\Phi}_1) \in  ( \mathcal   H_1)^N \times (\mathcal   H_0)^N$, \eqref{defi} holds, then $(U,U')$ is the unique weak solution to problem  \eqref{B}--\eqref{Bin}. Moreover, we have
\begin{align}
\|(U'(t),-U(t))\|_{ (\mathcal   H_{-1})^N \times (\mathcal   H_0)^N}=\|L_t\circ S_t^{-1}\| \notag \qquad\qquad\qquad\qquad \\
\leq c(\|(\widehat{U}_0, \widehat{U}_1)\|_{ (\mathcal   H_0)^N\times( \mathcal   H_{-1})^N }+\|H\|_{L^2(0,T; (L^2(\Gamma))^M)})
\end{align}for all $t \in [0,T]$.

At last, by a classic argument of density, we obtain the regularity desired by \eqref{weak}.\end{proof}

\begin{rem}
From now on, in order to guarantee the well-posednessz of  problem  \eqref{B}--\eqref{Bin},  we always assume that $B$ is similar to a real symmetric matrix. This condition  is also required for the well-posedness of weak solution even in one-space-dimensional case.
However, the exact boundary controllability and the exact boundary synchronization of classical solutions in one-space-dimensional case were  done   without the symmetry of $B$  in \cite{Li} and \cite{Hu}.
\end{rem}

\section{Regularity of solutions 
with coupled Robin boundary conditions}

In this section, we will improve the regularity results for Robin problem  by means of the regularity results for Neumann problem mentioned in Section \ref{sec2}. 

\begin{thm}  \label{Smooth} 
When $ \Omega \subset \mathbb{R}^n$ is a  smooth  bounded domain or a parallelepiped,  for any given $H\in L^2(0, T; (L^2(\Gamma))^M)$ and any given $ (\widehat{U}_0,\widehat{U}_1) \in (\mathcal   H_1)^N \times (\mathcal H_0)^N$, the weak solution $U$ to  
problem \eqref{B}--\eqref{Bin} satisfies
\begin{equation}(U, U')\in C^0([0, T];   (H^{\alpha}(\Omega))^N \times (H^{\alpha-1}(\Omega))^N)\label{sm1}\end{equation}
and
\begin{equation}U|_{\Sigma}\in (H^{2\alpha-1}(\Sigma))^N,\label{sm2}\end{equation}
where $\Sigma=(0, T)\times \Gamma$, and $\alpha$ is defined by \eqref{alfarobin}. Moreover, the linear mapping
$$(\widehat{U}_0,\widehat{U}_1, H) \rightarrow (U, U') $$
 is continuous with respect to the corresponding topologies.
\end{thm}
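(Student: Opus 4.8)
The plan is to reduce the regularity problem for the coupled Robin system \eqref{B}--\eqref{Bin} to the decoupled Neumann problem studied in Section \ref{sec2}, by treating both the coupling term $AU$ and the boundary coupling term $BU$ as inhomogeneous data that can be absorbed into $f$ and $g$ of problem \eqref{las}. The key observation is that once we know, from Theorem \ref{exist}, that the weak solution $U$ exists and lies in the energy space $C^0([0,T];(\mathcal H_0)^N)\cap C^1([0,T];(\mathcal H_{-1})^N)$, we can rewrite each component equation in the form
\begin{align}\label{plan-rewrite}
(u^{(k)})''-\Delta u^{(k)} = f^{(k)}, \qquad
\partial_\nu u^{(k)} = g^{(k)} \quad \text{on } \Gamma,
\end{align}
where $f^{(k)}=-(AU)^{(k)}$ and $g^{(k)}=(DH)^{(k)}-(BU)^{(k)}$. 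Since $A(x,\pa)=-\Delta$ is a special case of \eqref{oper}, the three Neumann regularity lemmas (Lemmas \ref{Smoothneu}, \ref{Smoothneu2}, \ref{Smoothneu3}) apply to each scalar equation, provided we can control the regularity of $f^{(k)}$ and $g^{(k)}$.

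First I would decompose the solution by superposition into pieces driven respectively by the initial data, by the interior source $f$, and by the boundary source $g$, invoking Lemma \ref{Smoothneu3}, Lemma \ref{Smoothneu2}, and Lemma \ref{Smoothneu} in turn. The contribution of the boundary control $DH$ enters through $g$ and, by Lemma \ref{Smoothneu}, yields exactly the target regularity \eqref{sm1}--\eqref{sm2} with exponent $\alpha$. The initial data $(\widehat U_0,\widehat U_1)\in(\mathcal H_1)^N\times(\mathcal H_0)^N\subset (H^1(\Omega))^N\times(L^2(\Omega))^N$ is handled by part $(1)$ of Lemma \ref{Smoothneu3}, which gives $(U,U')\in C^0([0,T];(H^1)^N\times(L^2)^N)$ and trace in $(H^\beta(\Sigma))^N$; since $\alpha\le 1$ and $\beta\ge 2\alpha-1$ under \eqref{alfarobin}, this piece is at least as regular as required. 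The interior term $f=-AU$ is controlled by Lemma \ref{Smoothneu2}, which again delivers $(H^1)^N\times(L^2)^N$ regularity and a trace in $(H^\beta(\Sigma))^N$, dominating the target regularity.

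The main obstacle will be the boundary coupling term $g\ni -BU|_\Gamma$, because it depends on the trace of the unknown $U$ itself, so the argument is not a one-shot application of the Neumann lemmas but rather a fixed-point or bootstrap (continuous-induction) scheme. The natural plan is to set up a contraction mapping on a suitable function space: given a candidate $U$ with trace $U|_\Sigma\in(H^{2\alpha-1}(\Sigma))^N$, we feed $-BU|_\Sigma$ (together with $DH$) as the boundary datum $g$ into Lemma \ref{Smoothneu} and $-AU$ as the interior datum into Lemma \ref{Smoothneu2}, producing a new solution $\widetilde U$, and we must show this map is a contraction on a short time interval $[0,T_0]$. The delicate point is the regularity matching: Lemma \ref{Smoothneu} requires $g\in L^2(0,T;L^2(\Gamma))$, and indeed $BU|_\Sigma\in(H^{2\alpha-1}(\Sigma))^N\hookrightarrow L^2(0,T;L^2(\Gamma))^N$ since $2\alpha-1>0$, so the feedback loop closes; the exponent $2\alpha-1$ chosen in \eqref{alfarobin} is precisely what makes the boundary trace strong enough to be reinserted as an admissible Neumann datum. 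After establishing the estimate on $[0,T_0]$ with a constant independent of the data, I would extend to the full interval $[0,T]$ by a standard continuation argument, patching together the local-in-time bounds, and finally read off continuity of the data-to-solution map from the continuity of each Neumann solution operator (guaranteed by the Remark following Lemma \ref{Smoothneu3}) together with the uniform contraction constant.
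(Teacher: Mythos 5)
Your superposition framework (initial data via Lemma \ref{Smoothneu3}, interior source $-AU$ via Lemma \ref{Smoothneu2}, boundary source via Lemma \ref{Smoothneu}, with $-AU$ treated as known $L^2$ data thanks to Theorem \ref{exist}) is sound, but the way you close the boundary feedback loop has a genuine gap. Your contraction claim needs the linear map $V|_\Sigma \mapsto \widetilde V|_\Sigma$ (feed $-BV|_\Sigma$ as Neumann data, take the trace of the resulting solution), acting on $L^2(\Sigma_0)$ with $\Sigma_0=(0,T_0)\times\Gamma$, to have norm $<1$ for small $T_0$. Nothing in the quoted lemmas gives this: Lemma \ref{Smoothneu} asserts boundedness of $g\mapsto y|_\Sigma$ from $L^2(\Sigma)$ into $H^{2\alpha-1}(\Sigma)$ for a fixed $T$, with no statement about the behavior of the constant as $T\to 0$, and a gain of $2\alpha-1$ derivatives buys \emph{compactness} of the feedback map, never \emph{smallness}. (Smallness as $T_0\to0$ is plausible by scaling for a flat boundary, but it is an additional hyperbolic estimate you would have to prove, not quote.) Your continuation step also does not run on the quoted lemmas: to restart at $t=T_0$ you need trace regularity for initial data in $H^{\alpha}(\Omega)\times H^{\alpha-1}(\Omega)$, whereas Lemma \ref{Smoothneu3} covers only the levels $H^1(\Omega)\times L^2(\Omega)$ and $L^2(\Omega)\times (H^1(\Omega))'$, so an interpolation argument is silently assumed. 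A cleaner repair of your scheme would replace contraction by a Fredholm argument: the affine map $V\mapsto\widetilde V$ has compact linear part (by the embedding $H^{2\alpha-1}(\Sigma)\hookrightarrow L^2(\Sigma)$), and injectivity of the corresponding $I-K$ follows from uniqueness in Theorem \ref{exist}, so a fixed point with the stated regularity exists and coincides with the weak solution.

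The paper avoids the loop altogether by a different idea. Since $B$ is similar to a real symmetric matrix, one diagonalizes $B^T$: for each eigenpair $B^Te=\lambda e$ one sets $\phi=(e,U)$ and $\psi=e^{\lambda h}\phi$, where $h\in C^2(\overline\Omega)$ satisfies $\nabla h=\nu$ on $\Gamma$. This transforms the Robin condition \emph{exactly} into a Neumann condition $\partial_\nu\psi=e^{\lambda h}(e,DH)$, at the cost of a first-order interior term $b(\psi)$, which is admissible because the Neumann lemmas are stated for general operators of the form \eqref{oper}. A single application of Lemmas \ref{Smoothneu} and \ref{Smoothneu2} then finishes the smooth-domain case, with no iteration; the parallelepiped case, where \eqref{alfarobin} requires $A(x,\partial)=-\Delta$ so the transformation is not allowed, is handled separately by eigenfunction expansions. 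Note that you never use the standing hypothesis that $B$ is similar to a symmetric matrix except through well-posedness; in the paper that hypothesis is precisely what makes the diagonalization, and hence the exact reduction to a Neumann problem, possible.
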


\begin{proof} We first consider the case that  $\Omega$ is sufficiently smooth, for example, with $C^3$ boundary. There exists a function $h\in C^2(\overline\Omega)$, such that
\begin{equation}\nabla h=\nu\quad \hbox{on}\quad  \Gamma,\end{equation}
where $\nu$ is the unit outward normal vector on the boundary $\Gamma$(\cite{Lions3}).

Noting \eqref{2.50} and \eqref{2.51} in Lemma \ref{Smoothneu3}, it is easy to see that we need only to consider the case  $\widehat{U}_0 \equiv \widehat{U}_1 \equiv 0$.

Let $\lambda$ be an eigenvalue of  $B^T$ and let $e$ be the corresponding   eigenvector:
$$ B^T e =\lambda e.$$
Defining
\begin{equation}\phi = (e, U),\end{equation}
we have
\begin{equation}\label{4.5}\left\lbrace
\begin{array}{ll}\phi''-\Delta\phi =-(e, AU)  & \hbox{in} \quad (0,T) \times \Omega,\\
\partial_\nu \phi + \lambda\phi = (e, DH)   & \hbox{on} \quad (0,T) \times \Gamma,\\
t=0:~\phi=0, ~ \phi'=0  & \hbox{in} \quad \Omega.
\end{array}
\right.
\end{equation}(Equation \eqref{4.5} is actually valid in the weak sense,   for simplicity of presentation, however, we  write it in the classical sense here and hereafter.)
Let
\begin{equation}\psi = e^{\lambda h}\phi.\end{equation}
Problem \eqref{B}--\eqref{Bin} can be rewritten into the following problem with Neumann boundary conditions:
\begin{equation}\left\lbrace
\begin{array}{lll}\psi''-\Delta\psi +b(\psi)=-e^{\lambda h}(e, AU)\quad &\hbox{in}
\quad  (0,T) \times \Omega,\\
\partial_\nu \psi = e^{\lambda h}(e, DH) & \hbox{on} \quad  (0,T) \times \Gamma,\\
t=0:\quad \psi=0,\quad \psi'=0 & \hbox{in} \quad   \Omega,\end{array}
\right.
\end{equation}
where $b(\psi) = 2\lambda\nabla h\cdot\nabla\psi + \lambda   (\Delta h-\lambda
|\nabla h|^2)\psi$ is a  first order  linear form of  $\psi$ with smooth coefficients.

By Theorem \ref{exist}, $U\in C^0([0, T]; ( \mathcal H_0)^N )$. By \eqref{c311} in Lemma \ref{Smoothneu2},
the solution $\psi$ to the following problem with homogeneous Neumann boundary conditions:
\begin{equation}\left\lbrace
\begin{array}{ll}
\psi''-\Delta\psi +b(\psi)=-e^{\lambda h}(e, AU)\quad & \hbox{in}
\quad  (0,T) \times\Omega,\\
\partial_\nu \psi =0 & \hbox{on} \quad  (0,T) \times\Gamma,\\
t=0:\quad  \psi=0,\quad  \psi'=0 \quad & \hbox{in} \quad ~  \Omega \end{array}
\right.
\end{equation} satisfies
\begin{equation}( \psi, \psi')\in C^0([0, T];  H^{1}(\Omega)\times  L^2(\Omega))
.\end{equation}

Next, we consider the following problem with inhomogeneous Neumann boundary conditions but without internal force terms:
\begin{equation}\left\lbrace
\begin{array}{ll}
\psi''-\Delta\psi +b(\psi)=0 \quad & \hbox{in} \quad (0,T) \times \Omega,\\
\partial_\nu \psi = e^{\lambda h}(e, DH)  & \hbox{on} \quad (0,T) \times\Gamma,\\
t=0:\quad  \psi=0,\quad  \psi'=0 & \hbox{in} ~ \quad   \Omega.\end{array}
\right.
\end{equation}
By \eqref{c31} and \eqref{c41} in Lemma \ref{Smoothneu}, we have
\begin{equation}
(\psi, \psi') \in C^0([0, T];  H^{\alpha}(\Omega)\times H^{\alpha-1}(\Omega))\end{equation}
and
\begin{equation}\psi|_{\Sigma}\in H^{2\alpha-1}(\Sigma)= H^{2\alpha-1}(0, T; L^2(\Gamma))\cap L^2(0, T; H^{2\alpha-1}(\Gamma)),
\end{equation}
where $\alpha$ is given by the first formula of \eqref{alfarobin}. Since this regularity result holds for all the eigenvectors of $B^T$, and all the eigenvectors of $B^T$ constitute  a set of basis in $\rr^N$,   we get the desired \eqref{sm1} and \eqref{sm2}.

We next consider the case that $\Omega$ is a parallelepiped. Although the boundary is only piecewise smooth, however, using direct eigenfunction expansions as for Theorem 6.1N.1 in \cite{Lasiecka2},  we can easily check that the regularity results (4.1)--(4.2)  remain true with $\alpha = 3/4-\epsilon$.
\end{proof}


\section{Exact boundary controllability and non-exact boundary controllability }
In this section, we will study  the exact boundary controllability and the non-exact boundary controllability for the coupled system \eqref{B} of wave equations with coupled $\text{Robin}$ boundary controls. We will prove that, for a  smooth  bounded domain  $\Omega\subset  \mathbb{R}^n$,   when the number of boundary controls is equal to $N$, the number of state variables,  system  \eqref{B} is exactly controllable for any given initial data $   (\widehat {U}_0, \widehat {U}_1)\in    ({\mathcal H}_1 )^N\times ({\mathcal H}_0)^N$, while, for a parallelepiped in $ \mathbb{R}^n$,  if $M=\text{rank}(D) <N$, namely, the number of controls needed is less than the number of variables, system \eqref{B} is not exactly  controllable in $({\cal H}_1 )^N\times ({\cal H}_0)^N$.

\subsection{Exact boundary controllability}
\begin{defn} System  \eqref{B}   is  exactly  null  controllable in the space $   (\mathcal  H_1)^N\times    (\mathcal  H_0)^N$,    if  there exists a positive constant $T>0$,  such that  for any given
$   (\widehat U_1, \widehat U_0)\in    (\mathcal  H_1)^N\times    (\mathcal  H_0)^N$, there exists a boundary control
 $H\in L^2   (0, T;    (L^2   (\Gamma))^M$,  such that  problem    \eqref{B}--\eqref{Bin}   admits a unique  weak solution $U$ satisfying the final condition
\begin{equation}t=T:\quad U=U'=0.\label {20.1}\end{equation}
 \end{defn}

\begin{thm}\label{controllable}
Assume that $M=\text{rank}(D)=N$. 
For a  smooth  bounded domain $\Omega\subset  \mathbb{R}^n  $,  system \eqref{B}  is exactly   controllable at a certain time $T>0$,  and the boundary control   continuously depends on the initial data:
\begin{align}\label{con1}
\|H\|_{L^2(0, T; (L^2(\Gamma))^N)}\leq c\| (\widehat{U}_0, \widehat{U}_1)\|_{({\cal H}_1 )^N\times ({\cal H}_0)^N},\end{align}
where  $c>0$ is a positive constant.
\end{thm}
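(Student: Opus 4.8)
The plan is to establish exact controllability by the Hilbert Uniqueness Method (HUM), reducing it to an observability inequality for the adjoint system \eqref{dual}--\eqref{3.2new}. Since $M=\mathrm{rank}(D)=N$, the control matrix $D$ is invertible, so the boundary control $DH$ can realize an arbitrary $(L^2(\Gamma))^N$-valued Neumann-type input; after a change of unknown this lets us treat the controls component by component. First I would fix a duality pairing between the state space $(\mathcal H_1)^N\times(\mathcal H_0)^N$ and its dual $(\mathcal H_{-1})^N\times(\mathcal H_0)^N$, and observe via the weak formulation \eqref{defi} that exact controllability at time $T$ is equivalent to the surjectivity of the control-to-final-state map, which in turn is equivalent to the observability estimate
\begin{equation}\label{obs}
\|(\widehat\Phi_0,\widehat\Phi_1)\|_{(\mathcal H_1)^N\times(\mathcal H_0)^N}^2
\;\leq\; c\int_0^T\!\!\int_\Gamma |\Phi|^2\,d\Gamma\,dt
\end{equation}
for all solutions $\Phi$ of the adjoint problem, where the trace $\Phi|_\Sigma$ makes sense by the regularity of Theorem \ref{Smooth}.

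Next I would prove \eqref{obs} by the multiplier method of Lions, adapted to the coupled Robin boundary condition $\partial_\nu\Phi+B^T\Phi=0$. Using the multiplier $m\cdot\nabla\Phi$ together with $\Phi$ itself, and exploiting the geometric condition \eqref{geo} that $(m,\nu)>0$ on $\Gamma$, one obtains an identity controlling the full energy $\|(\widehat\Phi_0,\widehat\Phi_1)\|^2$ by the boundary terms $\int_\Sigma |\partial_\nu\Phi|^2$ and $\int_\Sigma|\nabla_\tau\Phi|^2$ and $\int_\Sigma|\Phi|^2$, plus lower-order interior terms arising from the coupling matrices $A$ and $B^T$. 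The crucial simplification is that on $\Gamma$ the Robin condition replaces the normal derivative $\partial_\nu\Phi$ by $-B^T\Phi$, so every boundary occurrence of $\partial_\nu\Phi$ is absorbed into a constant multiple of $|\Phi|$ on $\Gamma$; thus the right-hand side of the multiplier identity is bounded by the single observation $\int_\Sigma|\Phi|^2$, which is exactly the quantity appearing in \eqref{obs}. The tangential-derivative term is handled by the usual boundary integration by parts valid under the $C^3$-regularity of $\Gamma$.

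The lower-order remainder terms, carrying factors of $A$ and $B$, are not dominated directly by the multiplier identity and must be absorbed by a compactness-uniqueness argument. Here I would argue by contradiction: if \eqref{obs} fails, there is a sequence of normalized adjoint solutions whose boundary observations tend to zero; using the regularity of Theorem \ref{Smooth} together with the continuous dependence recorded in its final statement, I extract a convergent subsequence and pass to a limit $\Phi$ satisfying the adjoint equations together with $\Phi|_\Sigma=0$ and $\partial_\nu\Phi|_\Sigma=0$, i.e. an overdetermined homogeneous problem; a Holmgren-type unique continuation then forces $\Phi\equiv 0$, contradicting normalization. This compactness step is where I expect the main obstacle, precisely because the adjoint system under Robin conditions lacks the hidden regularity of the Dirichlet case; the point of invoking Theorem \ref{Smooth} (and behind it the Lasiecka--Triggiani regularity of Section \ref{sec2}) is to supply enough trace regularity to make the limiting traces $\Phi|_\Sigma$ and $\partial_\nu\Phi|_\Sigma$ meaningful and the compact embedding effective.

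Finally, once \eqref{obs} is secured, the standard HUM construction gives, for arbitrary initial data $(\widehat U_0,\widehat U_1)\in(\mathcal H_1)^N\times(\mathcal H_0)^N$, a control $H\in L^2(0,T;(L^2(\Gamma))^N)$ driving the solution to the final state \eqref{20.1}; the control is obtained as $H=D^{-1}\Phi|_\Sigma$ for the adjoint solution minimizing the HUM functional, and the continuous dependence \eqref{con1} follows immediately from the coercivity constant $c$ in \eqref{obs} together with the invertibility of $D$.
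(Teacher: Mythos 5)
Your reduction via the duality \eqref{defi} is set at the wrong level, and the observability inequality you propose to prove,
\begin{equation*}
\|(\widehat\Phi_0,\widehat\Phi_1)\|^2_{(\mathcal H_1)^N\times(\mathcal H_0)^N}\;\leq\; c\int_0^T\!\!\int_\Gamma |\Phi|^2\,d\Gamma\,dt ,
\end{equation*}
is in fact false, so no multiplier/compactness--uniqueness argument can establish it. First, the scaling fails at high frequencies: already for $N=1$, $A=0$, $B=0$ (admissible in the theorem), take $\Phi=e_k(x)\cos(\sqrt{\lambda_k}\,t)$ with $e_k$ a Neumann eigenfunction of $-\Delta$ of eigenvalue $\lambda_k\to\infty$, normalized in $L^2(\Omega)$; the left-hand side grows like $\lambda_k$, while by the interpolation inequality recalled in the proof of Theorem \ref{welldual} the right-hand side is at most of order $T\sqrt{\lambda_k}$, so no constant $c$ can work. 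Second, by your own duality the displayed inequality is equivalent to exact null controllability for \emph{all} data in the larger space $(\mathcal H_0)^N\times(\mathcal H_{-1})^N$, which is impossible: by time reversibility and Theorem \ref{Smooth}, every state reachable from rest with an $L^2$ boundary control lies in $(H^{\alpha}(\Omega))^N\times(H^{\alpha-1}(\Omega))^N$, a proper subspace. The technical point your multiplier sketch misses is that the boundary terms of the identity are not only $|\partial_\nu\Phi|^2$, $|\nabla_\tau\Phi|^2$ and $|\Phi|^2$: the identity also produces $\tfrac12(m,\nu)|\Phi'|^2$ on $\Sigma$. The Robin condition does absorb $\partial_\nu\Phi$ into $|\Phi|$, but it does nothing to $|\Phi'|^2$, and this term is neither dominated by $\int_\Sigma|\Phi|^2$ nor compact relative to it --- it is exactly the high-frequency obstruction above, which Holmgren-type uniqueness cannot remove.

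What controllability of data in $(\mathcal H_1)^N\times(\mathcal H_0)^N$ (as stated in the theorem) actually dualizes to is the \emph{weakened} observability
\begin{equation*}
\|(\widehat\Phi_0,\widehat\Phi_1)\|^2_{(\mathcal H_0)^N\times(\mathcal H_{-1})^N}\;\leq\; c\int_0^T\!\!\int_\Gamma |\Phi|^2\,d\Gamma\,dt ,
\end{equation*}
i.e.\ the adjoint data measured one derivative lower; this is the inequality behind the Neumann result of Lemma \ref{controllableneu}, and proving it requires lifting the adjoint solution by one derivative (e.g.\ integrating in time) before applying the multiplier, in addition to a compactness--uniqueness step. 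The paper does not carry out any such estimate for the Robin adjoint problem; it transfers the Neumann result instead. Since $M=\mathrm{rank}(D)=N$ makes $D$ invertible, the solution $U$ of the Neumann problem \eqref{Bn}, \eqref{Bin} with the control $\widehat H$ of Lemma \ref{controllableneu} also solves the Robin problem \eqref{B} with the control $H=\widehat H+D^{-1}BU$, because $\partial_\nu U=D\widehat H$ can be rewritten as $\partial_\nu U+BU=D(\widehat H+D^{-1}BU)$. The only thing left to verify is that this $H$ is admissible, i.e.\ $U|_\Sigma\in (L^2(\Sigma))^N$, and that is precisely the trace regularity $U|_\Sigma\in(H^{2\alpha-1}(\Sigma))^N$ of Theorem \ref{Smooth} (applied with $B=0$) together with $2\alpha-1>0$; the continuous dependence \eqref{con1} then follows from \eqref{bound} and the continuity statement of Theorem \ref{Smooth}. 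So either adopt this transfer argument, or repair your HUM route by replacing your inequality with the weakened one; as written, your central estimate is unprovable because it is untrue.
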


\begin{proof}
We first consider the corresponding problem \eqref{Bn} and \eqref{Bin}. By Lemma \ref{controllableneu} and Remark \ref{1.1}, for any given initial data $(\widehat{U}_0, \widehat{U}_1)\in ({\cal H}_1 )^N\times ({\cal H}_0)^N$, there exists a boundary control $\widehat H \in  L_{loc}^2   (0, +\infty;    (L^2   (\Gamma))^N)$ with compact support in $[0,T]$, such that system \eqref{Bn}  with Neumann boundary controls is exactly   controllable at the time $T$,  and the boundary control  $\widehat{H}$  continuously depends on the initial data:
\begin{align}\label{bound}
\|\widehat{H}\|_{ L^2(0, T; (L^2(\Gamma))^N)}\leq c_1\| (\widehat{U}_0, \widehat{U}_1)\|_{({\cal H}_1 )^N\times ({\cal H}_0)^N},\end{align}
where $c_1>0$ is a positive constant.

Noting that $M=\text{rank}(D)=N$, $D$ is invertible and the boundary condition in system \eqref{Bn}\begin{equation}\label{newcontrol}\partial_\nu U  = D\widehat{H} \quad \text{on} ~(0,T)\times \Gamma \end{equation}
can be rewritten as
\begin{equation}\label{newconrobin}\partial_\nu U +BU=  D(\widehat{H}+D^{-1}BU) \overset{\text{def.}}{=}D H \quad \text{on} ~(0,T)\times \Gamma.\end{equation}
Thus, problem \eqref{Bn} and \eqref{Bin} with \eqref{newcontrol} can be equivalently regarded as problem \eqref{B}--\eqref{Bin} with \eqref{newconrobin}. In other words, the boundary control $H$ given by \begin{equation}\label{newconrobin5.5}
   H = \widehat{H}+D^{-1}BU   \quad \text{on} ~(0,T)\times \Gamma,\end{equation}
 where $U$ is the solution to problem  \eqref{Bn} and \eqref{Bin} with \eqref{newcontrol}, realizes  the exact boundary controllability of system \eqref{B}.

It remains to check that $H$ given by \eqref{newconrobin5.5} belongs to the control space $L^2   (0, T; (L^2 (\Gamma))^N)$ with  continuous dependence \eqref{con1}.
By the regularity result given in Theorem \ref{Smooth} (in which we take B=0), the trace $ U|_{\Sigma}\in (H^{2\alpha-1}(\Sigma))^N$, where $\alpha$ is defined by the first formula of  \eqref{alfarobin}. Since $2\alpha-1>0$,  we have $ H \in  L^2(0, T; (L^2(\Gamma))^N)$. Moreover, still by Theorem \ref{Smooth}, we have
\begin{align}\label{inner}
\|U\|_{ L^2(0, T; (L^2(\Gamma))^N)} \leq c_2 \big ( \|(\widehat{U}_0, \widehat{U}_1)\|_{({\cal H}_1 )^N\times ({\cal H}_0)^N}+\|\widehat{H}\|_{ L^2(0, T; (L^2(\Gamma))^N)} \big ),\end{align}where $c_2>0$ is another positive constant.
By the well-posedness theorem given in Theorem \ref{exist}, it is easy to see that system \eqref{B} is exactly controllable by the boundary control function $  H $. Moreover, noting \eqref{newconrobin5.5}, \eqref{con1} follows from \eqref{bound} and \eqref{inner}.  The proof is complete.
\end{proof}

\begin{rem}  \label{conner} The parallelepiped domain is only piecewise smooth, however,  the angles   between the corners  in a parallelepiped are all  equal to $\pi/2$, then, by Grisvard's results in  \cite{Gris} (see also p.534 of  \cite{Ala}),
the Laplacian $\Delta $ with Neumann boundary condition  and $L^2(\Omega)$ data has the $H^2(\Omega)$-regularity for a parallelepiped $\Omega\subset \mathbb R^n$ with $n\leqslant 3$. Therefore,  the  exact controllability Theorem 5.1 is still valid at least in this case.
\end{rem}

\subsection{Non-exact  boundary controllability}
Differently from   the case with Neumann boundary controls, the non-exact boundary controllability for the coupled system with coupled Robin boundary controls in a general domain  is still an open problem. Fortunately,  for some special domains, the  solution  to problem \eqref{B}--\eqref{Bin} may possess higher regularity. In particular, when $\Omega$ is  a parallelepiped, the optimal regularity of trace $U|_{\Sigma}$ almost reaches  $ (H^{\f{1}{2}}(\Sigma))^N$. This benefits a lot in the proof of the non-exact boundary controllability for the system with fewer boundary controls. We first give the following result of compactness, then we use it to prove the main Theorem \ref{uncontrollable} in this section that for  a parallelepiped $\Omega\subset\mathbb{R}^n$, 
if $M=\text{rank}(D) <N$, then system \eqref{B} is not exactly null controllable.


\begin{lemma} \label{Rnonbis}
Suppose that $\Omega \subset\mathbb{R}^n $ is a  parallelepiped.  Let ${\cal L}$ be a compact linear mapping from $L^2(\Omega)$ to $L^2(0, T; L^2(\Omega))$, and let
${\cal R}$ be a compact linear mapping from $L^2(\Omega)$ to $L^2(0, T; H^{1-\alpha}(\Gamma))$, where $\alpha$ is defined by \eqref{alfarobin}. Then, 
for any given $T>0$, there exists $\theta\in L^2(\Omega)$, such that  the solution to the following  
problem:
\begin{equation}\left\lbrace
\begin{array}{lll}
w''-\Delta w ={\cal L}\theta\qquad \quad & \hbox{in} \quad (0,T) \times\Omega,\\
\partial_\nu w= {\cal R}\theta  \quad \qquad   & \hbox{on} \quad (0,T) \times\Gamma,\\
t=0:\quad  w=0,\quad w'=\theta  \quad \qquad  & \hbox{in} ~\quad   \Omega \end{array}
\right.\label{c5}
\end{equation}
doesn't satisfy the  final condition
\begin{equation}
w(T)=w'(T)=0 \label{c6}.\end{equation}
 \end{lemma}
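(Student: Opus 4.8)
The plan is to argue by contradiction, exploiting the fact that on a parallelepiped the trace regularity given by Theorem \ref{Smooth} is good enough (with $\alpha=3/4-\epsilon$, so that $2\alpha-1>0$ and, more importantly, the solution map picks up genuine smoothing) that the map $\theta \mapsto (w(T),w'(T))$ becomes a compact operator on $L^2(\Omega)$; hence it cannot be onto, and in particular cannot coincide with a surjective reference map. Suppose, to the contrary, that for every $\theta \in L^2(\Omega)$ the solution of \eqref{c5} satisfies the final condition \eqref{c6}. I would first decompose the solution of \eqref{c5} as $w=w_0+w_1$, where $w_0$ solves the free problem $w_0''-\Delta w_0=0$ with $\partial_\nu w_0=0$ and the same initial data $(0,\theta)$, while $w_1$ carries the inhomogeneous source ${\cal L}\theta$ and boundary datum ${\cal R}\theta$ with zero initial data.

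The key point is the regularity budget. For $w_0$, Lemma \ref{Smoothneu3} (applied to the free Neumann evolution with $(w_0(0),w_0'(0))=(0,\theta)\in H^1\times L^2$ — or rather the appropriate version with one derivative less, giving $(w_0,w_0')\in C^0([0,T];L^2(\Omega)\times(H^1(\Omega))')$ when the data sit in $L^2\times (H^1)'$) shows that $\theta\mapsto (w_0(T),w_0'(T))$ is an \emph{isomorphism} of the energy space onto itself, because the free wave group is invertible and preserves energy norms. For $w_1$, I would use Lemma \ref{Smoothneu2} for the source term ${\cal L}\theta$ and Lemma \ref{Smoothneu} for the boundary term ${\cal R}\theta$: since ${\cal L}$ maps $L^2(\Omega)$ \emph{compactly} into $L^2(0,T;L^2(\Omega))$ and ${\cal R}$ maps $L^2(\Omega)$ \emph{compactly} into $L^2(0,T;H^{1-\alpha}(\Gamma))$, and since these data spaces embed continuously into the data classes of Lemmas \ref{Smoothneu2}--\ref{Smoothneu} (here $H^{1-\alpha}(\Gamma)\hookrightarrow L^2(\Gamma)$ is where the parallelepiped value $\alpha=3/4-\epsilon$ is exploited, so that $1-\alpha<1$ leaves a genuine gain), the composite map $\theta\mapsto (w_1(T),w_1'(T))$ factors through a compact operator and is therefore compact from $L^2(\Omega)$ into the energy space.

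Now I assemble the contradiction. The assumption \eqref{c6} reads $w_0(T)+w_1(T)=0$ and $w_0'(T)+w_1'(T)=0$ for all $\theta$, i.e. $G\theta = -K\theta$ as operators on the energy space, where $G=(w_0(T),w_0'(T))$ is the invertible free-group map and $K=(w_1(T),w_1'(T))$ is compact. Composing with $G^{-1}$ gives the operator identity $I = -G^{-1}K$ on $L^2(\Omega)$ (after projecting onto the relevant component), where $G^{-1}K$ is compact; but the identity on the infinite-dimensional space $L^2(\Omega)$ is not compact, which is absurd. Hence there must exist some $\theta\in L^2(\Omega)$ for which \eqref{c6} fails, which is exactly the assertion of the lemma.

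The step I expect to be the main obstacle is bookkeeping the regularity so that the compactness is actually captured, rather than merely boundedness. The subtlety is that Theorem \ref{Smooth} and the underlying Neumann lemmas only give fixed, noncritical Sobolev exponents; the compactness must come entirely from the hypotheses on ${\cal L}$ and ${\cal R}$, which is why the statement assumes them compact into the \emph{source/trace} spaces rather than merely bounded. I would need to verify carefully that the solution maps $L^2(0,T;L^2(\Omega))\to (w_1(T),w_1'(T))$ and $L^2(0,T;H^{1-\alpha}(\Gamma))\to (w_1(T),w_1'(T))$ are at least \emph{bounded} into the energy space of the free map (so that compact $\circ$ bounded $=$ compact), and that the free map $G$ really is an isomorphism on the right space — this is where the parallelepiped hypothesis, via $\alpha=3/4-\epsilon$ and the corresponding improved trace estimate, guarantees the target space of ${\cal R}$, namely $H^{1-\alpha}(\Gamma)$, is admissible boundary data for Lemma \ref{Smoothneu}.
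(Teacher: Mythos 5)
Your overall strategy (compact perturbation of an invertible map, leading to ``identity $=$ compact'' on an infinite-dimensional space) is the right spirit, but it breaks down exactly at the step you flag as the main obstacle, and in a way that cannot be repaired with the tools available. The boundary part of your perturbation $K\theta=(w_1(T),w_1'(T))$ is \emph{not} compact into the energy space $H^1(\Omega)\times L^2(\Omega)$ --- it is not even bounded there. Lemma \ref{Smoothneu} only puts the solution driven by Neumann boundary data $g\in L^2(\Sigma)$ into $C^0([0,T];H^{\alpha}(\Omega)\times H^{\alpha-1}(\Omega))$ with $\alpha=3/4-\epsilon<1$, and the embedding $H^{1-\alpha}(\Gamma)\hookrightarrow L^2(\Gamma)$ that you invoke gains nothing here, since Lemma \ref{Smoothneu} cannot convert purely spatial regularity of the boundary datum (no time regularity is supplied by ${\cal R}$) into energy-space regularity of the state. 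So the only space in which $K$ is compact is $H^{\alpha}(\Omega)\times H^{\alpha-1}(\Omega)$. But in that weaker space your contradiction evaporates: writing $(0,\theta)=-S(T)^{-1}K\theta$ and projecting onto the second component identifies the \emph{embedding} $L^2(\Omega)\hookrightarrow H^{\alpha-1}(\Omega)$ with a compact operator, and this embedding is itself compact (it is the adjoint of the Rellich embedding $H^{1-\alpha}(\Omega)\hookrightarrow L^2(\Omega)$), so no absurdity follows. Equivalently: the free map $G$ is bounded below by $\|\theta\|_{L^2(\Omega)}$ only when measured in $H^1\times L^2$, where $K$ is not compact, and is bounded below only by $\|\theta\|_{H^{\alpha-1}(\Omega)}$ in the space where $K$ is compact; an orthonormal sequence $\theta_n$ then satisfies both sides of your inequality tending to zero, with no contradiction.

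The paper's proof sidesteps this by never asking for regularity of the forward (inhomogeneous) problem at all. It runs a duality argument: let $\phi$ solve the \emph{free} homogeneous Neumann problem with data $(\phi,\phi')(0)=(\theta,0)$, so that Lemma \ref{Smoothneu3}$(2)$ gives $\|\phi\|_{L^2(0,T;L^2(\Omega))}\leq c\|\theta\|_{L^2(\Omega)}$ and the trace bound $\|\phi\|_{L^2(0,T;H^{\alpha-1}(\Gamma))}\leq c\|\theta\|_{L^2(\Omega)}$. Assuming \eqref{c6} holds for every $\theta$, multiplying \eqref{c5} by $\phi$ and integrating by parts (all boundary terms at $t=T$ vanish, the $t=0$ terms produce $\|\theta\|^2$, and $\partial_\nu\phi=0$ kills the other boundary term) yields
\begin{equation*}
\|\theta\|^2_{L^2(\Omega)}=\int_0^T\!\!\int_\Omega {\cal L}\theta\,\phi\,dx\,dt+\int_0^T\!\!\int_{\Gamma}{\cal R}\theta\,\phi\,d\Gamma\,dt ,
\end{equation*}
where the boundary integral is estimated through the duality pairing between $H^{1-\alpha}(\Gamma)$ and $H^{\alpha-1}(\Gamma)$ --- this is precisely why the hypothesis places ${\cal R}$ in $L^2(0,T;H^{1-\alpha}(\Gamma))$, a point your proposal misreads as an opportunity to feed smoother data into the forward solver. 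One obtains $\|\theta\|_{L^2(\Omega)}\leq c\bigl(\|{\cal L}\theta\|_{L^2(0,T;L^2(\Omega))}+\|{\cal R}\theta\|_{L^2(0,T;H^{1-\alpha}(\Gamma))}\bigr)$ for all $\theta$, which is incompatible with compactness of ${\cal L}$ and ${\cal R}$ (test with an orthonormal sequence). To fix your write-up you would essentially have to abandon the final-state operator identity and pass to this dual pairing, because the observability-type inequality, not operator invertibility, is what survives the weak $H^{\alpha}\times H^{\alpha-1}$ regularity.
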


\begin{proof}
For any given $\theta\in L^2(\Omega)$, by Lemma  \ref{Smoothneu3}, the following  problem
\begin{equation}\left\lbrace
\begin{array}{lll}
\phi''-\Delta \phi =0 \qquad \quad & \hbox{in}
\quad (0,T) \times\Omega,\\
\partial_\nu \phi= 0  \qquad \quad & \hbox{on} \quad (0,T) \times\Gamma,\\
t=0:\quad  \phi=\theta,\quad \phi'=0  \qquad \quad & \hbox{in} \quad   \Omega  \end{array}
\right.\label{c7}
\end{equation}
admits a unique solution   $\phi$.
By \eqref{2.57} and \eqref{2.58} in Lemma \ref{Smoothneu3}, 
we have
\begin{equation}\|\phi\|_{L^2(0, T; L^2(\Omega))} \leq c \|\theta\|_{L^2(\Omega)} \label{c8'}
\end{equation}
and 
\begin{equation} \|\phi\|_{L^2(0, T; H^{\alpha-1}(\Gamma))}\leq c\|\theta\|_{L^2(\Omega)},
\label{c8}\end{equation}
where $\alpha$ is given by \eqref{alfarobin}.

On the other hand, by Lemma \ref{Smoothneu} and Lemma \ref{Smoothneu2}, problem \eqref{c5} admits a unique solution $w$. Assume by contradiction  that \eqref{c6} holds.  Then taking the inner product with $\phi$ on both sides of \eqref{c5} and integrating by parts,   it is easy to get
\begin{equation}\|\theta\|^2_{L^2(\Omega)} = \int_0^T\int_\Omega {\cal L}\theta\phi dx + \int_0^T\int_{\Gamma}{\cal R}\theta\phi d\Gamma.\end{equation}
Noting \eqref{c8'}--\eqref{c8}, we then have
\begin{equation}\|\theta\|_{L^2(\Omega)} \leq c (\|{\cal L} \theta\|_{L^2(0, T; L^2(\Omega))} +\| {\cal R} \theta\|_{L^2(0, T; H^{1-\alpha}(\Gamma))})\end{equation}for all $\theta\in L^2(\Omega)$,
which contradicts the compactness of $ \cal L$ and $\cal R$. 
\end{proof}


\begin{thm} \label{uncontrollable}
Assume that $M=\text{rank}(D)<N$.  
Assume furthermore  that $\Omega \subset \rr^n$ is a parallelepiped. 
Then, no matter how large $T>0$ is, system \eqref{B} is not exactly null controllable in the space $  ({\cal H}_1 )^N \times ({\cal H}_0)^N$. 
\end{thm}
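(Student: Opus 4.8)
The plan is to prove Theorem \ref{uncontrollable} by contradiction, combining the exact controllability result of Theorem \ref{controllable} (applied in its natural reduced form) with the abstract compactness obstruction provided by Lemma \ref{Rnonbis}. Suppose system \eqref{B} were exactly null controllable in $(\mathcal H_1)^N\times(\mathcal H_0)^N$ at some time $T>0$ despite $M=\mathrm{rank}(D)<N$. The idea is to exploit the column-rank deficiency of $D$: since $D$ is an $N\times M$ full column-rank matrix with $M<N$, its range is a proper $M$-dimensional subspace of $\mathbb R^N$, so there exists a nonzero row vector $E\in\mathbb R^N$ (indeed an $(N-M)$-dimensional space of such vectors) with $E D=0$, i.e.\ $E$ annihilates the control term. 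I would project system \eqref{B} onto this direction by setting $w=(E,U)=\sum_k E_k u^{(k)}$, which kills the boundary control $DH$ and produces a scalar wave equation whose only inhomogeneities come from the coupling matrices $A$ and $B$ acting on the (already solved) full state $U$.

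The key computation is therefore to derive the scalar equation satisfied by $w=(E,U)$. Taking the inner product of the PDE in \eqref{B} with $E$ gives $w''-\Delta w = -(E,AU)$ in $(0,T)\times\Omega$, and contracting the boundary condition gives $\partial_\nu w + (E,BU) = (E,DH)=0$ on $(0,T)\times\Gamma$, which I rewrite as $\partial_\nu w = -(E,BU)$. The crucial structural point is that the right-hand sides are \emph{not} arbitrary functions of $w$ alone but are linear expressions in the full state $U$, which by Theorem \ref{exist} lives in $C^0([0,T];(\mathcal H_0)^N)$ and, by the improved regularity of Theorem \ref{Smooth} on a parallelepiped, has trace $U|_\Sigma\in(H^{2\alpha-1}(\Sigma))^N$ with $\alpha=3/4-\epsilon$. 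Thus the internal source defines a mapping $\theta\mapsto\mathcal L\theta=-(E,AU)$ into $L^2(0,T;L^2(\Omega))$ and the boundary source defines $\theta\mapsto\mathcal R\theta=-(E,BU)$ into $L^2(0,T;H^{2\alpha-1}(\Gamma))$, where $\theta$ encodes the initial data. I would arrange the argument so that $w$ solves precisely a problem of the form \eqref{c5}: this requires choosing initial data supported in the $E$-direction, namely $\widehat U_0=0$ and $\widehat U_1=\theta\, e_0$ for a suitable vector $e_0$ with $(E,e_0)=1$ (or more generally normalizing so that $w(0)=0$, $w'(0)=\theta$), so that $w$ satisfies $w(0)=0$, $w'(0)=\theta$ with sources $\mathcal L\theta$, $\mathcal R\theta$.

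The heart of the matter is to verify that $\mathcal L$ and $\mathcal R$ are \emph{compact}, since this is exactly the hypothesis Lemma \ref{Rnonbis} needs in order to produce a $\theta\in L^2(\Omega)$ for which the final condition \eqref{c6} fails — contradicting the assumed null controllability, which would force $w(T)=w'(T)=0$ for every admissible $\theta$. Compactness of $\mathcal L$ follows because $U$ gains regularity relative to its $L^2(\Omega)$-parametrized initial data: the solution map $\theta\mapsto U$ lands in a space strictly more regular than $L^2(0,T;L^2(\Omega))$, and the embedding back into $L^2(0,T;L^2(\Omega))$ is compact by Rellich's theorem. Compactness of $\mathcal R$ is the analogous but more delicate point: the trace $U|_\Sigma$ sits in $(H^{2\alpha-1}(\Sigma))^N$, and since the parallelepiped gives $2\alpha-1=1/2-2\epsilon$, one checks that $2\alpha-1>1-\alpha$ precisely because $3\alpha>2$, i.e.\ $\alpha>2/3$, which holds for $\alpha=3/4-\epsilon$ with $\epsilon$ small; this strict inequality yields a compact embedding $H^{2\alpha-1}(\Gamma)\hookrightarrow H^{1-\alpha}(\Gamma)$ in the spatial variable (again by Rellich), making $\mathcal R$ compact into $L^2(0,T;H^{1-\alpha}(\Gamma))$ as required. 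I expect this regularity bookkeeping — confirming that the improved parallelepiped exponent $\alpha=3/4-\epsilon$ is exactly strong enough to place the boundary source one derivative above the threshold $1-\alpha$ demanded by Lemma \ref{Rnonbis} — to be the main obstacle, and it is the reason the theorem is stated only for parallelepipeds rather than general smooth domains, where the weaker exponent $\alpha=3/5-\epsilon$ fails this strict inequality.
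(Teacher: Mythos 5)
Your proposal follows the same route as the paper's own proof: choose $e\neq 0$ with $D^Te=0$, take the special initial data $(\widehat U_0,\widehat U_1)=(0,e\theta)$, project the controlled solution onto $e$ so that $w=(e,U)$ solves problem \eqref{c5} with ${\cal L}\theta=-(e,AU)$ and ${\cal R}\theta=-(e,BU)|_\Sigma$, and then contradict Lemma \ref{Rnonbis} by proving compactness of ${\cal L}$ and ${\cal R}$; your exponent bookkeeping ($2\alpha-1>1-\alpha$ exactly when $\alpha>2/3$, true for $\alpha=3/4-\epsilon$ but not for $3/5-\epsilon$) is also the paper's. However, two steps, as you wrote them, do not go through.

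First, the compactness mechanism. You justify both compactness claims by spatial Rellich embeddings alone, and that is insufficient for operators into Bochner spaces: a bounded set of $L^2(0,T;H^{\alpha}(\Omega))$ is in general \emph{not} precompact in $L^2(0,T;L^2(\Omega))$ (consider $u_k(t,x)=\sin(kt)\varphi(x)$ with fixed $\varphi\in H^{\alpha}(\Omega)$), and likewise boundedness in $L^2(0,T;H^{2\alpha-1}(\Gamma))$ alone does not yield precompactness in $L^2(0,T;H^{1-\alpha}(\Gamma))$. Regularity in time is indispensable. The paper uses $(U,U')\in C^0([0,T];(H^{\alpha}(\Omega))^N\times(H^{\alpha-1}(\Omega))^N)$, hence boundedness in $L^2(0,T;(H^{\alpha}(\Omega))^N)\cap H^1(0,T;(H^{\alpha-1}(\Omega))^N)$, together with Lions' compact embedding theorem to get compactness of ${\cal L}$; and it uses the full anisotropic trace regularity $U|_\Sigma\in (H^{2\alpha-1}(\Sigma))^N$, i.e. $L^2(0,T;H^{2\alpha-1}(\Gamma))\cap H^{2\alpha-1}(0,T;L^2(\Gamma))$ componentwise, together with Simon's compactness theorem to get compactness of ${\cal R}$. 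You do record that the trace lies in $(H^{2\alpha-1}(\Sigma))^N$, so the needed time regularity is available in your setup, but the mechanism you invoke (Rellich in the space variable only) would not close the argument as stated.

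Second, boundedness of ${\cal L}$ and ${\cal R}$ themselves. Theorem \ref{Smooth} transfers regularity from the pair $(\theta,H)$ to $(U,U')$; to conclude that $\theta\mapsto{\cal L}\theta$ and $\theta\mapsto{\cal R}\theta$ are bounded linear maps on $L^2(\Omega)$ (a prerequisite for discussing their compactness), one must first fix a null control depending linearly and continuously on $\theta$. The paper makes this explicit: since system \eqref{B} with initial data \eqref{speini} is assumed exactly null controllable, the linear map $\theta\to H$ can be taken continuous from $L^2(\Omega)$ into $L^2(0,T;(L^2(\Gamma))^M)$, and only then does Theorem \ref{Smooth} give continuity of $\theta\mapsto(U,U')$ into $C^0([0,T];(H^{\alpha}(\Omega))^N)\cap C^1([0,T];(H^{\alpha-1}(\Omega))^N)$. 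Your write-up treats the solution map $\theta\mapsto U$ as landing boundedly in the regular space as if this were automatic; without this selection-and-continuity step for the control there is no bounded operator to which the compact embeddings can be applied.
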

\begin{proof}
Assume that $M=\text{rank}(D)<N$. Then there exists an $e\in \rr^N$, such that $D^Te=0$. Take the  special initial data
 \begin{equation} t=0:\quad U=0,\quad U'=e\theta \label{speini}\end{equation}
for system \eqref{B}.  Assume by contradiction  that the system is exactly   controllable  at the time $T >0 $. Then for any given $\theta\in L^2(\Omega)$, there exists a boundary control   $H \in  L^2(0, T, (L^2(\Gamma))^M)$, such that the corresponding solution satisfies
\begin{equation}U(T) = U'(T)=0.\label{0final}\end{equation}
Let
\begin{equation}w=(e, U),\qquad {\cal L}\theta = -(e, AU),\qquad {\cal R}\theta = -(e, BU)|_{\Sigma}.\end{equation}
Noting that $D^Te=0$, we see that $w$ satisfies problem \eqref{c5} and the final condition \eqref{c6}.

By Theorem \ref{Rnonbis}, in order to prove Theorem \ref{uncontrollable}, it suffices to show that the linear mapping  $ {\cal L}$ is  compact  from $L^2(\Omega)$ into $L^2(0, T; L^2(\Omega))$, and ${\cal R}$ is compact from $L^2(\Omega)$ into $H^{1-\alpha}(\Sigma)$, where $\alpha$ is given by \eqref{alfarobin}.

Since  system   \eqref{B}  with special initial data \eqref{speini} is exactly  null controllable, the linear mapping $\theta\rightarrow H$  is  continuous from $L^2   (\Omega)$ into $L^2   (0, T;    (L^2   (\Gamma))^M)$. By Theorem \ref{Smooth}, the   mapping $   (\theta, H) \rightarrow    (U, U') $ is  continuous  from $L^2   (\Omega) \times L^2   (0, T;    (L^2   (\Gamma))^M)$ into $C^0   ([0,T];    (H^{\alpha}   (\Omega))^N) \cap C^1   ([0,T];    (H^{\alpha-1}   (\Omega))^N)$. Besides, by Lions' compact embedding theorem  (Theorem 5.1 in \cite{Lions5}, p68),   the  following embedding
 $$L^2   (0, T;    (H^\alpha     (\Omega))^N)\cap H^1   (0, T;    (H^{\alpha-1}    (\Omega) )^N)\}\subset L^2   (0, T;    (L^2    (\Omega))^N)$$ is compact, hence the
linear mapping $\mathcal L$ is compact  from $L^2   (\Omega)$ into $L^2   (0, T;  L^2   (\Omega))$.

On the other hand, by \eqref{sm2} in Theorem \ref{Smooth}, $H\rightarrow U|_{\Sigma}$ is  continuous from $L^2(0, T; (L^2(\Gamma))^M)$ into $(H^{2\alpha-1}(\Sigma))^N$, then,  ${\cal R}:~\theta\rightarrow  -(e, BU)|_{\Sigma}$ is a continuous mapping from $L^2(\Omega)$ into $H^{2\alpha-1}(\Sigma)$.
When $\Omega$ is a parallelepiped,  $\alpha = 3/4-\epsilon$, then $2\alpha-1 >1-\alpha$. Hence, by  Simon's compact embedding result (Corollary 5 in \cite{Simon}, p86), the following embedding
$$H^{2\alpha-1}(\Sigma) =
L^2(0, T; H^{2\alpha-1}(\Gamma))\cap H^{2\alpha-1}(0, T; L^2(\Gamma))\subset L^2(0, T; H^{1-\alpha}(\Gamma))$$ is compact, therefore the mapping ${\cal R}$ is   compact from $L^2(\Omega)$ into $L^2(0, T; H^{1-\alpha}(\Gamma))$. The proof is complete.
\end{proof}

\begin{rem} \label{conner2}
We obtain the non-exact boundary controllability for system \eqref{B} with coupled Robin boundary controls in a parallelepiped  $\Omega$ when there is a  lack  of boundary controls.
The main idea is to use the  compact perturbation theory which  has a higher requirement on the regularity of the solution.  The improved  regularity  \eqref{sm1}--\eqref{sm2}  with $\alpha= 3/4-\epsilon$  for a parallelepiped domain of $\mathbb R^n$ is a consequence  of  Lasiecka-Triggiani's  sharp estimation for Neumann problem in \cite{Lasiecka2}.

On the other hand, the parallelepiped domain is only piecewise smooth.  However, by Remark \ref{conner}, 
the well-posedness Theorem \ref{Smooth}  and  the  exact controllability Theorem \ref{controllable} are still valid  for a parallelepiped $\Omega\subset \mathbb R^n$ with $n\leqslant 3$. Nevertheless,   since Theorem \ref{uncontrollable} takes 
the assumption that the system is  exactly controllable,   so, it is valid for all  parallelepiped  $\Omega\subset \mathbb R^n$  without any restriction on the dimension $n$.

In what follows,   all the results on the synchronization will be  established in  a smooth bounded domain, while the parallelepiped  domain  will be only used to examine  the necessity of compatibility conditions  (see \S7 below).    Theorem \ref{uncontrollable}   on the  non-exact controllability can be regarded as a start in this direction.  How to generalize this result to the   general domain is still an open problem.
\end{rem}

\section{Exact boundary synchronization by $p$-groups}\label{robingroup}
Based on the results of the exact boundary controllability and the non-exact boundary controllability, we continue to study the
exact boundary synchronization by $p$-groups  for system \eqref{B} with coupled Robin boundary controls. Theorem \ref{Snon2} will show that in order to obtain the exact boundary  synchronization by $p$-groups, we need at least $(N-p)$ boundary controls.

Let $p\geqslant  1$ be an integer and
\begin{equation} 0=n_0< n_1<n_2<\cdots<n_p=N\end{equation}
be  integers such that  $n_r-n_{r-1}\geqslant 2$ for $1\leqslant r \leqslant p$. We
re-arrange the components of the state variable $U$ into $p$ groups:
\begin{align}
(u^{(1)}, \cdots, u^{(n_{ 1})}),\quad  (u^{(n_{ 1}+1)}, \cdots, u^{(n_{2})}), \cdots, (u^{(n_{p-1}+1)}, \cdots, u^{(n_{p})}).
\end{align}

\begin{defn}\label{pdef}
System \eqref{B} is exactly   synchronizable by $p$-groups at the time $T>0$ in the space $({\cal H}_1)^N \times ({\cal H}_0)^N$, if for any given  initial data  $(\widehat{U}_0, \widehat{U}_1)\in ({\cal H}_1)^N \times ({\cal H}_0)^N$, there exists a boundary control $H \in (L_{loc}^2(0, +\infty ;L^2(\Gamma)))^M$ with compact support in $[0,T]$,  such that the corresponding solution $U=U(t,x)$ to   problem \eqref{B}--\eqref{Bin} satisfies
\begin{align}\label{synppp}
t\geq T: \quad u^{(i)}   =u_r (t,x),  
\qquad n_{r-1}+1\leq i \leq n_r,~ 1\leq r\leq p,
\end{align} 
where, $u=( u_1,\cdots, u_p)^T$, being unknown a priori, is called the corresponding exactly synchronizable state by $p$-groups.
\end{defn}

\begin{rem}
In particular, when $p = 1$, system  \eqref{B} is exactly  synchronizable. The following  theorems in this section  also work  in that case. 
\end{rem}





For the  given division $0=n_0<n_1<n_2<\cdots<n_p=N $, let $S_{r}$ be an $   (n_r-n_{r-1}-1)\times    (n_r-n_{r-1})$ full row-rank matrix:\begin{align}\label{cs}
 S_{r}=\left(\begin{array}{ccccc}
1&-1&&&\\
&1&-1&&\\
&&\ddots&\ddots&\\
&&&1&-1\end{array}\right) , \qquad 1\leq r\leq p,
\end{align}
and let $C_p$ be the following $(N-p)\times N$ matrix of synchronization by $p$-groups:
\begin{align}\label{c}
C_p= \left(\begin{array}{cccc}S_{1 } & & &\\
&S_{ 2}& &\\ & & \ddots & \\ & &  &S_{ p}  \end{array}\right).
\end{align} Evidently, we have
\begin{align}\label{kerc1}
\hbox{Ker}(C_p)=\hbox{Span}\{e_1,\cdots, e_p\}, \end{align}
where for $ 1\leq r\leq p$,
$$(e_r)_i=\begin{cases}
1,\qquad n_{r-1}+1 \leq i \leq   n_r , \\
0,\qquad \text{others}.
\end{cases}$$
Thus, the exact boundary synchronization by $p$-groups \eqref{synppp} can be equivalently written as
\begin{align}\label{8.6}
t\geq T: \quad C_p U \equiv 0 
 \end{align}or
\begin{align}\label{f1}
t\geq T: \quad U= \sum_{r=1}^p u_re_r.
 \end{align}

\begin{thm}\label{psugroup}
Assume that  $\Omega\subset  \mathbb{R}^n$ is a  smooth  bounded domain. 
Let $C_p$ be the $(N-p)\times N$ matrix of synchronization by $p$-groups defined by \eqref{cs}--\eqref{c}.  Assume that  both $A$ and $B$ satisfy the following conditions of $C_p$-compatibility: \begin{equation}\label{cpcom}
A \text{Ker}(C_p)\subseteq \text{Ker}(C_p),\qquad B \text{Ker}(C_p)\subseteq \text{Ker}(C_p).\end{equation}
Then there exists a boundary control matrix $D$ satisfying
 \begin{equation}
M=\hbox{rank} (D)=\hbox{rank} (C_pD)
=N-p,\end{equation}
such that system \eqref{B} is exactly   synchronizable by $p$-groups, and the corresponding boundary control   $H$ possesses the following continuous dependence:
\begin{align}\label{con22}
\|H\|_{L^2(0, T, (L^2(\Gamma))^{N-p})}\leq c\| C_p(\widehat{U}_0, \widehat{U}_1)\|_{(\mathcal H_1)^{N-p} \times (\mathcal H_0)^{N-p}},\end{align}
where $c>0$ is  a positive constant.
\end{thm}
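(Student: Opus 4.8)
The plan is to reduce the exact boundary synchronization by $p$-groups to the exact null controllability of a reduced system of $(N-p)$ wave equations, to which Theorem \ref{controllable} applies directly. The starting observation is that the conditions of $C_p$-compatibility \eqref{cpcom} say precisely that $\text{Ker}(C_p)$ is invariant under both $A$ and $B$, equivalently $\text{Ker}(C_p)\subseteq\text{Ker}(C_pA)$ and $\text{Ker}(C_p)\subseteq\text{Ker}(C_pB)$. Since $C_p$ is surjective, there exist unique $(N-p)\times(N-p)$ matrices $\bar A,\bar B$ with
\begin{equation}
C_pA=\bar A C_p,\qquad C_pB=\bar B C_p.
\end{equation}
Setting $W=C_pU$ and applying $C_p$ to \eqref{B}, the synchronization variable solves the reduced Robin problem
\begin{equation}
\begin{cases}
W''-\Delta W+\bar A W=0 & \hbox{in}\quad (0,+\infty)\times\Omega,\\
\partial_\nu W+\bar B W=C_pD\,H & \hbox{on}\quad (0,+\infty)\times\Gamma,
\end{cases}
\end{equation}
with initial data $(C_p\widehat U_0,C_p\widehat U_1)$. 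In view of \eqref{8.6}, exact synchronization by $p$-groups is equivalent to $W\equiv 0$ for $t\geq T$, i.e.\ to the exact null controllability of this reduced $(N-p)$-dimensional system.

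First I would fix the control matrix $D$ to be a right inverse of $C_p$, for instance $D=C_p^T(C_pC_p^T)^{-1}$, so that $C_pD=I_{N-p}$. Then $\text{rank}(C_pD)=N-p$ and, since $D$ is $N\times(N-p)$, also $\text{rank}(D)=N-p$, which gives the required $M=\text{rank}(D)=\text{rank}(C_pD)=N-p$. With this choice the reduced boundary condition reads $\partial_\nu W+\bar B W=H$, so the reduced system is a coupled system of $(N-p)$ wave equations with coupled Robin boundary controls whose control matrix is the identity, hence of full rank equal to the number $N-p$ of reduced state variables.

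The key structural point, which I expect to be the main obstacle, is to check that the reduced matrix $\bar B$ still falls within the well-posedness and regularity framework of Sections \ref{sec2}--\ref{sec1} onward, namely that $\bar B$ is \emph{similar to a real symmetric matrix}. By the standing assumption $B$ is similar to a real symmetric matrix, hence diagonalizable over $\rr$ with real eigenvalues. For a diagonalizable operator every invariant subspace splits as the direct sum of its intersections with the eigenspaces; applied to the $B$-invariant subspace $\text{Ker}(C_p)$, this shows that the operator induced by $B$ on $\rr^N/\text{Ker}(C_p)$ is again diagonalizable with real eigenvalues. Since $C_p$ realizes the isomorphism $\rr^N/\text{Ker}(C_p)\cong\rr^{N-p}$ intertwining this induced operator with $\bar B$ (because $C_pB=\bar B C_p$), I conclude that $\bar B$ is diagonalizable over $\rr$ with real eigenvalues, i.e.\ similar to a real diagonal, hence symmetric, matrix. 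Consequently Theorems \ref{welldual}, \ref{exist} and \ref{Smooth} all apply verbatim to the reduced system.

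Finally I would invoke the exact controllability Theorem \ref{controllable} for the reduced $(N-p)$-dimensional system: there is a time $T>0$ and a control $H\in L^2(0,T;(L^2(\Gamma))^{N-p})$, extended by zero to $(0,+\infty)$ and thus of compact support in $[0,T]$, steering $W$ to $W(T)=W'(T)=0$, together with the continuous dependence \eqref{con1} in the form $\|H\|_{L^2(0,T;(L^2(\Gamma))^{N-p})}\leq c\|(C_p\widehat U_0,C_p\widehat U_1)\|_{(\mathcal H_1)^{N-p}\times(\mathcal H_0)^{N-p}}$. Using this same $H$ in the full system \eqref{B}, the function $C_pU$ solves the reduced problem with control $C_pD\,H=H$ and the same initial data, so by the uniqueness part of Theorem \ref{exist} we have $C_pU=W$; since $W(T)=W'(T)=0$ and $H$ is switched off after $T$, uniqueness forces $W\equiv 0$ for $t\geq T$. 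This is exactly \eqref{8.6}, and the displayed inequality is precisely \eqref{con22}, which completes the argument. Apart from the symmetry inheritance for $\bar B$, every remaining step is a direct transcription of the controllability and regularity results already established.
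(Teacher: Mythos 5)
Your proposal is correct and follows essentially the same route as the paper: reduce via $W=C_pU$ to an $(N-p)$-dimensional Robin system using $C_pA=\overline A_pC_p$, $C_pB=\overline B_pC_p$, choose $D$ so that $C_pD$ is invertible, and invoke Theorem \ref{controllable} together with \eqref{con1} for the reduced system. Your explicit choice $D=C_p^T(C_pC_p^T)^{-1}$ is a particular instance of the paper's choice $\hbox{Ker}(D^T)=\hbox{Ker}(C_p)$ (it has $\hbox{Im}(D)=\hbox{Im}(C_p^T)$), with the cosmetic advantage that $C_pD=I_{N-p}$, so the paper's appeal to Lemma 2.2 of \cite{Rao8} becomes unnecessary. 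The one point where your argument genuinely diverges is the symmetry inheritance for $\overline B_p$: the paper proves this as a separate statement (Lemma \ref{sym}) by an explicit computation, deriving $\overline B_p=C_pP\widehat B P^TC_p^T(C_pPP^TC_p^T)^{-1}$ and exhibiting the conjugating matrix $(C_pPP^TC_p^T)^{-1/2}$, whereas you argue abstractly that similarity to a real symmetric matrix is equivalent to real diagonalizability, which passes to the operator induced on the quotient $\mathbb R^N/\hbox{Ker}(C_p)\cong\mathbb R^{N-p}$ intertwined with $\overline B_p$ by $C_p$; both arguments are valid, the paper's being constructive and yours more conceptual.
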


\begin{proof}
Since   both $A$ and $B$ satisfy the conditions of $C_p$-compatibility \eqref{cpcom}, by Lemma 3.3 in \cite{Wei},  there exist matrices $\overline  A_p$ and $ \overline B_p$ of order  $(N-p)$, such that
\begin{equation}\label{acp}C_pA =\overline A_pC_p,\qquad C_pB =\overline B_pC_p.\end{equation}
Applying $C_p$ to problem  \eqref{B}--\eqref{Bin}
and defining
\begin{equation}W=C_pU,\qquad \overline D_p=C_pD,\end{equation}we have
\begin{equation}\left\lbrace
\begin{array}{lll}W''-{\Delta} W+\overline A_pW=0\qquad  & \hbox{in} \quad (0,+\infty) \times \Omega,\\
\partial_\nu W + \overline B_pW = \overline D_pH \hskip1.0cm & \hbox{on} \quad  (0,+\infty) \times \Gamma
\end{array}
\right.\label{c99}
\end{equation}
with the initial data:
\begin{align}\label{c99ini}
t=0:\quad W=C_p\widehat{U}_0,\quad W'=C_p\widehat{U}_1  \quad  \hbox{in} \quad   \Omega.\end{align}
Since $C_p$ is a surjection from $\rr^N$ to $\rr^{N-p}$, 
the exact boundary synchronization by $p$-groups for system \eqref{B} is equivalent to  the exact boundary   controllability for the reduced system \eqref{c99}, and the boundary control $H$, which realizes the exact boundary   controllability for the reduced system \eqref{c99}, must be   the boundary control which realizes the exact boundary synchronization by $p$-groups for system \eqref{B}.

Let $D$  be defined by
\begin{equation}\label{8.18}
\text{Ker}(D^T)=\hbox{Span}\{e_1,\cdots, e_p\}=\text{Ker}(C_p).
\end{equation}
We have $M=\hbox{rank} (D)=N-p$, and
\begin{align}
\text{Ker}(C_p) \cap \text{Im}(D)=\text{Ker}(C_p) \cap \{\text{Ker}(C_p)\}^{\perp}=\{0\}.\end{align}
By  Lemma 2.2 in \cite{Rao8},  we get $\hbox{rank} (C_pD)=\hbox{rank} (D)=M=N-p$, thus $\overline D_p$ is an invertible matrix of order $(N-p)$. By Theorem \ref{controllable}, the reduced system \eqref{c99} is exactly null controllable, then system \eqref{B} is exactly synchronizable by $p$-groups. By \eqref{con1}, we get \eqref{con22}.
\end{proof}

\begin{rem}
Noting Theorem \ref{controllable}, it is easy to check from the above proof that as long as \eqref{6.8} holds, system \eqref{B} must be  exactly synchronizable by $p$-groups under the assumptions of Theorem \ref{psugroup}. Noticing \eqref{6.9}, in fact,  Theorem \ref{psugroup} gives a way to find the boundary control matrix $D$ with minimum rank, but it is not the unique way.
\end{rem}

The well-posedness of the reduced problem \eqref{c99}--\eqref{c99ini} is guaranteed by the following

\begin{lemma}\label{sym}
If $B$ is similar to a symmetric matrix and  satisfies the condition of $C_p$-compatibility, then the reduced matrix $\overline  B_p$ of $B$, given by \eqref{acp},
is also similar to a symmetric matrix.
\end{lemma}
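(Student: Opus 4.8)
The plan is to combine the invariant-subspace structure behind the reduction $C_pB=\overline B_pC_p$ with the inner-product characterization of matrices that are similar to a symmetric matrix. The characterization I would use is this: a real matrix $B$ is similar to a symmetric matrix if and only if there is a symmetric positive definite matrix $\Theta$ with $\Theta B=B^T\Theta$, i.e. $B$ is self-adjoint for the inner product $(x,y)_\Theta=x^T\Theta y$. Indeed, writing $B=P^{-1}SP$ with $S=S^T$ one checks that $\Theta=P^TP$ does the job, and conversely a factorization $\Theta=R^TR$ turns $RBR^{-1}$ into a symmetric matrix. So I would start from such a $\Theta$ attached to the given $B$.

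Next I would exploit the condition of $C_p$-compatibility \eqref{cpcom}, which says precisely that $V:=\mathrm{Ker}(C_p)=\mathrm{Span}\{e_1,\dots,e_p\}$ (recall \eqref{kerc1}) is invariant under $B$. Since $B$ is $\Theta$-self-adjoint and $V$ is $B$-invariant, its $\Theta$-orthogonal complement $V^{\perp_\Theta}$ is $B$-invariant as well: for $x\in V^{\perp_\Theta}$ and $v\in V$ one has $(Bx,v)_\Theta=(x,Bv)_\Theta=0$, because $Bv\in V$ and $x\perp_\Theta V$. Hence $\mathbb{R}^N=V\oplus V^{\perp_\Theta}$ with both summands $B$-invariant, and the restriction $B|_{V^{\perp_\Theta}}$ is self-adjoint for the restricted inner product $\Theta|_{V^{\perp_\Theta}}$, so it is itself similar to a symmetric matrix.

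The linking step is that $\overline B_p$ is similar to $B|_{V^{\perp_\Theta}}$. Because $C_p$ is a surjection onto $\mathbb{R}^{N-p}$ with kernel $V$ and $\dim V^{\perp_\Theta}=N-p$, the restriction $\iota:=C_p|_{V^{\perp_\Theta}}\colon V^{\perp_\Theta}\to\mathbb{R}^{N-p}$ is an isomorphism. Restricting the intertwining relation \eqref{acp} to $V^{\perp_\Theta}$—which is legitimate since $B$ maps $V^{\perp_\Theta}$ into itself—gives $\iota\,(B|_{V^{\perp_\Theta}})=\overline B_p\,\iota$, that is $\overline B_p=\iota\,(B|_{V^{\perp_\Theta}})\,\iota^{-1}$. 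Thus $\overline B_p$ is similar to $B|_{V^{\perp_\Theta}}$, which is similar to a symmetric matrix; by transitivity of similarity, $\overline B_p$ is similar to a symmetric matrix, which is the assertion.

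I expect the only delicate point to be the passage through the $\Theta$-orthogonal complement rather than the standard one: since $B$ need not be symmetric, the standard orthogonal complement of $V$ need not be $B$-invariant, so it is essential to build the complement from the very inner product that makes $B$ self-adjoint. Everything else—the characterization of being similar to a symmetric matrix, the $B$-invariance of $V^{\perp_\Theta}$, and the restriction of \eqref{acp}—is routine once $\Theta$ has been fixed.
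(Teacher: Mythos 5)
Your proof is correct, and every step checks out: the characterization of matrices similar to a symmetric one via a symmetric positive definite $\Theta$ with $\Theta B=B^T\Theta$, the $B$-invariance of $V^{\perp_\Theta}$, the bijectivity of $\iota=C_p|_{V^{\perp_\Theta}}$, and the intertwining relation $\iota\,(B|_{V^{\perp_\Theta}})=\overline B_p\,\iota$. It is, however, organized quite differently from the paper's proof, which is a two-line matrix computation: writing $B=P\widehat BP^{-1}$ with $\widehat B$ symmetric and multiplying $C_pB=\overline B_pC_p$ on the right by $PP^TC_p^T$, the paper obtains $C_pP\widehat BP^TC_p^T=\overline B_p\,(C_pPP^TC_p^T)$, i.e. $\overline B_p=SM^{-1}$ with $S$ symmetric and $M=C_pPP^TC_p^T$ symmetric positive definite (this uses only the full row rank of $C_p$), so that $M^{-1/2}\overline B_pM^{1/2}=M^{-1/2}SM^{-1/2}$ is symmetric. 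The two arguments are secretly the same computation in different languages: your $\Theta$ is $(PP^T)^{-1}$ in the paper's convention, your $\iota^{-1}$ is explicitly the matrix $PP^TC_p^TM^{-1}$, and the paper's $M$ is the Gram matrix of the inner product that your isomorphism transports to $\mathbb{R}^{N-p}$. What each buys is different: the paper's route is shorter and produces an explicit conjugating matrix and an explicit symmetric representative of $\overline B_p$; your invariant-subspace route explains \emph{why} the lemma holds ($\overline B_p$ is the operator induced by $B$ on a $B$-invariant complement of $\mathrm{Ker}(C_p)$, hence inherits self-adjointness) and makes clear that nothing about the specific structure of $C_p$ matters beyond its full row rank and the invariance of its kernel under $B$.
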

\begin{proof}
Since  $B$ is similar to a symmetric matrix,   there exists  a symmetric matrix $\widehat{B}$ and an invertible matrix $P$  such that $B=P \widehat{B} P^{-1}$. By the second formula of \eqref{acp},
 we have
\begin{align} 
 C_pBPP^TC_p^T
=\overline B_pC_pPP^TC_p^T.
\end{align}
Hence we get 
\begin{align}\overline B_p=C_pP \widehat{B}  P^TC_p^T(C_pPP^TC_p^T)^{-1}, \end{align}
 which is similar to the symmetric matrix
\begin{align}(C_pPP^TC_p^T)^{-\f{1}{2}} C_pP \widehat{B} P^TC_p^T(C_pPP^TC_p^T)^{-\f{1}{2}}.\end{align}
The proof is complete.\end{proof}

\section{Necessity of  the conditions of $C_p$-compatibility}\label{robincompatible}

In this section, we will discuss the necessity of the conditions of $C_p$-compatibility. This  problem is  closely related to the number of applied boundary controls.  The consideration  will be based on Theorem \ref{uncontrollable}, therefore,  in this section $\Omega$ is  a parallelepiped.

\subsection{Condition of $C_p$-compatibility for the internal coupling matrix $A$}

\begin{thm} \label{Snon2}Let $\Omega \subset \rr^n$ be a parallelepiped. 
Assume that system \eqref{B} is  exactly synchronizable by $p$-groups. Then   we  have
\begin{equation}\label{6.8}\hbox{rank}(C_pD)= N-p.\end{equation}
In particular,  we have
\begin{equation}M= \hbox{rank}(D)\geq N-p.\label{6.9}\end{equation}
\end{thm}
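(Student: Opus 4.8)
\medskip

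The plan is to argue by contraposition, using the non-exact controllability result (Theorem \ref{uncontrollable}) as the central engine. The key observation is that exact synchronization by $p$-groups is, after applying $C_p$, equivalent to exact null controllability of the reduced system \eqref{c99}, whose boundary control matrix is $\overline D_p=C_pD$. So the whole question reduces to understanding when that reduced system can be exactly controllable.

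\medskip

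First I would set up the reduction exactly as in the proof of Theorem \ref{psugroup}. Suppose that system \eqref{B} is exactly synchronizable by $p$-groups in the sense of Definition \ref{pdef}. By \eqref{8.6}, synchronization means $C_pU\equiv 0$ for $t\geq T$, i.e. the reduced variable $W=C_pU$ satisfies $W(T)=W'(T)=0$. Since $C_p$ is a surjection from $\rr^N$ onto $\rr^{N-p}$, the initial data $W(0)=(C_p\widehat U_0,C_p\widehat U_1)$ range over all of $(\mathcal H_1)^{N-p}\times(\mathcal H_0)^{N-p}$ as $(\widehat U_0,\widehat U_1)$ range over $(\mathcal H_1)^N\times(\mathcal H_0)^N$. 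Hence exact synchronizability of \eqref{B} forces the reduced system \eqref{c99}, with control matrix $\overline D_p=C_pD$, to be exactly null controllable in $(\mathcal H_1)^{N-p}\times(\mathcal H_0)^{N-p}$.

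\medskip

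The heart of the argument is now a dimension count via Theorem \ref{uncontrollable}, applied to the reduced system. The reduced system is a coupled system of $N-p$ wave equations with coupled Robin boundary controls on the parallelepiped $\Omega$, with control matrix $\overline D_p$ of size $(N-p)\times M$. Theorem \ref{uncontrollable} says that if $\hbox{rank}(\overline D_p)<N-p$, then this system is \emph{not} exactly null controllable, no matter how large $T$ is. Since we have just shown the reduced system \emph{is} exactly null controllable, we must have $\hbox{rank}(C_pD)=\hbox{rank}(\overline D_p)=N-p$, which is exactly \eqref{6.8}. The inequality \eqref{6.9} then follows immediately: because $C_pD$ is obtained from $D$ by left multiplication, $\hbox{rank}(D)\geq \hbox{rank}(C_pD)=N-p$, giving $M=\hbox{rank}(D)\geq N-p$.

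\medskip

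The main obstacle I anticipate is a bookkeeping/regularity point rather than a conceptual one: to invoke Theorem \ref{uncontrollable} for the reduced system I must make sure the reduced problem \eqref{c99} genuinely falls under the hypotheses of the well-posedness and regularity machinery — in particular that its reduced boundary coupling matrix $\overline B_p$ is similar to a real symmetric matrix, so that Theorem \ref{exist} and Theorem \ref{Smooth} apply. This is precisely what Lemma \ref{sym} supplies, provided the original $B$ satisfies the condition of $C_p$-compatibility. Strictly speaking the statement of Theorem \ref{Snon2} as written only assumes synchronizability, so I would either carry along the standing hypothesis on $B$ (similarity to a symmetric matrix plus $C_p$-compatibility, which is what guarantees \eqref{acp} and hence the very existence of $\overline B_p$), or note that the argument is really the contrapositive of Theorem \ref{psugroup}'s controllability step. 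Modulo this hypothesis-tracking, no further calculation is needed: the result is a clean consequence of the reduction together with the already-proved non-exact controllability theorem on the parallelepiped.
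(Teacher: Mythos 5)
Your reduction to the closed reduced system \eqref{c99} is exactly where the argument breaks down: that system only exists if both $A$ and $B$ satisfy the conditions of $C_p$-compatibility \eqref{cpcom}, since otherwise there are no matrices $\overline A_p$, $\overline B_p$ with $C_pA=\overline A_pC_p$ and $C_pB=\overline B_pC_p$, and applying $C_p$ to \eqref{B} yields $W''-\Delta W+C_pAU=0$, $\partial_\nu W+C_pBU=C_pDH$, where the terms $C_pAU$ and $C_pBU$ cannot be expressed through $W=C_pU$ alone. Theorem \ref{Snon2} assumes only exact synchronizability by $p$-groups --- no compatibility condition at all --- so your proof, even with the ``hypothesis-tracking'' you propose at the end, establishes a strictly weaker statement. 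Worse, adding $C_p$-compatibility as a hypothesis defeats the theorem's role in the paper: Theorem \ref{Snon2} and its method of proof are precisely the tools Section 7 uses to establish the \emph{necessity} of those compatibility conditions (Theorems \ref{pne} and \ref{compacity/B}), so assuming them here would make the subsequent development circular. This is a genuine gap, not a bookkeeping issue.

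The paper's proof avoids the reduced system altogether, and this is the essential idea you are missing. It splits into two cases according to whether $\hbox{Ker}(D^T)\cap\hbox{Im}(C_p^T)=\{0\}$. If yes, pure linear algebra (Lemma 2.2 in \cite{Rao8}) already gives $\hbox{rank}(C_pD)=\hbox{rank}(C_p^T)=N-p$. If not, one picks $E\neq 0$ with $D^TC_p^TE=0$ and considers the \emph{scalar} function $w=(E,C_pU)$ with the special initial data \eqref{speini}: because $D^TC_p^TE=0$, the control drops out of the boundary condition for $w$, and $w$ solves the Neumann problem \eqref{c5} in which the coupling terms $-(E,C_pAU)$ and $-(E,C_pBU)$ are never required to close up into a system in $C_pU$ --- they are simply treated as compact operators ${\cal L}\theta$ and ${\cal R}\theta$ of the initial datum $\theta$, the compactness coming from the regularity Theorem \ref{Smooth} together with the continuous dependence of the synchronizing control on $\theta$. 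Synchronization gives $C_pU\equiv 0$ for $t\geq T$, hence $w(T)=w'(T)=0$, contradicting Lemma \ref{Rnonbis}. This is the same compact-perturbation engine you wanted to invoke, but applied at the scalar level rather than through Theorem \ref{uncontrollable} as a black box, and that is exactly what makes the compatibility conditions unnecessary; if you want to salvage your write-up, replace the reduction by this case split.
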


\begin{proof}
 If $\text{Ker}(D^T)\cap \text{Im}(C_p^T)  =\{0\}$,  by Lemma 2.2 in \cite{Rao8}, we have \begin{equation}\hbox{rank}(C_pD)=\hbox{rank}(D^TC_p^T)=\hbox{rank}( C_p^T)=  N-p.\end{equation}
Next, we prove that it is impossible to have $\text{Ker}(D^T)\cap \text{Im}(C_p^T)\not =\{0\}$. Otherwise, there exists a a vector $E \not= 0$, such that
\begin{equation}D^TC_p^TE=0.\end{equation}

Since system \eqref{B} is  exactly synchronizable by $p$-groups, taking the  special initial data \eqref{speini} for any given $\theta\in L^2(\Omega)$, the solution $U$  to problem \eqref{B} and \eqref{speini}   satisfies \eqref{synppp} (or \eqref{8.6}) under  boundary control $H \in  L^2(0, T, (L^2(\Gamma))^M)$.
Let
\begin{equation}w=(E, C_pU),\qquad {\cal L}\theta =-(E, C_pAU),\qquad {\cal R}\theta =-(E, C_pBU).\end{equation}
We easily get
\begin{align}
w(T)
=w'(T)
= 0.
\end{align}
Thus, we get again problem \eqref{c5} for  $w$. Besides, the exact boundary synchronization by $p$-groups for system \eqref{B} indicates that the final condition \eqref{c6} holds.  We then get a contradiction to Lemma \ref{Rnonbis}.
\end{proof}

We  have the following theorem on the condition of $C_p$-compatibility for the coupling matrix $A$:
\begin{thm}\label{pne}
Let $\Omega \subset \rr^n$ be a parallelepiped.   Assume that $ M= \text{rank}(D)  = N-p$.  If system \eqref{B} is exactly synchronizable by $p$-groups, then the coupling matrix $A=(a_{ij})$ should  satisfy the following condition of $C_p$-compatibility
\begin{align}\label{comppp}
A\text{Ker}(C_p) \subseteq \text{Ker}(C_p).
\end{align}
\end{thm}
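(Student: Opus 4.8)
\textbf{Proof proposal for Theorem \ref{pne}.}

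The plan is to argue by contradiction: I would assume that the condition of $C_p$-compatibility \eqref{comppp} fails while the system is exactly synchronizable by $p$-groups with $M = \text{rank}(D) = N-p$, and derive a contradiction with the non-exact controllability machinery of Lemma \ref{Rnonbis}. First I would note that since the system is exactly synchronizable by $p$-groups, Theorem \ref{Snon2} applies and gives $\text{rank}(C_pD) = N-p = M = \text{rank}(D)$; in particular $D^TC_p^TE = 0$ forces $E=0$, which tells us that $\text{Ker}(D^T)\cap\text{Im}(C_p^T) = \{0\}$. Equivalently, $C_pD$ is a full (column or row) rank $(N-p)\times(N-p)$ matrix, hence invertible. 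This invertibility is the structural fact I would exploit throughout.

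Next I would suppose, for contradiction, that \eqref{comppp} fails, i.e.\ $A\,\text{Ker}(C_p)\not\subseteq\text{Ker}(C_p)$. Since $\text{Ker}(C_p) = \text{Span}\{e_1,\dots,e_p\}$ by \eqref{kerc1}, this means there is some $e_r$ with $C_pAe_r \neq 0$. The idea is to choose special synchronized initial data so that, after synchronization occurs, the reduced system $W = C_pU$ inherits a homogeneous final condition while the coupling of $A$ out of $\text{Ker}(C_p)$ injects a genuine source term. Concretely, I would take initial data of the form \eqref{speini} adapted to a vector lying in $\text{Ker}(C_p)$, apply $C_p$ to system \eqref{B}, and track the reduced equation. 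Because $C_pA$ need not factor through $C_p$ when \eqref{comppp} fails, the term $C_pAU$ cannot in general be written as $\overline A_p C_p U$, and this residual is exactly what produces the compact perturbation $\mathcal L\theta = -(E, C_pAU)$ (and similarly $\mathcal R\theta = -(E,C_pBU)$) feeding into problem \eqref{c5}.

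The key step is then to verify that, with $w = (E, C_pU)$ for a suitable fixed $E$, $w$ solves a problem of the form \eqref{c5} and satisfies the homogeneous final condition \eqref{c6}, while $\mathcal L$ and $\mathcal R$ are compact mappings on the relevant spaces. Compactness of $\mathcal L$ from $L^2(\Omega)$ into $L^2(0,T;L^2(\Omega))$ and of $\mathcal R$ into $L^2(0,T;H^{1-\alpha}(\Gamma))$ follows exactly as in the proof of Theorem \ref{uncontrollable}, using the improved trace regularity \eqref{sm1}--\eqref{sm2} of Theorem \ref{Smooth} with $\alpha = 3/4-\epsilon$ together with the Lions and Simon compact-embedding results. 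Applying Lemma \ref{Rnonbis} then yields some $\theta\in L^2(\Omega)$ for which \eqref{c6} fails, contradicting synchronization.

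I expect the main obstacle to be the bookkeeping that converts the failure of \eqref{comppp} into a clean contradiction: I must choose the vector $E$ and the initial data so that $w = (E,C_pU)$ genuinely satisfies the homogeneous Neumann-type reduced problem \eqref{c5}--\eqref{c6}, which requires using the invertibility of $C_pD$ (from Theorem \ref{Snon2}) to absorb the control term $C_pDH$, and then showing that the leftover coupling from $A$ acting outside $\text{Ker}(C_p)$ cannot be compensated. Establishing that this leftover is nontrivial precisely when \eqref{comppp} fails, and that it enters only through the compact operator $\mathcal L$ and not as a coercive term, is the delicate point; once that is set up, the contradiction with Lemma \ref{Rnonbis} is immediate.
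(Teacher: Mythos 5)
There is a genuine gap, and it sits exactly at the step you yourself flag as ``the delicate point.'' For $w=(E,C_pU)$ to solve a problem of the form \eqref{c5}, the boundary control must disappear from the boundary condition: $\partial_\nu w=(E,C_p\partial_\nu U)=(D^TC_p^TE,H)-(E,C_pBU)$, so you need $D^TC_p^TE=0$ with $E\neq 0$, i.e.\ a nonzero vector in $\text{Ker}(D^T)\cap \text{Im}(C_p^T)$. But the very first observation of your proposal (via Theorem \ref{Snon2}) is that this intersection is $\{0\}$, so no such $E$ exists and your $w$ cannot be constructed. Your fallback --- ``using the invertibility of $C_pD$ to absorb the control term $C_pDH$'' --- cannot repair this: Lemma \ref{Rnonbis} requires the data of problem \eqref{c5} to be \emph{compact} functions of $\theta$, whereas the map $\theta\mapsto H$ is only bounded, so any formulation that retains $H$ is outside the scope of the lemma; worse, the reduced system $W=C_pU$ with invertible $\overline D_p=C_pD$ is exactly null controllable (this is precisely how Theorem \ref{psugroup} is proved), so no contradiction can ever be extracted at the level of $C_pU$ alone. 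You also misattribute the role of the failure of \eqref{comppp}: compactness of $\mathcal L$ and $\mathcal R$ is a pure regularity fact (Theorem \ref{Smooth} plus the Lions/Simon embeddings, identical to Theorem \ref{uncontrollable}) and has nothing to do with whether $C_pA$ factors through $C_p$.

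The missing idea is that the failure of \eqref{comppp} must be converted into an \emph{extra linear relation among the synchronized states}, which enlarges the matrix $C_p$ by one row. Concretely: for $t\geq T$ one has $U=\sum_{r=1}^p u_re_r$ by \eqref{f1}, and applying $C_p$ to the equation gives
\begin{equation*}
t\geq T:\qquad \sum_{r=1}^p C_pAe_r\,u_r=0\qquad \hbox{in}\quad \Omega .
\end{equation*}
If some $C_pAe_r\neq 0$, composing with a suitable linear functional yields constants $\alpha_r$, not all zero, with $\sum_{r=1}^p\alpha_ru_r=0$, hence $c_{p+1}U=0$ for $t\geq T$, where $c_{p+1}=\sum_{r=1}^p\alpha_re_r^T/\|e_r\|^2$. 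Stacking $c_{p+1}$ under $C_p$ gives $\widetilde C_{p-1}$ with $\hbox{rank}(\widetilde C_{p-1})=N-p+1$ (since $c_{p+1}^T\in\text{Ker}(C_p)\not\subseteq \text{Im}(C_p^T)$), and \emph{only now} does the dimension count work: $\dim\text{Ker}(D^T)+\dim\text{Im}(\widetilde C_{p-1}^T)=p+(N-p+1)>N$, so there is $E\neq 0$ with $D^T\widetilde C_{p-1}^TE=0$. Then $w=(E,\widetilde C_{p-1}U)$, with the special initial data \eqref{speini} built from $e=\widetilde C_{p-1}^TE$, satisfies \eqref{c5} and, by synchronization together with the extra relation, the final condition \eqref{c6}; Lemma \ref{Rnonbis} then gives the contradiction. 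Without this enlargement from $C_p$ to $\widetilde C_{p-1}$, the argument you outline cannot be completed.
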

\begin{proof}
It suffices to prove that
\begin{align}\label{7.2222}
C_pAe_r =0, \quad 1 \leq r \leq p.\end{align}

By \eqref{f1}, taking  the inner product with $C_p$ on both sides of the equations in system \eqref{B}, we get
\begin{equation}
t\geq T:\qquad  \sum_{r=1}^pC_pAe_ru_r =0\qquad \hbox{in}\quad \Omega.\end{equation}
If \eqref{7.2222} fails,  then there exist constant coefficients $\alpha_r (1 \leq r \leq p)$, not all equal to zero, such that
\begin{equation}\sum_{r=1}^p\alpha_ru_r=0\qquad \hbox{in}\quad \Omega.\label{g1}\end{equation} 
Let 
\begin{equation}\label{9.4} c_{p+1} = \sum_{r=1}^p \f{\alpha_re_r^T}{\|e_r\|^2}.\end{equation}
Noting $(e_r, e_s)=\|e_r\|^2 \delta_{rs}$, we have
\begin{equation}\label{9.5}t\geq T:\qquad  c_{p+1} U = \sum_{r=1}^p\alpha_ru_r=0\qquad \hbox{in}\quad \Omega.\end{equation}
Let
\begin{equation}\widetilde C_{p-1} = \begin{pmatrix}C_p\\
 c_{p+1}\end{pmatrix}.\end{equation}
We can easily get  \begin{equation}\label{7.888} t\geq T:\qquad \widetilde C_{p-1}  U  =0\qquad \hbox{in}\quad \Omega.\end{equation}
Noting \eqref{kerc1} and \eqref{9.4}, it is easy to see that  $c_{p+1}^T \not \in \hbox{Im}(  C_{p}^T)$, then, rank$(\widetilde C_{p-1}) = N-p+1.$  Since $M= \text{rank}(D)  = N-p$, we have $\hbox{Ker}(D^T)\cap \hbox{Im}(\widetilde C_{p-1}^T)\not =\{0\}$, then there exists a vector $E \not = 0$, such that
\begin{equation}D^T\widetilde C_{p-1}^TE=0.\end{equation}

Since system \eqref{B} is  exactly synchronizable by $p$-groups, taking the  special initial data \eqref{speini} for any given $\theta\in L^2(\Omega)$,  the solution $U$  to problem \eqref{B} and \eqref{speini}   satisfies \eqref{synppp} (or \eqref{8.6}) under  boundary control $H \in  L^2(0, T, (L^2(\Gamma))^M)$.
Let
\begin{equation}w=(E, \widetilde C_{p-1}U),\qquad {\cal L}\theta =-(E, \widetilde C_{p-1}AU),\qquad {\cal R}\theta =-(E, \widetilde C_{p-1}BU).\end{equation}
We get again problem \eqref{c5} for $w$. Noting  \eqref{7.888},  we have
\begin{equation}t=T:~~w(T)=0.\end{equation}
Similarly, we have $w'(T)=0$, then \eqref{c6} holds. Noting that $\Omega \subset \rr^n$ is a parallelepiped, similarly to the proof of Theorem \ref{Snon2},  we get a conclusion that contradicts Lemma \ref{Rnonbis}.
\end{proof}

\begin{rem}
The condition of $C_p$-compatibility  \eqref{comppp} is equivalent to the fact that there exist constants $\alpha_{rs}~(1 \leq r,s \leq p)$ such that \begin{align} \label{f11111}
 Ae_r= \sum_{s=1}^p \alpha_{sr}e_s \quad  1 \leqslant  r \leqslant  p, \end{align}
 or $A$ satisfies the following row-sum condition by blocks:
\begin{align}\label{asf1}
 \sum_{j=n_{r-1}+1}^{n_{r}}a_{ij}= \alpha_{sr}, \quad n_{s-1}+1 \leq i \leq n_{s}, \quad  1 \leq  r,s \leq p. \end{align}

In particular, when
\begin{align}
\alpha_{sr}=0, \quad  1 \leq r,s \leq p, \end{align}
we say that $A$   satisfies the zero-sum condition by blocks.
In this case, we have
\begin{align} \label{zerosum}
Ae_r= 0, \quad 1 \leq r \leq  p. \end{align}
\end{rem}



\subsection{Condition of $C_p$-compatibility for the boundary coupling matrix $B$}

Comparing with the internal coupling matrix $A$, the study on the necessity of the  condition of $C_p$-compatibility for  the boundary coupling matrix $B$  is more complicated. It concerns  the regularity of   solution to the problem with coupled  Robin boundary conditions.

Let
\begin{align}
\varepsilon_i=(0,\cdots, \overset{(i)}{1},\cdots, 0)^T, \qquad   1 \leq i \leq N \end{align}
be a set of classical orthogonal basis in $\rr^N$, and let
\begin{align}\label{9.15} V_r=\text{Span}\{\varepsilon_{n_{r-1}+1}, \cdots, \varepsilon_{n_{r}}\},  \qquad   1 \leq r \leq p.
\end{align}
Obviously, we have
\begin{align}\label{24.10} e_r\in V_r,  \quad   1 \leqslant  r \leqslant  p.
\end{align}

In what follows, we will discuss the necessity of the condition of $C_p$-compatibility for the boundary coupling matrix $B$ under the assumption that $Ae_r \in V_r$ and $Be_r \in V_r~   (1 \leqslant  r \leqslant  p)$.

\begin{thm}  \label{compacity/B} Let $\Omega  \subset \rr^n$ be a parallelepiped.
 Assume that $ M= \text{rank}(D)  = N-p$ and
\begin{equation}\label{invacon} Ae_r \in V_r,\quad Be_r \in V_r \quad    (1 \leqslant  r \leqslant  p).\end{equation} If system \eqref{B} is exactly synchronizable by $p$-groups, then the boundary coupling matrix $B$  should  satisfy the following condition of $C_p$-compatibility:
\begin{equation}B\text{Ker}(C_p)\subseteq \text{Ker}(C_p).\label{bcompati}\end{equation}\end{thm}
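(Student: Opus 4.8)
The plan is to mirror the structure of the proof of Theorem \ref{pne}, now exploiting the extra structural hypothesis \eqref{invacon} to handle the boundary coupling matrix $B$. I argue by contradiction: suppose that \eqref{bcompati} fails. By \eqref{f11111}-type reasoning, the condition of $C_p$-compatibility for $B$ is equivalent to $Be_r \in \text{Ker}(C_p) = \text{Span}\{e_1,\dots,e_p\}$ for each $r$. Since by assumption $Be_r \in V_r$, the failure of \eqref{bcompati} means that for some index $r$ the vector $Be_r$, although lying in $V_r$, is \emph{not} a scalar multiple of $e_r$; that is, $Be_r$ has a nonzero component in $V_r$ orthogonal to $e_r$. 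First I would fix such an $r$ and extract from $Be_r$ a linear functional that detects this transversal component, producing a row vector $c_{p+1}^T$ with $c_{p+1}^T \not\in \text{Im}(C_p^T)$ but which still annihilates the synchronizable state in the limit $t\geq T$.

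The key steps, in order, are as follows. First, using \eqref{f1}, apply $C_p$ to the equations of system \eqref{B} restricted to the boundary condition $\partial_\nu U + BU = DH$; for $t \geq T$ the state satisfies $U = \sum_{r=1}^p u_r e_r$, so the boundary coupling contributes a term $\sum_{r=1}^p C_p B e_r\, u_r$. The point of the hypothesis $Ae_r \in V_r$ and $Be_r \in V_r$ is to guarantee that the synchronized dynamics decouple cleanly across groups, so that the transversal component of $Be_r$ inside $V_r$ cannot be compensated by contributions from other groups and must itself vanish if the boundary relation is to hold for an arbitrary synchronizable state. Second, if \eqref{bcompati} fails I build, exactly as in \eqref{9.4}, an augmented matrix $\widetilde C_{p-1}$ of rank $N-p+1$ by appending the offending row $c_{p+1}$ to $C_p$. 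Since $M = \text{rank}(D) = N-p < N-p+1$, a dimension count forces $\text{Ker}(D^T) \cap \text{Im}(\widetilde C_{p-1}^T) \neq \{0\}$, so there is $E \neq 0$ with $D^T \widetilde C_{p-1}^T E = 0$. Third, with the special initial data \eqref{speini}, set
\begin{equation}
w = (E, \widetilde C_{p-1} U), \qquad {\cal L}\theta = -(E, \widetilde C_{p-1} A U), \qquad {\cal R}\theta = -(E, \widetilde C_{p-1} B U),
\end{equation}
and verify that $w$ solves problem \eqref{c5} with homogeneous final condition \eqref{c6}, invoking the regularity and compactness machinery of Theorem \ref{Smooth} and Lemma \ref{Rnonbis} to reach a contradiction.

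I expect the main obstacle to be the second step: namely, showing that the failure of $C_p$-compatibility for $B$ genuinely produces a final-time relation $\widetilde C_{p-1} U = 0$ (equation \eqref{7.888} in the analogue for $A$) that is \emph{strictly stronger} than synchronization, so that the augmented matrix has rank $N-p+1$. For the matrix $A$ this came from comparing $U''$ and $\Delta U$ inside $\Omega$, where differentiation in time is freely available. For $B$ the obstruction lives on the boundary $\Gamma$ and is coupled to the normal-derivative term $\partial_\nu U$; here the hypothesis \eqref{invacon} is doing essential work, since without $Ae_r, Be_r \in V_r$ the cross-group terms would mix and the transversal component of $Be_r$ could not be isolated. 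The delicate point is to justify, rigorously in the weak sense, that for $t \geq T$ the Robin boundary relation forces the boundary trace of $\sum_r (\text{transversal part of } Be_r)\,u_r$ to vanish, yielding the extra scalar relation; I would extract this by pairing against the eigenvector structure of $B$ (recall $B$ is similar to a symmetric matrix) and the block-diagonal form of $C_p$, then close the argument exactly as in Theorem \ref{Snon2} and Theorem \ref{pne} via Lemma \ref{Rnonbis}.
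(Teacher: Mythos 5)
Your overall scaffolding does match the paper's proof: use \eqref{invacon} and the mutual orthogonality of the subspaces $V_r$ to decouple the synchronized dynamics into per-group systems, read off from the boundary condition that $u_r\,C_pBe_r\equiv 0$ on $(T,+\infty)\times\Gamma$, append an extra row to $C_p$, and contradict Lemma \ref{Rnonbis}. But there is a genuine gap at the pivotal step, and it is not the step you flag as delicate. If $C_pBe_{\bar r}\neq 0$ for some $\bar r$, the boundary relation only gives $u_{\bar r}\equiv 0$ on the lateral boundary $(T,+\infty)\times\Gamma$. That is a \emph{boundary} statement, whereas the contradiction argument needs an \emph{interior} one: to invoke Lemma \ref{Rnonbis} you must have $\widetilde C_{p-1}U(T)=\widetilde C_{p-1}U'(T)=0$ in $\Omega$, i.e.\ the appended row must annihilate $U$ in $\Omega$, not merely its trace on $\Gamma$. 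Your proposal supplies no mechanism for this passage. The paper's mechanism is unique continuation: once $u_{\bar r}=0$ on $\Gamma$, the decoupled boundary condition $\partial_\nu u_{\bar r}e_{\bar r}+u_{\bar r}Be_{\bar r}=0$ forces $\partial_\nu u_{\bar r}=0$ on $\Gamma$ as well, so $u_{\bar r}$ solves a scalar wave equation on $(T,+\infty)\times\Omega$ with vanishing Cauchy data on the whole lateral boundary, and Holmgren's uniqueness theorem then yields $u_{\bar r}\equiv 0$ in $(T,+\infty)\times\Omega$. Only after this does the appended row --- which is $e_{\bar r}^T$ itself, not a combination of the form \eqref{9.4} built from a linear dependence among the $u_r$ (no such dependence is available here) --- satisfy $e_{\bar r}^TU\equiv 0$ in $\Omega$, giving $\mathrm{rank}(\widetilde C_{p-1})=N-p+1$ and the final condition \eqref{c6}.

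Two further points. First, your opening step asserts that the transversal component of $Be_r$ ``must itself vanish if the boundary relation is to hold''; but that is the conclusion of the theorem, not something the boundary relation alone delivers --- the relation $u_rC_pBe_r=0$ on $\Gamma$ is equally consistent with $u_{\bar r}$ vanishing on $\Gamma$, and ruling out that scenario is precisely what the Holmgren-plus-compactness contradiction accomplishes. Second, the tool you propose for your ``delicate point,'' namely pairing against the eigenvector structure of $B$ and its similarity to a symmetric matrix, is neither needed nor effective here: after the decoupling provided by \eqref{invacon}, the remaining difficulty is purely the propagation of boundary vanishing of a single scalar function $u_{\bar r}$ into the interior, which is a unique-continuation issue, not an algebraic one. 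The symmetrization of $B$ is the device used in the different setting of Theorem \ref{syn2group}, where the hypothesis \eqref{invacon} is unavailable and a Kalman-type rank condition must be brought in.
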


\begin{proof}
By \eqref{f1},   we have
\begin{equation}\label{psyn}\left\lbrace
\begin{array}{lll}\displaystyle \sum_{r=1}^p    (u''_re_r- \Delta u_re_r +  u_rAe_r)=0  & \hbox{in }     (T,+\infty) \times \Omega,\\
\displaystyle \sum_{r=1}^p    (\partial_\nu u_re_r  + u_rBe_r) =0   & \hbox{on }    (T,+\infty) \times \Gamma.
\end{array}
\right.
\end{equation}
Noting \eqref{24.10}--\eqref{invacon} and the fact that subspaces $V_r    (1 \leqslant  r \leqslant  p)$ are orthogonal to each other, for $1 \leqslant  r \leqslant  p$ we have
\begin{equation}\label{sep} \left\lbrace
\begin{array}{lll} u''_re_r- \Delta u_re_r +  u_rAe_r=0  & \hbox{in }      (T,+\infty) \times \Omega,\\\noalign{\medskip}
 \partial_\nu u_re_r  + u_r Be_r=0   & \hbox{on }     (T,+\infty) \times \Gamma.
\end{array}
\right.
\end{equation}
Taking the inner product with $C_p$ on both sides of the boundary condition on $\Gamma$ in \eqref{sep}, and noting \eqref{kerc1}, we get
\begin{equation} u_r C_p Be_r \equiv 0 \quad   \hbox{on }    (T,+\infty) \times \Gamma,\quad 1 \leqslant  r \leqslant  p.
\end{equation}
We claim that $C_p Be_r=0~(r=1,\cdots, p)$, which just mean that $B$ satisfies the   condition of $C_p$-compatibility \eqref{bcompati}.  Otherwise,  there exists an $ \bar r~   (1 \leqslant   \bar r   \leqslant  p)$  such that   $C_p Be_{ \bar r}\not =0$, consequently, we have
\begin{equation}u_{ \bar r} \equiv 0 \quad   \hbox{on }    (T,+\infty) \times \Gamma.\end{equation}
Then, it follows from  the boundary condition in system  \eqref{sep} that  \begin{equation}\partial_\nu u_{ \bar r} \equiv 0 \quad   \hbox{on }    (T,+\infty) \times\Gamma.\end{equation}
Hence, applying  Holmgren's uniqueness theorem (Theorem 8.2 in \cite{Lions3}) to  \eqref{sep}, we get
 \begin{equation}\label{l2}u_{ \bar r}\equiv 0 \quad  \hbox{in }    (T,+\infty) \times \Omega, \end{equation}
then it is easy to check that
\begin{equation}t \geqslant  T: \quad e_{\bar r}^T U
\equiv 0 \quad  \hbox{in }~ \Omega. \end{equation}
Let
\begin{equation} \widetilde C_{p-1} = \begin{pmatrix}C_p\\
 e_{\bar r}^T\end{pmatrix}.\end{equation}
We have  \eqref{7.888}.
Since $ e_{ \bar r}^T \not \in \hbox{Im}   ( C_p^T)$,  it is easy to show that rank$   (\widetilde C_{p-1}) = N-p+1$. On the other hand, since  $ \text{rank}   (\text{Ker}   (D^T))=p$, we have $\hbox{Ker}   (D^T)\cap \hbox{Im}   (\widetilde C_{p-1}^T)\not =\{0\}$, then,  there exists a vector  $E \not = 0$, such that
\begin{equation}D^T\widetilde C_{p-1}^TE=0.\end{equation}

Again, since system \eqref{B} is  exactly synchronizable by $p$-groups,  taking the  special initial data \eqref{speini} for any given $\theta\in L^2(\Omega)$, the solution $U$  to problem \eqref{B} and \eqref{speini}   satisfies \eqref{synppp} (or \eqref{8.6}) under  boundary control $H \in  L^2(0, T, (L^2(\Gamma))^M)$.
Let
\begin{equation}w= (E, \widetilde C_{p-1}U),\quad {\mathcal L}\theta =-(E, \widetilde C_{p-1}AU),\quad {\mathcal R}\theta =-(E, \widetilde C_{p-1}BU).\end{equation}
We have $ w(T)= w'(T)   \equiv 0,$
thus, we get again problem \eqref{c5} for $w$, and \eqref{c6} holds. Therefore, noting that $\Omega  $  is a parallelepiped, similarly to the proof of Theorem \ref{Snon2},  we  get a conclusion that contradicts Lemma \ref{Rnonbis}.
\end{proof}


\begin{rem}
Noting that  condition \eqref{invacon} obviously holds for $p=1$,  so, if system  \eqref{B} is exactly  synchronizable ($p=1$), the conditions of $C_1$-compatibility
 \begin{equation}
A\text{Ker}(C_1)\subseteq \text{Ker} (C_1) \quad \hbox{and}  \quad   B\text{Ker}(C_1)\subseteq \text{Ker} (C_1)\end{equation}
are always satisfied for both $A$ and $B$. 
\end{rem}

\subsection{Conditions of $C_2$-compatibility}

In this section, we will study the necessity of the condition of $C_2$-compatibility (when $p=2$) for $B$ for a specific example, where the restricted condition on $B$ given in \eqref{invacon} can be removed.

\begin{thm} \label{syn2group} Let $\Omega  \subset \rr^n$ be a parallelepiped.  
Assume that the coupling matrix  $A$ satisfies the zero-sum condition by blocks \eqref{zerosum}. Assume furthermore that system \eqref{B} is exactly  synchronizable by 2-groups with $M=\text{rank}(D)=N-2$. Then the coupling matrix $B$   necessarily satisfies the condition of $C_2$-compatibility:
\begin{equation}B\text{Ker}(C_2)\subseteq \text{Ker}(C_2).\label{comp2}\end{equation}\end{thm}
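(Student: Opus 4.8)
The plan is to argue by contradiction and reduce the failure of \eqref{comp2} to a violation of Lemma \ref{Rnonbis}, following the scheme of Theorems \ref{Snon2}--\ref{compacity/B}. Since $\text{Ker}(C_2)=\text{Span}\{e_1,e_2\}$ by \eqref{kerc1}, condition \eqref{comp2} is equivalent to $C_2Be_r=0$ (i.e. $Be_r\in\text{Span}\{e_1,e_2\}$) for $r=1,2$. First I would analyse the autonomous regime $t\geq T$: by Definition \ref{pdef} the control $H$ is supported in $[0,T]$, so for $t\geq T$ the synchronized solution is $U=u_1e_1+u_2e_2$ (see \eqref{f1}) and \eqref{B} becomes homogeneous. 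The zero-sum condition \eqref{zerosum} gives $Ae_1=Ae_2=0$, so the interior equation decouples into two free wave equations $u_r''-\Delta u_r=0$ in $\Omega$ ($r=1,2$), while the boundary condition becomes $\partial_\nu u_1e_1+\partial_\nu u_2e_2+u_1Be_1+u_2Be_2=0$ on $\Gamma$.

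Applying $C_2$ to this boundary identity and using $C_2e_r=0$, I get
$$u_1\,C_2Be_1+u_2\,C_2Be_2=0\quad\text{on }\Gamma,$$
whereas taking the inner products with $e_1$ and $e_2$ (which are mutually orthogonal) expresses $\partial_\nu u_1$ and $\partial_\nu u_2$ on $\Gamma$ as fixed linear combinations of $u_1,u_2$. Suppose now $C_2Be_{\bar r}\neq0$ for some $\bar r$. The displayed relation then forces, pointwise on $\Gamma$, the pair $(u_1,u_2)$ into a fixed proper subspace of $\rr^2$. If $C_2Be_1$ and $C_2Be_2$ are linearly independent this means $u_1\equiv u_2\equiv0$ on $\Gamma$; the two relations for the normal derivatives then give $\partial_\nu u_1\equiv\partial_\nu u_2\equiv0$ on $\Gamma$, and Holmgren's uniqueness theorem (as in Theorem \ref{compacity/B}) yields $u_1\equiv u_2\equiv0$, hence $U\equiv0$, in $\Omega$ for $t\geq T$. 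Otherwise there is a fixed $(c_1,c_2)\neq0$ with $z:=c_1u_1+c_2u_2\equiv0$ on $\Gamma$; substituting this proportionality into the relations for $\partial_\nu u_1,\partial_\nu u_2$ I would compute $\partial_\nu z$ on $\Gamma$, show that it also vanishes, and conclude by Holmgren that $z\equiv0$ in $\Omega$ for $t\geq T$, which is the extra interior relation $\tilde c\,U\equiv0$ with the row vector $\tilde c=\sum_{r=1}^2 c_r\|e_r\|^{-2}e_r^{T}\notin\text{Im}(C_2^T)$.

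In both cases I obtain a matrix $\widetilde C$ — either $I_N$, or $\begin{pmatrix}C_2\\ \tilde c\end{pmatrix}$ — with $\text{rank}(\widetilde C)>M=N-2$ and $\widetilde CU\equiv0$ for $t\geq T$. Since $\dim\text{Ker}(D^T)=N-M=2$ and $\dim\text{Im}(\widetilde C^T)\geq N-1$, a dimension count gives $E\neq0$ with $D^T\widetilde C^TE=0$. Taking the special initial data \eqref{speini} (for a suitable direction $e$) and setting $w=(E,\widetilde CU)$, $\mathcal L\theta=-(E,\widetilde CAU)$, $\mathcal R\theta=-(E,\widetilde CBU)$, one checks, exactly as in Theorems \ref{Snon2} and \ref{uncontrollable}, that $w$ solves problem \eqref{c5} and satisfies the final condition \eqref{c6} for every $\theta\in L^2(\Omega)$. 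Moreover, by the regularity Theorem \ref{Smooth} (with $\alpha=3/4-\epsilon$ for the parallelepiped, so that $2\alpha-1>1-\alpha$) together with the Lions and Simon compact embeddings, $\mathcal L$ and $\mathcal R$ are compact. This contradicts Lemma \ref{Rnonbis}, and therefore $C_2Be_r=0$ for $r=1,2$, i.e. \eqref{comp2} holds.

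The hard part will be the rank-one step, namely showing that the Neumann trace $\partial_\nu z$ of the boundary-vanishing combination $z=c_1u_1+c_2u_2$ also vanishes on $\Gamma$, so that Holmgren upgrades $z|_\Gamma=0$ to $z\equiv0$ in $\Omega$. In contrast with Theorem \ref{compacity/B}, here $B$ need not preserve the blocks $V_r$, so the boundary conditions coupling $u_1$ and $u_2$ do not split and $z|_\Gamma=0$ does not by itself force $\partial_\nu z|_\Gamma=0$; it is precisely at this point that the zero-sum hypothesis \eqref{zerosum} on $A$ and the two-group structure $p=2$ must be used to close the argument.
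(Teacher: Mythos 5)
Your overall framework is the same as the paper's: argue by contradiction, extract from the synchronized regime a scalar function $z$ vanishing on $\Gamma$, upgrade this to $z\equiv 0$ in $\Omega$ by Holmgren, augment $C_2$ to a matrix $\widetilde C_1$ of rank $N-1$, and contradict Lemma \ref{Rnonbis} via the special initial data \eqref{speini}. Your Case 1 (where $C_2Be_1$ and $C_2Be_2$ are linearly independent, so $u_1=u_2=0$ and then $\partial_\nu u_1=\partial_\nu u_2=0$ on $\Gamma$) is correct and is even a slight simplification, since the paper treats both cases uniformly. But the proposal has a genuine gap exactly where you flag it yourself: in the dependent case you are left with $z=c_1u_1+c_2u_2\equiv 0$ on $\Gamma$ and still need $\partial_\nu z\equiv 0$ on $\Gamma$ before Holmgren applies. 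This cannot be closed by the computation you sketch. Writing the boundary relations as $\partial_\nu u_r=-\sum_s b_{rs}u_s$ on $\Gamma$ with $b_{rs}=(Be_s,e_r)/\|e_r\|^2$ and eliminating $u_2=-(c_1/c_2)u_1$, one finds $\partial_\nu z=\kappa\,u_1$ on $\Gamma$ for a constant $\kappa$ that has no reason to vanish: the zero-sum condition \eqref{zerosum} only governs the interior term $AU$, and the two-group structure alone imposes no constraint tying the components of $Be_r$ inside $\text{Ker}(C_2)$ (which determine $\kappa$) to the components transverse to it (which determine $(c_1,c_2)$). So the key step is missing, not merely deferred.

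The paper closes this step with machinery absent from your outline. It first symmetrizes the reduced $2\times 2$ system: using the standing assumption that $B$ is similar to a real symmetric matrix, it passes to $w=L^{1/2}u$ satisfying the free wave equation with a \emph{symmetric} Robin matrix $\widehat\Lambda$ (system \eqref{ss2}) and the observation $\widehat D^Tw=0$ on $\Gamma$ (relation \eqref{g3}). Then comes the crucial input: by a unique continuation theorem proved by the multiplier method in \cite{Luthese}, the Kalman rank condition $\hbox{rank}(\widehat D,\widehat\Lambda\widehat D)=2$ would force $w\equiv 0$, hence $U\equiv 0$ for $t\geq T$, i.e.\ exact null controllability with $N-2$ controls, contradicting Theorem \ref{uncontrollable} for the parallelepiped; hence the rank condition \eqref{24.27} must fail. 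The Hautus-type consequence \eqref{24.28} then yields an eigenvector $E$ of $\widehat\Lambda$ with $\widehat D^TE=0$. Since $\text{Ker}(\widehat D^T)$ is one-dimensional, the trace $w|_\Gamma$ is pointwise proportional to $E$, so $\widehat\Lambda w=\mu w$ on $\Gamma$, and only then does the boundary condition for $z=\widehat D^Tw$ reduce to the scalar relation $\partial_\nu z+\mu z=0$, giving $\partial_\nu z=0$ on $\Gamma$ and allowing Holmgren. This interplay --- symmetrization of $B$, the Kalman/Hautus argument, and a second use of the non-controllability Theorem \ref{uncontrollable} inside the proof --- is the missing idea; nothing in your proposal substitutes for it.
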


\begin{proof}
By the exact  boundary synchronization by 2-groups of \eqref{B}, we have
\begin{equation}\label{synpresent} t\geq T:\quad U= e_1u_1  +  e_2u_2\quad \hbox{in  } \Omega. \end{equation}
Noting \eqref{zerosum},   as $t\geq T$  we have
$$AU= Ae_1u_1+ Ae_2u_2=0,$$
then it is easy to see that
\begin{equation}\label{ss00}
\left\lbrace \begin{array}{lll}   U''- \Delta U =0 \qquad & \hbox{in}\quad (T, + \infty)\times \Omega,\\
\partial_\nu U + B U=0  & \hbox{on}\quad  (T, + \infty)\times  \Gamma.\end{array}
\right.
\end{equation}
Let $P$ be a matrix such that  $\widehat B= PBP^{-1}$ is a real symmetric matrix.  Denote
\begin{equation} u =    (u_1, u_2)^T.\end{equation}
Taking the inner product on both sides of \eqref{ss00} with $P^TPe_i$ for $i=1, 2$, we get
\begin{equation}\label{ss}
\left\lbrace
\begin{array}{lll}  L u''- L \Delta u =0 \qquad & \hbox{in}\quad (T, + \infty)\times \Omega,\\
L \partial_\nu u + \Lambda u=0  & \hbox{on}\quad  (T, + \infty)\times \Gamma,\end{array}
\right.
\end{equation}
where  the matrices $L$ and $\Lambda$ are given by
\begin{equation}L=    (Pe_i, Pe_j)  \quad \hbox{and}  \quad   \Lambda   =    (\widehat BPe_i, Pe_j),\quad 1\leqslant i, j\leqslant 2,\end{equation}
respectively. Clearly,  $L$ is a symmetric and  positive definite matrix and $\Lambda$ is a symmetric matrix.

Taking the inner product on both sides of  \eqref{ss} with $L^{-\frac{1}{2}}$ and denoting  $w=L^{\frac{1}{2}}u$, we get
\begin{equation}\label{ss2}
 \left\lbrace
\begin{array}{lll}  w''- \Delta w =0 \qquad & \hbox{in}\quad (T, + \infty)\times \Omega,\\
  \partial_\nu w + \widehat {\Lambda} w=0  & \hbox{on}\quad  (T, + \infty)\times \Gamma,\end{array}
\right.\end{equation}
where $\widehat {\Lambda}=L^{-\frac{1}{2}} \Lambda L^{-\frac{1}{2}}$ is also a symmetric matrix.

On the other hand, taking the inner product with $C_2$ on both sides of the boundary condition on $\Gamma$ in system  \eqref{B} and noting \eqref{synpresent}, we get
\begin{equation}\label{9.35rr}t\geq T:\qquad  C_2Be_1u_1 + C_2Be_2u_2 \equiv0\qquad\hbox{on}\quad \Gamma.\end{equation}

We  claim that $C_2Be_1= C_2Be_2=0$, namely, $B$  satisfies the condition of $C_2$-compatibility \eqref{comp2}.

Otherwise,   without loss of generality, we  way assume that  $C_2Be_1\not = 0$.
Then it follows from \eqref{9.35rr} that there  exists a non-zero vector $D_2\in \rr^2$, such that
\begin{equation}\label{24.25}t\geqslant  T:\quad  D_2^Tu\equiv 0 \quad\hbox{on } \Gamma. \end{equation}
Denoting
$$\widehat {D}^T= D_2^T L^{-\frac{1}{2}}, $$ we then have
\begin{equation}t\geq T:\qquad  \widehat {D}^Tw=0 \qquad\hbox{on}\quad \Gamma,\label{g3}\end{equation}
in which $w=L^{\frac{1}{2}}u$.
 By the multiplier method, we can prove that the following Kalman's criterion
\begin{equation}\hbox{rank}   ( \widehat D, \widehat {\Lambda}   \widehat D) =2\label{24.27}\end{equation}
is sufficient for the unique continuation  of  system \eqref{ss2}  under the observation \eqref{g3} on the infinite horizon $[T, +\infty)$ (see Theorem 3.22 and Remark 3.12 in \cite{Luthese}).  Since $M=\text{rank}(D)=N-2$, by Theorem \ref{uncontrollable}, system  \eqref{B}   is not exactly null controllable. So, the rank  condition \eqref{24.27}  does not hold. Thus, there exists a vector $E \not = 0$ in $\rr^2$, such that
 \begin{equation}\widehat {\Lambda}^T E= \widehat { \Lambda}  E=\mu E \quad \hbox{and}  \quad   \widehat D^TE=0.\label{24.28}\end{equation}
Noting \eqref{g3} and the second formula of \eqref{24.28}, we have both $E$ and $ w|_{\Gamma}\in \text{Ker}   ( \widehat D)$. Since $\text{dim }\text{Ker}   ( \widehat D)=1$,
there exists a constant $\alpha$ such that   $w=\alpha E$  on $\Gamma$. Therefore, noting the first formula of \eqref{24.28}, we have
 \begin{equation} \widehat {\Lambda} w=\widehat {\Lambda} \alpha E= \mu \alpha E=\mu w  \quad \text{on } \Gamma. \end{equation}
 Thus, \eqref{ss2} can be rewritten as
\begin{equation}\label{24.42}
\left\lbrace
\begin{array}{lll}  w''-\Delta w=0 \qquad &\hbox{in}\quad (T, + \infty)\times \Omega,\\
\partial_\nu w + \mu w=0  & \hbox {on}\quad (T, + \infty)\times \Gamma.\end{array}
\right.\end{equation}
Let $z= \widehat {D}^Tw$. Noting \eqref{g3},it follows from  \eqref{24.42} that
\begin{equation}  \left\lbrace
\begin{array}{lll}  z''-\Delta z =0  & \hbox{in  }     (T, + \infty)\times \Omega,\\
\partial_\nu z =z=0  & \hbox{on }     (T, + \infty)\times \Gamma.\end{array}\right.\end{equation}
Then, by Holmgren's uniqueness theorem, we have
\begin{equation} \label{24.29} t \geqslant  T:\quad z= \widehat D^Tw =D_2^Tu\equiv 0\quad \hbox{in }  \Omega.  \end{equation}

Let $D_2^T=   (\alpha_1, \alpha_2)$.  Define the following  row vector
\begin{equation}c_{3} = \f{\alpha_1 e_1^T}{\|e_1\|^2} + \f{\alpha_2 e_2^T}{\|e_2\|^2}.\end{equation}
Noting $   (e_1,e_2)=0$ and \eqref{24.29}, we have
\begin{equation}t\geqslant  T:\quad  c_{3} U = \alpha_1 u_1 +  \alpha_2 u_2=D_2^Tu\equiv 0 \quad \hbox{in }  \Omega.\end{equation}
Let
\begin{equation}\widetilde C_1 = \begin{pmatrix}C_2\\
 c_3\end{pmatrix}.\end{equation}
We get \begin{equation}\label{7.88882} t\geq T:\qquad \widetilde C_1  U  =0\qquad \hbox{in}\quad \Omega.\end{equation}
Since  $c_{3}^T \not \in \hbox{Im}   (  C_2^T)$, it is easy to see that rank$   (\widetilde C_1) = N-1,$ and $\hbox{Ker}   (D^T)\cap \hbox{Im}   (\widetilde C_1^T)\not =\{0\}$, thus there exists a vector  $\widetilde E \not = 0$, such that
\begin{equation} D^T\widetilde C_1^T\widetilde E=0.\end{equation}
Since system \eqref{B} is  exactly synchronizable by $2$-groups,  taking the  special initial data \eqref{speini} for any given $\theta\in L^2(\Omega)$,  the solution $U$ to problem \eqref{B} and \eqref{speini}   satisfies \eqref{synppp} (or \eqref{8.6}) under  boundary control $H $. Let
\begin{equation}u=   (\widetilde E, \widetilde C_1U),\quad {\mathcal L}\theta =-(\widetilde E, \widetilde C_1AU),\quad {\mathcal R}\theta =-(\widetilde E, \widetilde C_1BU).\end{equation}
We get again $ w(T)= w'(T)   \equiv 0$, and  problem \eqref{c5} of $w$  satisfies \eqref{c6}. Noting that $\Omega $ is a parallelepiped, similarly to the proof of Theorem \ref{Snon2}, we get a contradiction to  Lemma \ref{Rnonbis}.
\end{proof}

\section{Determination of  the exactly synchronizable state by $p$-groups}\label{robindetergroup}

In  general,  exactly synchronizable states by $p$-groups   depend not only on  initial data, but also  on  applied boundary controls. However, when the coupling matrices $A$ and $B$ satisfy certain algebraic conditions, the exactly synchronizable state by $p$-groups can be independent of  applied boundary controls. In this section, 
we first discuss the case when the exactly synchronizable state  by $p$-groups is independent of applied boundary controls, then we present the estimate on each exactly synchronizable state  by $p$-groups in general situation.

\begin{thm} \label{state/g} Let $\Omega \subset \mathbb{R}^n  $ be a smooth bounded domain. 
Assume that both $A$ and $B$ satisfy the conditions of $C_p$-compatibility \eqref{cpcom}. 
Assume furthermore that $A^T$ and $B^T$ possess a common invariant subspace $V$,  biorthogonal to  Ker$(C_p)$ (see Definition 2.1 in \cite{Rao8}). Then there exists a boundary control matrix $D$ with $M=\hbox{rank} (D)=\hbox{rank} (C_pD)=N-p$, such that system \eqref{B} is exactly   synchronizable by $p$-groups, and the exactly synchronizable state by $p$-groups $u=(u_1,\cdots, u_p)^T$ is independent of   applied boundary controls.
\end{thm}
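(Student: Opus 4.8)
The plan is to construct the boundary control matrix $D$ explicitly via the biorthogonal subspace and then decouple the synchronizable state by projecting onto $V$. First I would set up the algebraic framework: since $A^T$ and $B^T$ share a common invariant subspace $V$ of dimension $p$ that is biorthogonal to $\text{Ker}(C_p) = \text{Span}\{e_1,\dots,e_p\}$, I can choose a basis $\{d_1,\dots,d_p\}$ of $V$ satisfying the biorthogonality relation $(d_s, e_r) = \delta_{rs}$ (after suitable normalization). I would then define $D$ through $\text{Ker}(D^T) = V$, which forces $M = \text{rank}(D) = N-p$; the invariance of $V$ under $A^T$ and $B^T$, together with the $C_p$-compatibility of $A$ and $B$, should guarantee $\text{Ker}(C_p) \cap \text{Im}(D) = \{0\}$ so that by Lemma 2.2 in \cite{Rao8} we obtain $\text{rank}(C_pD) = \text{rank}(D) = N-p$. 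This makes $\overline D_p = C_pD$ invertible, and Theorem \ref{psugroup} (via Theorem \ref{controllable} applied to the reduced system \eqref{c99}) yields the exact boundary synchronization by $p$-groups.

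The core of the argument is showing that $u = (u_1,\dots,u_p)^T$ does not depend on the applied control $H$. The key idea is to test the state $U$ against the invariant vectors $d_r \in V$. For $t \geq T$, using the synchronization representation \eqref{f1}, namely $U = \sum_{s=1}^p u_s e_s$, and the biorthogonality $(d_r, e_s) = \delta_{rs}$, I would obtain
\begin{equation}
t \geq T:\qquad u_r = (d_r, U) \qquad \text{in } \Omega.
\end{equation}
The plan is then to derive a closed, self-contained system of wave equations satisfied by the scalar functions $\phi_r := (d_r, U)$ on all of $(0,+\infty) \times \Omega$ that involves neither $H$ nor the transient part of $U$. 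Taking the inner product of the equation in \eqref{B} with $d_r$ gives $\phi_r'' - \Delta \phi_r + (d_r, AU) = 0$; because $A^T d_r \in V$ we can write $A^T d_r = \sum_{s} \widetilde a_{sr} d_s$, so that $(d_r, AU) = (A^T d_r, U) = \sum_s \widetilde a_{sr} \phi_s$, which closes the interior equation among the $\phi_r$ alone. Similarly, since $D^T d_r = 0$ (as $d_r \in V = \text{Ker}(D^T)$), the control term drops out of the boundary condition, and using $B^T d_r \in V$ the boundary relation $\partial_\nu \phi_r + (B^T d_r, U) = (D^T d_r, H) = 0$ also closes into a system purely in the $\phi_s$.

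Consequently the vector $\Phi = (\phi_1,\dots,\phi_p)^T$ solves a coupled wave system with reduced coupling matrices $\widetilde A = (\widetilde a_{rs})$ and a reduced boundary matrix, subject to initial data $\Phi(0) = ((d_r, \widehat U_0))_r$ and $\Phi'(0) = ((d_r, \widehat U_1))_r$ that depend only on the given initial data $(\widehat U_0, \widehat U_1)$ and not on $H$. By the well-posedness of this reduced problem (Theorem \ref{exist}, whose hypotheses are met because the reduced boundary matrix is similar to a symmetric matrix by Lemma \ref{sym}), the functions $\phi_r$ are uniquely determined on the whole time interval by the initial data alone. Since $u_r = \phi_r$ for $t \geq T$, the exactly synchronizable state $u = (u_1,\dots,u_p)^T$ is independent of the applied boundary control $H$, which is the desired conclusion. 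The main obstacle I anticipate is verifying that the inner-product test against $d_r$ genuinely closes both the interior and boundary relations without residual dependence on the unsynchronized components of $U$; this hinges precisely on the invariance of $V$ under $A^T$ and $B^T$ and on $V = \text{Ker}(D^T)$, so the delicate point is confirming that these three algebraic facts are exactly what make the control term and the cross-coupling vanish.
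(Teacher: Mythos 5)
Your proposal is correct and takes essentially the same route as the paper: define $D$ by $\hbox{Ker}(D^T)=V$, obtain exact synchronization by $p$-groups via Lemma 2.2 of \cite{Rao8} and Theorem \ref{psugroup}, then project $U$ onto a basis of $V$ biorthogonal to $\{e_1,\cdots,e_p\}$ to get a closed system free of $H$ whose solution determines $u$. Two minor attributions to fix: the transversality $\hbox{Ker}(C_p)\cap\hbox{Im}(D)=\{0\}$ follows from the biorthogonality of $V$ to $\hbox{Ker}(C_p)$ (Lemma 2.5 in \cite{Rao8}), not from the invariance of $V$ or the $C_p$-compatibility of $A$ and $B$; and Lemma \ref{sym} concerns the reduced matrix $\overline B_p$ defined by $C_pB=\overline B_pC_p$, not the matrix of $B^T$ restricted to $V$, so the uniqueness for the projected problem should instead be justified directly by standard well-posedness of the constant-coefficient Robin system.
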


\begin{proof}
Define the boundary control matrix $D$ by
 \begin{equation} \label{10.1}\hbox{Ker}(D^T)=V.\end{equation}
Since $V$ is biorthogonal to Ker$(C_p)$, by Lemma 2.5 in \cite{Rao8}, we have
\begin{equation}\label{10.2}\hbox{Ker}(C_p)\cap  \hbox{Im}(D) =\hbox{Ker}(C_p)\cap V^\perp =\{0\},\end{equation}
then, by Lemma 2.2 in \cite{Rao8},  we have
\begin{equation}\hbox{rank}(C_pD) =  \hbox{rank}(D)=M =N-p.\end{equation}
Therefore,  by Theorem \ref{psugroup}, system \eqref{B} is exactly   synchronizable by $p$-groups. Let $U$ be  the solution to problem \eqref{B}--\eqref{Bin}, which realizes the exact boundary synchronization  by $p$-groups at time $T>0$ under such $D$ and boundary control $H$.

By \eqref{10.2}, noting $\hbox{Ker}(C_p) =\text{Span} \{e_1,\cdots, e_p\}$, we may write
\begin{equation} V=\text{Span}\{E_1, \cdots, E_p\} \quad \text{with}~(e_r, E_s)=\delta_{rs} (r,s=1, \cdots, p). \end{equation}
Since $V$ is a common invariant subspace of $A^T$ and $B^T$,  there exist constants $\alpha_{rs}$ and $\beta_{rs} (r,s=1, \cdots, p)$ such that
\begin{equation}A^TE_r = \sum_{s=1}^p\alpha_{rs}
E_s, \qquad B^TE_r = \sum_{s=1}^p\beta_{rs}E_s.\end{equation}
For $r=1, \cdots, p$, let
\begin{equation}\phi_r = (E_r, U).\end{equation}
By system \eqref{B} and noting \eqref{10.1}, for $r=1, \cdots, p$ we have
\begin{equation}\left\lbrace
\begin{array}{lll}\displaystyle \phi_r'' -\Delta\phi_r +\sum_{s=1}^p\alpha_{rs}\phi_s=0 \qquad  & \hbox{in}\quad  (0,+\infty) \times \Omega,\\
\displaystyle \partial_\nu\phi_r  +\sum_{s=1}^p\beta_{rs}\phi_s=0     & \hbox{on}\quad  (0,+\infty) \times  \Gamma,\\
 t=0:\quad \phi_r=(E_r,\widehat{U}_0),\quad  \phi_r'=(E_r, \widehat{U}_1)  \qquad   & \text{in}\quad  \Omega. \end{array}
\right.\label{de}
\end{equation}
On the other hand, for $r=1, \cdots, p$ we have
\begin{equation}t\geq T:\quad \phi_r = (E_r, U) =  \sum_{s=1}^p(E_r, e_s)u_s =\sum_{s=1}^p\delta_{rs}u_s= u_r.\end{equation}
Thus, the exactly synchronizable state by $p$-groups $u=(u_1,\cdots, u_p)^T$ is entirely determined by the solution to problem \eqref{de}, which is independent of applied boundary controls $H$.
\end{proof}

The following result gives the counterpart of Theorem \ref{state/g}.

\begin{thm} \label{antistate/g} Let $\Omega \subset \mathbb{R}^n$ be a smooth bounded domain (say, with $C^3$ boundary).  Assume that both  $A$ and $B$ satisfy the conditions of $C_p$-compatibility \eqref{cpcom}. 
Assume furthermore that system \eqref{B} is exactly   synchronizable by $p$-groups. If there exists a subspace $V=\text{Span}\{E_1,\cdots, E_p\} $ of dimension $p$, such that the projection functions
\begin{equation} \phi_r=(E_r, U),\qquad r=1,\cdots, p \end{equation}
are independent of applied  boundary controls $H$, where $U$ is the solution to problem \eqref{B}--\eqref{Bin}, which realizes the exact boundary synchronization  by $p$-groups at time $T$,
then $V$ is a common invariant subspace of $A^T$ and $B^T$, $V \subseteq \text{Ker}(D^T)$, and biorthogonal to  Ker$(C_p)$.
\end{thm}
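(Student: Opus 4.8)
The plan is to run the proof of Theorem \ref{state/g} in reverse: pair the state with each $E_r$, read off the scalar equations the projections satisfy, and convert the qualitative hypothesis ``$\phi_r$ is control-independent'' into the three asserted algebraic conditions. First I would pair system \eqref{B} with $E_r$ and write $\phi_r=(E_r,U)$, obtaining
\[
\phi_r'' - \Delta \phi_r + (A^T E_r, U) = 0 \ \text{in}\ \Omega, \qquad \partial_\nu \phi_r + (B^T E_r, U) = (D^T E_r, H) \ \text{on}\ \Gamma .
\]
Since $\phi_r$ does not depend on the control, neither does $\phi_r''-\Delta\phi_r$; hence from the interior equation $(A^T E_r,U)$ is control-independent, i.e. $A^TE_r$ lies in the subspace $\mathcal{W}:=\{w\in\mathbb{R}^N:(w,U)\ \text{is control-independent}\}$, which contains $V$. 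Note that the reduced system \eqref{c99} is available precisely because $A,B$ satisfy the $C_p$-compatibility hypothesis, and that exact synchronizability by $p$-groups is equivalent to the exact controllability of \eqref{c99}, as in the proof of Theorem \ref{psugroup}.

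The crucial step is $V\subseteq\text{Ker}(D^T)$. I would fix the initial data, take two synchronizing controls $H^{(1)},H^{(2)}$ with solutions $U^{(1)},U^{(2)}$, and set $Y=U^{(1)}-U^{(2)}$, $G=H^{(1)}-H^{(2)}$; then $Y$ has vanishing Cauchy data and, both states being synchronized, $C_pY=0$ for $t\geq T$. Control-independence gives $(E_r,Y)\equiv0$, so the interior and boundary equations for $Y$ collapse to $(A^TE_r,Y)=0$ in $\Omega$ and $(B^TE_r,Y)|_\Gamma=(D^TE_r,G)$ on $\Sigma=(0,T)\times\Gamma$. Here I would invoke the improved regularity of Theorem \ref{Smooth}: since $Y$ solves \eqref{B}--\eqref{Bin} with an $L^2$ boundary control and zero Cauchy data, $Y|_\Sigma\in(H^{2\alpha-1}(\Sigma))^N$ with $2\alpha-1>0$, so the left-hand side lies in $H^{2\alpha-1}(\Sigma)$ and is strictly smoother in $(t,x)$ than a generic $L^2$ control. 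Because the difference-controls $G$ form a rich infinite-dimensional family (the reduced system being exactly controllable), the scalar function $(D^TE_r,G)$ cannot be forced into $H^{2\alpha-1}(\Sigma)$ for all admissible $G$ unless $D^TE_r=0$; this yields $V\subseteq\text{Ker}(D^T)$.

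Once $D^TE_r=0$, the boundary condition gives $(B^TE_r,U)=-\partial_\nu\phi_r$, again control-independent, so $B^TE_r\in\mathcal{W}$; thus $A^TV\subseteq\mathcal{W}$ and $B^TV\subseteq\mathcal{W}$. To close the invariance inside $V$ I would prove $\mathcal{W}=V$ by a dimension count: writing $\mathcal{Z}$ for the pointwise span in $\mathbb{R}^N$ of all difference trajectories $Y(t,x)$, one has $\mathcal{W}=\mathcal{Z}^\perp$, while exact controllability of the reduced system makes the loops $C_pY$ span $\mathbb{R}^{N-p}$ pointwise, so $\dim\mathcal{Z}\geq\dim C_p(\mathcal{Z})=N-p$ and hence $\dim\mathcal{W}\leq p$; since $V\subseteq\mathcal{W}$ with $\dim V=p$, we get $\mathcal{W}=V$ and therefore $A^TV\subseteq V$, $B^TV\subseteq V$. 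The same spanning fact gives biorthogonality: if $C_p^T\eta\in V=\mathcal{W}$ then $(\eta,C_pY)\equiv0$ for every difference, forcing $\eta=0$, so $V\cap\text{Im}(C_p^T)=\{0\}$; as $\dim V=\dim\text{Ker}(C_p)=p$, this is exactly the biorthogonality of $V$ and $\text{Ker}(C_p)$ in the sense of Definition 2.1 of \cite{Rao8}.

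The main obstacle is the step $V\subseteq\text{Ker}(D^T)$, namely turning ``control-independent'' into the exact identity $D^TE_r=0$. This is where the regularity of Theorem \ref{Smooth} is indispensable, since the argument rests on the strict gain $2\alpha-1>0$ of the boundary trace over the $L^2$ control; the delicate point to be made rigorous is that, within the admissible loop-controls $G$, the projection $(D^TE_r,G)$ genuinely realizes functions outside $H^{2\alpha-1}(\Sigma)$ whenever $D^TE_r\neq0$, which is in turn where the exact controllability of the reduced system (via Theorem \ref{controllable}) is used.
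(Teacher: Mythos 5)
Your overall strategy matches the paper's (vary the control with fixed data, use exact controllability of the reduced system \eqref{c99}, and exploit the regularity gain $2\alpha-1>0$ of Theorem \ref{Smooth} to kill $D^TE_r$), but two of your steps have genuine gaps. The first is the step you yourself call the main obstacle, $V \subseteq \text{Ker}(D^T)$: it is not actually proved. Everything hinges on the claim that, when $D^TE_r \neq 0$, the projections $(D^TE_r, G)$, as $G$ runs over differences of synchronizing controls, realize functions outside $H^{2\alpha-1}(\Sigma)$ --- and you explicitly leave this ``to be made rigorous.'' Note that your (faithful) reading of the hypothesis, which only admits synchronizing controls, makes this genuinely harder: your family $\mathcal{G}$ of admissible differences is constrained by $C_pY \equiv 0$ for $t\geq T$, so its richness must itself be established, e.g.\ by showing that every $L^2$ control on $(0,T_1)\times\Gamma$, with $T-T_1$ at least the controllability time of the reduced system, extends to an element of $\mathcal{G}$ (steer $C_pY$ back to rest on $(T_1,T)$, switch off afterwards), and then exhibiting one whose projection is not in $H^{2\alpha-1}$. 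The paper avoids the richness question altogether: with zero initial data it works with an \emph{arbitrary} control $\widehat{H}$, derives the identity $(E_r,B\widehat{U})=(E_r,D\widehat{H})$ on $\Sigma$ together with the estimate \eqref{25.10}, and then substitutes the explicit one-parameter family $\widehat{H}=D^TE_r\, v$ with $v$ an arbitrary scalar $L^2$ function; if $D^TE_r\neq 0$ this yields $\|v\|_{H^{2\alpha-1}(\Sigma)}\leq c\|v\|_{L^2(0,T;L^2(\Gamma))}$ for all $v$, contradicting the compactness of the embedding $H^{2\alpha-1}(\Sigma)\hookrightarrow L^2(\Sigma)$. Some such explicit test family or quantitative argument is needed; as written, your proposal asserts the conclusion rather than proving it.

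The second gap is the $B^T$-invariance. Your subspace $\mathcal{W}$ is defined by control-independence of $(w,U)$ \emph{in} $\Omega$, but once $D^TE_r=0$ the boundary condition only gives independence of $(B^TE_r,U)$ restricted to $\Gamma$; that does not place $B^TE_r$ in $\mathcal{W}$, so the dimension count $\mathcal{W}=V$ cannot be applied to it. (Your treatment of $A^TE_r$ is fine, since the interior equation lives in $\Omega$.) The repair is essentially what the paper does: once biorthogonality is known, decompose $B^TE_r=\sum_{s=1}^p\beta_{rs}E_s+C_p^TQ_r$; on $\Sigma$ the $E_s$-terms vanish (interior independence plus continuity of the trace), so $(Q_r,C_pY)|_{\Sigma}\equiv 0$; then exact controllability of the reduced system lets one place an arbitrary reduced state at an intermediate time $t_0<T$, whose boundary trace can be chosen non-orthogonal to $Q_r$, forcing $Q_r=0$. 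The same intermediate-time device is also needed in your pointwise-spanning claims for $C_pY$ (used both in the dimension count and in your biorthogonality argument), since differences of synchronizing solutions necessarily satisfy $C_pY(T)=0$, so the value at $t=T$ itself can never be arbitrary.
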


\begin{proof}
 Let  $(\widehat{U}_0,\widehat{U}_1)=(0,0)$.  By Theorem \ref{Smooth}, the  linear mapping
$$F:\quad H \rightarrow    (U, U') $$
is continuous from $L^2   (0, T;  (L^2 (\Gamma))^M)$ to $ C^0   ([0, T];     (H^{\alpha}   (\Omega))^N \times (H^{\alpha-1}   (\Omega))^N)$, where  $\alpha$ is defined by   \eqref{alfarobin}.
Let $F'$ denote  the Fr\'echet derivative of  the application $F$.  For any given $\widehat H\in L^2   (0, T; (L^2 (\Gamma))^M)$,  we define
\begin{align}
\widehat {U}=F'   (0)\widehat H.
\end{align} By   linearity, $\widehat {U}$ satisfies a system similar  to that of $U$:
\begin{align} \label{25.6}\begin{cases} \widehat {U} ''-{\Delta} \widehat {U} +A\widehat {U} =0\quad &  \hbox{in }        (0,+\infty) \times \Omega,\\
\partial_\nu \widehat {U}  + B\widehat {U}  = D \widehat H& \hbox{on }     (0,+\infty) \times \Gamma\\
t=0: \quad  \widehat {U}= \widehat {U}'=0  & \hbox{in } \Omega.
\end{cases}
\end{align}
Since the projection functions $ \phi_r=   (E_r, U)~   (r=1,\cdots, p)$ are independent of  applied boundary controls $H$,
we have
\begin{align}\label{25.7}
   (E_r,\widehat {U}) \equiv 0, \quad  \forall~ \widehat H \in L^2   (0, T;   (L^2 (\Gamma))^M),\quad r=1,\cdots, p.
\end{align}

First, we prove that $E_r \not \in \hbox{Im}   (C_p^T)$ for $r=1,\cdots, p$. Otherwise, there exist an $\bar r$ and a vector  $R_{\bar r} \in\rr^{N-p}$, such that $E_{\bar r}=C_p^TR_{\bar r} $, then we have
\begin{align}\label{25.8}
0=    (E_{\bar r},\widehat {U})=     (R_{\bar r}, C_p\widehat {U}),  \quad  \forall \widehat H \in L^2   (0, T;   (L^2   (\Gamma))^M).
\end{align}
Since $C_p\widehat {U}$ is the solution to the corresponding reduced problem \eqref{c99}--\eqref{c99ini}, noting the equivalence between  the exact boundary synchronization  by $p$-groups for the original system and the exact boundary controllability  for the reduced system,  from the exact boundary synchronization by $p$-groups for system  \eqref{B}, we know that the reduced system \eqref{c99} is exactly  controllable, then the value of $C_p\widehat {U}$ at the time $T$ can be chosen arbitrarily, thus we get
$R_{\bar r}=0,$
 which contradicts $E_{\bar r}\not=0$. Then, we have $E_r \not\in  \hbox{Im}   (C_p^T)~   (r=1,\cdots, p) $.
Thus $V \cap \{\text{Ker}   (C_p)\}^{\perp}=V \cap \text{Im}   (C_p^T)=\{0\}$. Hence by Lemma 4.2 and Lemma 4.3 in \cite{Luthese}, $V$ is bi-orthonormal to Ker$   (C_p)$, and
then  $   (V, C_p^T
)$ constitutes   a set of basis in $\rr^N$. Therefore, there exist constant coefficients $\alpha_{rs}~   (r,s=1,\cdots, p)$ and vectors $P_r \in \rr^{N-p}~   (r=1,\cdots, p)$, such that
\begin{equation}\displaystyle A^TE_r =\sum_{s=1}^p \alpha_{rs} E_s + C_p^TP_r, \quad r=1,\cdots, p.\end{equation}
Taking the inner product with $E_r$ on both sides of the equations in \eqref{25.6} and noting \eqref{25.7},  we get
\begin{equation}
0=   (A\widehat {U},E_r) =   (\widehat {U},A^TE_r) =   (\widehat {U},C_p^TP_r)=   (C_p\widehat {U},P_r)\end{equation}
for  $r=1,\cdots, p.$ Similarly, by the exact boundary controllability for the reduced system \eqref{c99}, we get $P_r=0~   (r=1,\cdots, p)$, thus we have
$$\displaystyle  A^TE_r=  \sum_{s=1}^p \alpha_{rs} E_s, \quad r=1,\cdots, p, $$
which means  that $V$ is an invariant subspace of $A^T$.

On the other hand, noting \eqref{25.7} and  taking the inner product with $E_r$ on both sides of the boundary condition on $\Gamma$ in \eqref{25.6}, we get
\begin{equation}\label{25.9}
   (E_r, B\widehat {U})=   (E_r, D\widehat H) \quad  \hbox{on }  \Gamma, \quad r=1,\cdots, p.\end{equation}
By Theorem \ref{Smooth}, for $  r=1,\cdots, p$ we have
\begin{align}\label{25.10}
&\|   (E_r, D\widehat  H)\|_{H^{2\alpha-1}   (\Sigma)} \\ =&\|   (E_r, B\widehat {U})\|_{H^{2\alpha-1}   (\Sigma)} \leqslant  c\|\widehat  H\|_{L^2   (0, T;   (L^2   (\Gamma))^M)},\notag\end{align} where $\alpha$ is given by  \eqref{alfarobin}.

We claim that $D^TE_r=0$ for $r=1,\cdots, p$. Otherwise, for $r=1,\cdots, p$, setting $\widehat  H=D^TE_rv$, it follows from \eqref{25.10} that
\begin{equation}
 \|v\|_{H^{2\alpha-1}   (\Sigma)}  \leqslant  c\|v\|_{L^2   (0, T;L^2   (\Gamma))}.\end{equation}
Since $2\alpha-1>0$, it contradicts the compactness of $H^{2\alpha-1}   (\Sigma) \hookrightarrow L^2   (\Sigma)$.
Thus, by \eqref{25.9} we have
\begin{equation}\label{25.11}
   (E_r, B\widehat {U})=0 \quad \hbox {on }    (0,T) \times \Gamma, \quad r=1,\cdots, p.\end{equation}

Similarly, there exist constants $\beta_{rs}~   (r,s=1,\cdots, p)$ and vectors $Q_r \in \rr^{N-p}~   (r=1,\cdots, p)$, such that
\begin{equation}\displaystyle B^TE_r = \sum_{s=1}^p \beta_{rs}E_s + C_p^TQ_r, \quad r=1,\cdots, p.\end{equation}
Substituting it into \eqref{25.11} and noting \eqref{25.7}, we have
\begin{equation}\displaystyle    \sum_{s=1}^p \beta_{rs}(E_s,\widehat {U}) +   ( C_p^TQ_r,\widehat {U})=    ( Q_r,C_p\widehat {U})=0, \quad r=1,\cdots, p.\end{equation}
By the exact boundary controllability for the reduced system \eqref{c99} , we get $Q_r=0~   (r=1,\cdots, p)$, then we have
\begin{equation}B^TE_r= \sum_{s=1}^p \beta_{rs}E_s, \quad r=1,\cdots, p,\end{equation}
which indicates that $V$ is also an invariant subspace of  $B^T$. The proof is complete.
\end{proof}

\begin{rem}
 When $\Omega \subset \mathbb{R}^n$ is a  parallelepiped, Theorem \ref{antistate/g} is still valid with the same proof.
\end{rem}

When $A$ and $B$ do not satisfy all the conditions mentioned in Theorem \ref{state/g},  exactly synchronizable states by $p$-groups may depend on  applied boundary controls. We have the following

\begin{thm} Let $\Omega \subset \mathbb{R}^n$ be a smooth bounded domain.  Assume that both  $A$ and $B$ satisfy the conditions of $C_p$-compatibility \eqref{cpcom}. 
 Then there exists a boundary control matrix $D$  such that system \eqref{B} is exactly   synchronizable by $p$-groups, and each exactly synchronizable state by $p$-groups $u=(u_1,\cdots, u_p)^T$ satisfies the following estimate:
\begin{equation}\label{estimate2}\|(u, u')(T)-(\phi, \phi')(T)\|_{(H^{\alpha+1}(\Omega))^{p} \times (H^{\alpha}(\Omega))^{p}}\leq
c\|C_p(\widehat{U}_0, \widehat{U}_1)\|_{(\mathcal H_1)^{N-p} \times (\mathcal H_0)^{N-p}},\end{equation}
where $\alpha$ is defined by the first formula of  \eqref{alfarobin}, $c $ is a positive constant and $\phi =(\phi_1,\cdots, \phi_p)^T$  is the solution to the following problem $(1 \leq r \leq p)$:
\begin{equation}\left\lbrace
\begin{array}{lll}
\displaystyle \phi_r'' -\Delta \phi_r +  \sum_{s=1}^p\alpha_{rs} \phi_s=0\qquad &\hbox {in}\quad (0,+\infty) \times \Omega, \\
\displaystyle \partial_\nu\phi_r +\sum_{s=1}^p\beta_{rs} \phi_s=0\hskip1.5cm & \hbox{on}\quad (0,+\infty) \times \Gamma,\\
t=0:\qquad \phi_r= (E_r, \widehat{U}_0),\quad \phi_r'=(E_r, \widehat{U}_1)  \qquad &\hbox {in}\quad \Omega, \end{array}
\right.
\end{equation}  in which
\begin{equation} \label{E} Ae_r= \sum_{s=1}^p\alpha_{sr}e_s,\quad Be_r= \sum_{s=1}^p\beta_{sr}e_s, 
\quad  r=1,\cdots, p.\end{equation}
\end{thm}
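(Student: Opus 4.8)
The plan is to realise the synchronization through the reduced system, and then to measure the synchronizable state against the reference solution $\phi$ by projecting $U$ onto a basis biorthogonal to $\operatorname{Ker}(C_p)$. Since $A,B$ satisfy \eqref{cpcom}, Theorem \ref{psugroup} provides a matrix $D$ with $\operatorname{Ker}(D^T)=\operatorname{Ker}(C_p)=\operatorname{Span}\{e_1,\dots,e_p\}$ and $\operatorname{rank}(C_pD)=\operatorname{rank}(D)=N-p$, for which \eqref{B} is exactly synchronizable by $p$-groups; let $U$ be a solution realising the synchronization under a control $H$, and set $W:=C_pU$. By \eqref{8.6}, $W\equiv0$ for $t\ge T$, so in particular $W(T)=W'(T)=0$; moreover $W$ solves the reduced problem \eqref{c99}--\eqref{c99ini}, so Theorem \ref{Smooth} applied to that system gives $W\in C^0([0,T];(H^\alpha(\Omega))^{N-p})$ and $W|_\Sigma\in(H^{2\alpha-1}(\Sigma))^{N-p}$, with all norms bounded, through the control estimate \eqref{con22}, by $\|C_p(\widehat U_0,\widehat U_1)\|_{(\mathcal H_1)^{N-p}\times(\mathcal H_0)^{N-p}}$, which is the right-hand side of \eqref{estimate2}.

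Next I would set $E_r:=e_r/\|e_r\|^2$, so that $(e_s,E_r)=\delta_{sr}$ and $E_r\in\operatorname{Ker}(C_p)=\operatorname{Ker}(D^T)$, and put $\Phi_r:=(E_r,U)$. Splitting $\mathbb R^N=\operatorname{Ker}(C_p)\oplus\operatorname{Im}(C_p^T)$ orthogonally and using \eqref{E}, one writes $A^TE_r=\sum_s\alpha_{rs}E_s+C_p^TP_r$ and $B^TE_r=\sum_s\beta_{rs}E_s+C_p^TQ_r$ with $P_r,Q_r\in\mathbb R^{N-p}$. Projecting \eqref{B} onto $E_r$ and using $E_r\in\operatorname{Ker}(D^T)$ to annihilate the control $DH$, the functions $\Phi_r$ solve exactly the reference system with the coefficients $\alpha_{rs},\beta_{rs}$, but with the interior source $-(P_r,W)$ and the boundary source $-(Q_r,W)|_\Sigma$, and with initial data $(E_r,\widehat U_0),(E_r,\widehat U_1)$. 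Because $U=\sum_s u_se_s$ for $t\ge T$, biorthogonality yields $\Phi_r(T)=u_r(T)$ and $\Phi_r'(T)=u_r'(T)$; hence the difference $w:=\Phi-\phi$ starts from zero, is driven solely by $W=C_pU$, and satisfies $(u,u')(T)-(\phi,\phi')(T)=(w,w')(T)$. The theorem is thereby reduced to a regularity estimate for $(w,w')(T)$.

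Finally I would split $w=w^{(1)}+w^{(2)}$ into the responses to the interior and boundary sources. For $w^{(1)}$, driven by $-(P_r,W)\in C^0([0,T];H^\alpha(\Omega))$, the interior-force estimate of Lemma \ref{Smoothneu2}, promoted by one Sobolev order (through the fractional powers of the Robin operator, whose domains match the Sobolev scale on a smooth domain), gives $(w^{(1)},w^{(1)}{}')\in C^0([0,T];H^{\alpha+1}(\Omega)\times H^\alpha(\Omega))$, exactly the target space. The delicate part is $w^{(2)}$, whose boundary datum $-(Q_r,W)|_\Sigma$ lies only in $H^{2\alpha-1}(\Sigma)$, well below what a naive trace count needs to reach $H^{\alpha+1}$; here I would convert the Robin condition to a Neumann one by the multiplier $h$ with $\nabla h=\nu$ on $\Gamma$ as in the proof of Theorem \ref{Smooth}, invoke the sharp trace regularity of Lemma \ref{Smoothneu}, and crucially exploit that the source is the trace of the interior wave trajectory $W$ with $W(T)=W'(T)=0$, integrating by parts in time so as to trade the scarce time-regularity of the boundary datum against the smoothing of the boundary-input map. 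Summing the two contributions, evaluating at $t=T$ and using the bounds of the first paragraph, would yield \eqref{estimate2}. I expect this control of $w^{(2)}$ to be the main obstacle, precisely because the Robin/Neumann problem lacks the hidden regularity of the Dirichlet case and one is forced onto the optimal Lasiecka--Triggiani estimates.
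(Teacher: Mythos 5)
Your setup through the first two paragraphs is sound and parallels the paper: existence of a synchronizing $D$, projection of $U$ onto a basis biorthogonal to $\text{Ker}(C_p)$, matching of the coefficients $\alpha_{rs},\beta_{rs}$ with \eqref{E}, identification $\Phi_r(T)=u_r(T)$, and control of $W=C_pU$ through \eqref{con22} and Theorem \ref{Smooth}. The genuine gap is exactly where you flag it, and it is fatal to your particular choices: taking $E_r=e_r/\|e_r\|^2$ and $\text{Ker}(D^T)=\text{Ker}(C_p)$ leaves the boundary source $-(Q_r,W)|_\Sigma$, which lies only in $H^{2\alpha-1}(\Sigma)$ (with $\alpha=3/5-\epsilon$ this is barely $H^{1/5}(\Sigma)$), while the target space is $H^{\alpha+1}(\Omega)\times H^{\alpha}(\Omega)$. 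The Neumann/Robin boundary-input map gains only about $\alpha$ interior derivatives over the boundary datum, so even the sharp Lasiecka--Triggiani estimates plus interpolation put $w^{(2)}$ at best near $H^{3\alpha-1}(\Omega)$, a deficit of nearly a full derivative relative to $H^{\alpha+1}(\Omega)$. Your proposed rescue is not an argument: integration by parts in the Duhamel/cosine-operator representation requires essentially one full time-derivative of the boundary datum, far more than the $H^{2\alpha-1}$ time-regularity available, and the vanishing $W(T)=W'(T)=0$ contributes no interior smoothing of the trace. So the estimate for $w^{(2)}$ cannot be closed along these lines.

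The paper's proof avoids the obstruction by making the boundary source vanish identically, at the price of choosing $V$ and $D$ differently (the theorem only asserts existence of \emph{some} $D$, so this freedom is available and is precisely what must be exploited). Write $B=P^{-1}\Lambda P$ with $\Lambda$ symmetric and set $E_r=P^TPe_r$; then $B^TE_r=P^TPBe_r\in P^TP\,\text{Ker}(C_p)=V$, i.e.\ $V=\text{Span}\{E_1,\dots,E_p\}$ is invariant under $B^T$ (this uses the $C_p$-compatibility of $B$), and one checks $V^\perp\cap\text{Ker}(C_p)=\{0\}$, so after a change of basis in $V$ one may assume $(E_r,e_s)=\delta_{rs}$. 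Now define $D$ by $\text{Ker}(D^T)=V$ rather than $\text{Ker}(C_p)$: the control term $(E_r,DH)$ is still annihilated since $E_r\in\text{Ker}(D^T)$, Theorem \ref{psugroup} still applies since $\text{rank}(C_pD)=N-p$, and --- the key point --- your $Q_r$ is now zero, so the projections $\psi_r=(E_r,U)$ satisfy \emph{homogeneous} Robin conditions with coefficients $\beta_{rs}$. Only the interior source $-(R_r,C_pU)$ survives (coming from $A^T$, which need not leave $V$ invariant), and since $C_pU\in C^0([0,T];(H^{\alpha}(\Omega))^{N-p})$ with norm controlled by \eqref{con1} and Theorem \ref{Smooth}, classical semigroup theory gives \eqref{estimate2} directly; your treatment of $w^{(1)}$ is the same in spirit. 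The missing idea, in short, is the $B^T$-invariant choice of $V$ coupled with $\text{Ker}(D^T)=V$, which eliminates $w^{(2)}$ altogether instead of trying to estimate it.
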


\begin{proof}
We first show that there  exists   a subspace $V$,  which is invariant for $B^T$ and bi-orthonormal to Ker$   (C_p)$.

Let $ B=P^{-1}\Lambda P,$ where $P$ is an invertible matrix, and $\Lambda$ be a symmetric matrix. Let $V=\hbox{Span}\{E_1,\cdots, E_p\}$ in which
\begin{equation} E_r= P^{T}Pe_r,\qquad r=1,\cdots, p.\end{equation}
Noting \eqref{kerc1}  and the fact that $\hbox{Ker}(C_p)$ is an invariant subspace of $B$, we get \begin{equation}B^TE_r  =  P^{T}PBe_r  \subseteq P^{T}P\hbox{Ker}(C_p) \subseteq V\qquad r=1,\cdots, p,\end{equation} then
$V$ is invariant for $B^T$.

We next show that $V^\perp\cap \text{Ker}(C_p)= \{0\}$. Then, noting that dim$   (V)$ = dim $  \text{Ker}(C_p) = p$, by Lemma 4.2 and Lemma 4.3 in \cite{Luthese}, $ V$ is bi-orthonormal to $\text{Ker}  (C_p)$.
For this purpose,  let $a_1,\cdots, a_p$ be  coefficients such that
\begin{equation} \sum_{r=1}^pa_re_r\in V^\perp.\end{equation}
Then
\begin{equation}    ( \sum_{r=1}^pa_re_r, E_s) =    ( \sum_{r=1}^pa_rPe_r, Pe_s) =0, \quad s=1, \cdots, p .\end{equation}
It follows that
\begin{equation}    ( \sum_{r=1}^pa_rPe_r,  \sum_{s=1}^pa_sPe_s )=0,\end{equation}
then $a_1=\cdots= a_p =0$, namely,  $V^\perp\cap \text{Ker}(C_p)= \{0\}$.

Denoting
\begin{align} Be_r= \sum_{s=1}^p \beta_{sr}e_s,  \quad  r=1, \cdots, p,\end{align}
a direct calculation yields that
 \begin{align}\label{25.14}  B^TE_r= \sum_{s=1}^p \beta_{rs}E_s, \quad  r=1, \cdots, p. \end{align}

Define the boundary control matrix $D$ by
\begin{equation}\text{Ker}(D^T) = V.\label{25.15c}\end{equation}
Noting \eqref{kerc1}, we have
\begin{align}&\hbox{Ker}   (C_p)\cap\hbox{Im}   (D) \\=&\hbox{Ker}   (C_p)\cap\{\hbox{Ker}   (D^T)\}^\perp =  \hbox{Ker}(C_p)\cap V^\perp = \{0\},\notag\end{align}
then, by Lemma 2.2 in \cite{Rao8}, we have \begin{equation}\hbox{rank}   (C_pD)=\hbox{rank}(D)=M =N-p.\end{equation}
Therefore,  by Theorem \ref{psugroup}, system \eqref{B} is exactly   synchronizable by $p$-groups. Let $U$ be the solution to problem \eqref{B}--\eqref{Bin}, which realizes the exact boundary synchronization  by $p$-groups at time $T$ under such $D$ and boundary control $H$.

Denoting  $\psi_r =    (E_r, U)    (r=1, \cdots, p)$, we have
\begin{align}
    &  (E_r, AU) =     (A^TE_r, U)\\= &      ( \sum_{s=1}^p  \alpha_{rs} E_s + A^T E_r -\sum_{s=1}^p \alpha_{rs} E_s, U)  \notag \\
=& \sum_{s=1}^p \alpha_{rs}    (E_s, U)+    ( A^T E_r -  \sum_{s=1}^p \alpha_{rs} E_s, U)\notag \\
=& \sum_{s=1}^p \alpha_{rs}\psi_s+    ( A^T E_r -  \sum_{s=1}^p \alpha_{rs} E_s, U).\notag
 \end{align}
By the assumption that $V$ is bi-orthonormal to $\hbox{Ker}   (C_p)$, without loss of generality, we may assume that
\begin{align}   (E_r, e_s)=\delta_{rs} \quad    (r, s=1,\cdots, p). \label{25.15}\end{align}
Then,  for any given $k = 1,  \cdots, p$, by  the first formula of  \eqref{E},  we get
\begin{align}
&   ( A^T E_r -  \sum_{s=1}^p \alpha_{rs} E_s, e_k)=   (E_r, Ae_k)- \sum_{s =1}^p \alpha_{rs}   (E_s, e_k)\notag\\ =&\sum_{s=1}^p \alpha_{sk}   (E_r,e_s)-\alpha_{rk}
= \alpha_{rk}-\alpha_{rk}=0,\notag
 \end{align}
hence
\begin{equation}A^T E_r -  \sum_{s=1}^p \alpha_{rs} E_s \in \{\text{Ker}(C_p)\}^\bot =\text{Im}   (C_p^T), \quad r=1, \cdots, p.\label{25.15b}\end{equation}
Thus, there exist $R_r \in \mathbb{R}^{N-p}    (r=1, \cdots, p) $, such that
\begin{align}
A^T E_r -  \sum_{s=1}^p \alpha_{rs} E_s=C_p^TR_r, \quad r=1, \cdots, p.
 \end{align}

Taking the inner product on both sides of  problem \eqref{B}--\eqref{Bin}   with $E_r$, and noting \eqref{25.14}--\eqref{25.15c}, for $r=1, \cdots, p$ we have
\begin{equation}\left\lbrace
\begin{array}{lll}
\displaystyle \psi_r'' -\Delta \psi_r +  \sum_{s=1}^p\alpha_{rs} \psi_s=-(R_r, C_pU) & \hbox {in }     (0,+\infty) \times \Omega, \\
\displaystyle \partial_\nu\psi_r +\sum_{s=1}^p\beta_{rs} \psi_s=0 & \hbox{on}     (0,+\infty) \times \Gamma,\\
t=0: \quad  \psi_r=    (E_r, \widehat {U}_0),\  \psi_r'=   (E_r, \widehat {U}_1) & \hbox {in } \Omega.\end{array}
\right.
\end{equation}
Then, by the classic semigroups theory, we have
\begin{align}\label{25.16}&\|   (\psi, \psi')   (T)-(\phi, \phi')   (T)\|_{   (H^{\alpha+1}   (\Omega))^{p} \times    (H^{\alpha}   (\Omega))^{p}} \leqslant  c_1\| (R_r, C_pU)\|_{  L^2(0,T;H^{\alpha}   (\Omega) ) }\\\leqslant & c_2\|C_p   (\widehat {U}_0, \widehat {U}_1)\|_{   (H^1   (\Omega))^{N-p} \times    (L^2   (\Omega))^{N-p}},\notag\end{align}
where $c_i$ for $ i=1,2 $ are different positive constants,   $\alpha$ is given by the first formula of \eqref{alfarobin}, and the second inequality follows from  \eqref{con1} and Theorem \ref{sm1}  since $C_pU$ is the solution to the reduced problem \eqref{c99}--\eqref{c99ini}.

On the other hand, noting \eqref{25.15}, it is easy to see that\begin{equation}t\geqslant  T:\quad \psi_r =    (E_r, U) = \sum_{s=1}^p   (E_r, e_s)u_s= u_r, \quad r=1, \cdots, p.\end{equation}
Substituting it into \eqref{25.16}, we get \eqref{estimate2}.
\end{proof}


Acknowledgement: The authors would like to thank the reviewers for their valuable and helpful suggestions.

\end{document}